\def\LT@start{%
\let\LT@start\endgraf
\endgraf\penalty\z@\vskip\LTpre
\dimen@\pagetotal
\advance\dimen@ \ht\ifvoid\LT@firsthead\LT@head\else\LT@firsthead\fi
\advance\dimen@ \dp\ifvoid\LT@firsthead\LT@head\else\LT@firsthead\fi
\advance\dimen@ \ht\LT@foot
\dimen@ii\vfuzz
\vfuzz\maxdimen
\setbox\tw@\copy\z@
\setbox\tw@\vsplit\tw@ to \ht\@arstrutbox
\setbox\tw@\vbox{\unvbox\tw@}%
\vfuzz\dimen@ii
\advance\dimen@ \ht
\ifdim\ht\@arstrutbox>\ht\tw@\@arstrutbox\else\tw@\fi
\advance\dimen@\dp
\ifdim\dp\@arstrutbox>\dp\tw@\@arstrutbox\else\tw@\fi
\advance\dimen@ -\pagegoal
\ifdim \dimen@>\z@\unskip\vfil\break\fi
\global\@colroom\@colht
\ifvoid\LT@foot\else
\advance\vsize-\ht\LT@foot
\global\advance\@colroom-\ht\LT@foot
\dimen@\pagegoal\advance\dimen@-\ht\LT@foot\pagegoal\dimen@
\maxdepth\z@
\fi
\ifvoid\LT@firsthead\copy\LT@head\else\box\LT@firsthead\fi
\output{\LT@output}}
\definecolor{dblue}{rgb}{0,0,0.70}
\newtheorem{theorem}{Theorem}[section]	
\newtheorem*{theorem*}{Theorem}
\newaliascnt{lemma}{theorem}
\newtheorem{lemma}[lemma]{Lemma}
\newtheorem*{lemma*}{Lemma}
\newaliascnt{proposition}{theorem}
\newtheorem{proposition}[proposition]{Proposition}
\newaliascnt{corollary}{theorem}
\newtheorem{corollary}[corollary]{Corollary}
\theoremstyle{remark}
\newaliascnt{remark}{theorem}
\newtheorem{remark}[remark]{Remark}
\newaliascnt{question}{theorem}
\newtheorem*{question*}{Question}
\newaliascnt{definition}{theorem}
\newtheorem{definition}[definition]{Definition}
\newaliascnt{example}{theorem}
\newaliascnt{convention}{theorem}
\renewcommand{\phi}{\varphi}
\newcommand{\KP}{\mathsf{KP}}
\newcommand{\ZFC}{\mathsf{ZFC}}
\newcommand{\height}{\mathsf{height}}
\newcommand{\RFN}{\mathsf{RFN}}
\renewcommand{\Pr}{\mathsf{Pr}}
\newcommand{\Ord}{\mathsf{Ord}}
\newcommand{\Comp}{\operatorname{\mathsf{Comp}}}
\newcommand{\Tr}{\mathrm{Tr}}
\newcommand{\Exists}{\operatorname{\mathsf{Exists}}}
\newcommand{\bfalpha}{{\boldsymbol{\alpha}}}
\newcommand{\bfbeta}{{\boldsymbol{\beta}}}
\newcommand{\bfgamma}{{\boldsymbol{\gamma}}}
\newcommand{\bfdelta}{{\boldsymbol{\delta}}}
\newcommand{\barrepr}{{\text{-}\mathsf{repr}}}
\newcommand{\TC}{\mathsf{TC}}
\title{Generalized ordinal analysis and reflection principles in set theory}
\author{Hanul Jeon}
\author{James Walsh}
\thanks{Thanks to Juan Pablo Aguilera, Wolfram Pohlers, and Shervin Sorouri for helpful discussions.}
\address{Department of Mathematics, Cornell University}
\email{hj344@cornell.edu}
\address{Department of Philosophy, New York University}
\email{jmw534@nyu.edu}
\begin{document}

\maketitle

\begin{abstract}
It is widely claimed that the natural axiom systems---including the large cardinal axioms---form a well-ordered hierarchy. Yet, as is well-known, it is possible to exhibit non-linearity and ill-foundedness by means of \emph{ad hoc} constructions. In this paper we formulate notions of proof-theoretic strength based on set-theoretic reflection principles. We prove that they coincide with orderings on theories given by the generalized ordinal analysis of Pohlers. Accordingly, these notions of proof-theoretic strength engender genuinely well-ordered hierarchies. The reflection principles considered in this paper are formulated relative to G\"{o}del's constructible universe; we conclude with generalizations to other inner models.
\end{abstract}

\section{Introduction}

One of the central problems in the foundations of mathematics is the apparent well-ordering of natural theories by various measures of proof-theoretic strength, such as consistency strength. Taken in full generality, these orderings are neither linear nor well-founded. Yet, if one restricts one's attention to sufficiently ``natural'' theories, neither non-linearity nor ill-foundedness arise.\footnote{At least, this seems to be the majority opinion; see, for instance, discussions in \cite{koellner2010independence, montalban2019martin, shelah2003logical, steel2014godel}. There have been dissenting voices, however; see  \cite{hamkins2022nonlinearity, hauser2014strong}.} Of particular interest is the apparent well-ordering of large cardinal axioms. Koellner writes that ``large cardinal axioms \dots lie at the heart of the remarkable empirical fact that natural theories from completely distinct domains can be compared'' \cite{koellner2010independence}. What explains the apparent well-ordering of the natural mathematical theories such as the large cardinal axioms? The non-mathematical quantification over ``natural'' theories makes it difficult to approach this question mathematically. Though Shelah says that explaining ``the phenomena of linearity of consistency strength (and of large cardinal properties)'' is a ``logical dream,'' he writes that ``having an answer may well be disputable; it may have no solution or several'' (\cite{shelah2003logical} pp. 215--216).

Steel has emphasized that---for natural theories---the consistency strength ordering coincides with the ordering given by the inclusion relation on theorems from various pointclasses. He writes:
\begin{displayquote}[Steel \cite{steel2014godel} p. 159]
If $T$ and $U$ are natural extensions of $\mathsf{ZFC}$, then 
\begin{equation*}
    T \leq_\mathsf{Con}U \Leftrightarrow (\Pi^0_1)_T \subseteq (\Pi^0_1)_U \Leftrightarrow (\Pi^0_\omega)_T \subseteq (\Pi^0_\omega)_U.
\end{equation*}
Thus the well-ordering of natural consistency strengths corresponds to a well-ordering by the inclusion of theories of the natural numbers. There is no divergence at the arithmetic level, if one climbs the consistency strength hierarchy in any natural way we know of\dots. Natural ways of climbing the consistency strength hierarchy do not diverge in their consequences for the reals\dots. Let $T, U$ be natural theories of consistency strength at least that of ``there are infinitely many Woodin cardinals''; then either $(\Pi^1_\omega)_T\subseteq(\Pi^1_\omega)_U$ or $(\Pi^1_\omega)_U\subseteq (\Pi^1_\omega)_T$.
\end{displayquote}

In previous work of the second-named author \cite{walsh2023characterizations}, analogues of consistency strength based on $\Pi^1_1$-reflection were introduced that genuinely well-order large classes of theories. These are proven to coincide with inclusion relations on theorems from suitably chosen pointclasses. This provides an insight into Steel's remarks that removes the non-mathematical quantification over ``natural'' theories. That work took place in the setting of second-order arithmetic, which limits the applicability and generality of those results. The main goal in this paper is to explore the proof-theoretic well-ordering phenomenon in set theory, the context wherein large cardinal axioms arise. Hence, we introduce and study proof-theoretic orderings that exhibit genuine linearity and genuine well-foundedness. In particular, we formulate set-theoretic reflection principles, calibrate their strength, and show that they well-order vast swathes of theories.  

Let's introduce a few important definitions before continuing.

\begin{definition}
    The formula $F^z$ is obtained from $F$ by restricting all unrestricted quantifiers in $F$ to $z$, i.e., replacing $\forall x$ by $\forall x\in z$ and $\exists x$ by $\exists x\in z$.
\end{definition}
It follows that  $F^z$ is always a $\Delta_0$ formula. 

We will be particularly interested in formulas whose quantifiers are restricted to $L_\alpha$. We will be proving general results about such formulas within formal systems. To prove such results, $L_\alpha$ must be definable, whence $\alpha$ must have a good representation whose properties can be established over a weak set theory like $\KP$. In this paper, we adopt the convention that $\bfalpha$ is a representation of the ordinal $\alpha$ satisfying various criteria (see Definition \ref{representation} and the discussion afterwards for details). Moreover, we work with theories that prove the $\Sigma$-recursion principle so that, given a representation $\bfalpha$ of an ordinal, we may define $L_\bfalpha$ within the theory.

\begin{definition}
The formula $G$ is $\Pi^\bfalpha_n$ ($\Sigma^\bfalpha_n$) if $G$ has the form $F^{L_\bfalpha}$ for some $\Pi_n$ ($\Sigma_n$) formula $F$. 
\end{definition}
Finally, in what follows, $\mathsf{KP}_1$ is the fragment of $\mathsf{KP}$ that results from restricting set induction to $(\Sigma_1\cup\Pi_1)$ formulas.

The reflection principles we will be working with are those defined as follows:
$$\RFN_{\Sigma^\bfalpha_1}(T) := \forall \varphi\in \Delta_0 \Big(\Pr_T\big(\exists x \in L_\bfalpha \; \varphi(x)\big)\to L_\bfalpha\models_{\Sigma_1} \exists x \varphi(x)\Big).$$
Note that this carves out a whole class of reflection principles, one for each $\bfalpha$. Moreover, note that each of these reflection principles is a formal statement within the language of set theory. 

One of our main theorems is that the relation $\{(T,U) \mid U \vdash \RFN_{\Sigma^\bfalpha_1}(T)\}$ is well-founded when restricted to a broad class of theories.
\begin{theorem*}
    There is no sequence $\langle T_n\mid n<\omega\rangle$ of $\Sigma^\bfalpha_1$-sound, $\Pi^\bfalpha_1$-definable extensions of $\KP_1 + \Exists(\bfalpha)$ such that for each $n$, $T_n\vdash \RFN_{\Sigma^\bfalpha_1}(T_{n+1})$.
\end{theorem*}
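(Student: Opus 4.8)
The plan is to argue by contradiction: suppose $\langle T_n\mid n<\omega\rangle$ is such a descending sequence, so each $T_n$ is $\Sigma^\bfalpha_1$-sound and $\Pi^\bfalpha_1$-definable with $T_n\supseteq\KP_1+\Exists(\bfalpha)$ and $T_n\vdash\RFN_{\Sigma^\bfalpha_1}(T_{n+1})$. The guiding observation is that $\RFN_{\Sigma^\bfalpha_1}(T)$ is precisely a formalization of the assertion ``$T$ is $\Sigma^\bfalpha_1$-sound,'' so the chain says that each theory proves the $\Sigma^\bfalpha_1$-soundness of its successor. I would first isolate the incompleteness obstruction that forbids a single self-reflecting theory, and then reduce the existence of an infinite descending chain to the existence of such a theory.

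The engine is the following no-self-reflection lemma: no $\Sigma^\bfalpha_1$-sound, $\Pi^\bfalpha_1$-definable $T\supseteq\KP_1+\Exists(\bfalpha)$ proves $\RFN_{\Sigma^\bfalpha_1}(T)$. To see this, first note that over the base theory $\RFN_{\Sigma^\bfalpha_1}(T)$ entails the consistency of $T$: instantiating the universal quantifier over $\Delta_0$ formulas with $x\neq x$ shows that if $T$ proved the false $\Sigma^\bfalpha_1$ sentence $\exists x\in L_\bfalpha\,(x\neq x)$, then we would get $L_\bfalpha\models_{\Sigma_1}\exists x\,(x\neq x)$, which the base theory refutes; hence $T\vdash\RFN_{\Sigma^\bfalpha_1}(T)$ gives $T\vdash\neg\Pr_T(\bot)$. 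By Gödel's second incompleteness theorem---available here because $\Pi^\bfalpha_1$-definability makes $\Pr_T$ a legitimate provability predicate satisfying the derivability conditions over the base theory---this forces $T$ to be inconsistent, contradicting that $\Sigma^\bfalpha_1$-soundness already entails consistency. This disposes of the length-one case and, more importantly, pinpoints what any contradiction must ultimately produce.

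The crux is to manufacture, from an infinite descending chain, a single theory to which this lemma applies. The plan is to invoke the second recursion theorem, taking the sequence $\langle T_n\mid n<\omega\rangle$ as a parameter, to define a $\Pi^\bfalpha_1$-definable $W\supseteq\KP_1+\Exists(\bfalpha)$ that provably reflects itself, i.e.\ $W\vdash\RFN_{\Sigma^\bfalpha_1}(W)$, together with a verification that $W$ inherits $\Sigma^\bfalpha_1$-soundness from the $T_n$; this contradicts the lemma. Intuitively $W$ should behave as a ``moving target'' that on larger and larger derivations draws on the reflective strength of deeper members of the chain, so that every finite $W$-derivation is underwritten by some $T_n$ (securing soundness) while the uniformity of the construction lets $W$ prove its own reflection.

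I expect this last step to be the main obstacle, for two reasons. First, the sequence is only assumed to exist, not to be definable, so the construction must treat it genuinely as a parameter, and one must check that the resulting $W$ is still $\Pi^\bfalpha_1$-definable, extends the base theory, and---since the $T_n$ are not assumed nested or jointly consistent---is actually $\Sigma^\bfalpha_1$-sound. Second, and more delicately, one must keep tight control of logical complexity: because the hypotheses supply only $\Sigma^\bfalpha_1$-soundness rather than full soundness, and only $\Pi^\bfalpha_1$-definability, the provability predicates and the sentence $\RFN_{\Sigma^\bfalpha_1}(W)$ must be arranged so that the matched $\Sigma^\bfalpha_1$ level suffices to drive the argument---this is exactly where the precise choice of soundness and definability classes earns its keep. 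As a fallback I would keep in reserve the rank-based route, assigning each such $T$ an ordinal $o(T)\in\Ord$ via ordinal analysis and showing that $U\vdash\RFN_{\Sigma^\bfalpha_1}(T)$ forces $o(T)<o(U)$, so that well-foundedness of $\Ord$ forbids the chain; but establishing that strict decrease is itself the substantive half of the coincidence with generalized ordinal analysis.
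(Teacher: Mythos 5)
Your proposal has a genuine gap at its central step. The reduction of an infinite descending chain to a single self-reflecting theory $W$ --- which you yourself flag as ``the main obstacle'' --- is precisely the part you do not carry out, and it is where all the work would live; without an explicit construction of $W$ and a verification that it is $\Pi^\bfalpha_1$-definable and $\Sigma^\bfalpha_1$-sound, the argument does not get off the ground. Worse, the ``engine'' you intend to apply to $W$ is itself unjustified: you invoke G\"{o}del's second incompleteness theorem on the grounds that $\Pi^\bfalpha_1$-definability makes $\Pr_T$ ``a legitimate provability predicate satisfying the derivability conditions,'' but for a theory that is merely $\Pi^\bfalpha_1$-definable (rather than recursively axiomatized) the predicate $\Pr_T$ is not $\Sigma_1$, and the third derivability condition $\Pr_T(\phi)\to\Pr_T(\Pr_T(\phi))$ (formalized provable completeness) fails in general. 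The paper is explicit that its result is \emph{incomparable} with G\"{o}del's theorem for exactly this reason, and the no-self-reflection statement appears there as a \emph{corollary} of the well-foundedness theorem, not as an ingredient available in advance.

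The paper's actual proof is your fallback route, and the ``substantive half'' you defer is indeed where the content is. One shows (\autoref{Lemma: KeyLemma-RFN-comparison-WF}) that for each such $T$ there is a $\Sigma_1$-formula $\phi_T$ with $\KP_1+\Exists(\bfalpha)\vdash \RFN_{\Sigma^\bfalpha_1}(T)\to\exists\xi<\bfalpha\,\phi_T^{L_\bfalpha}(\xi)$, the least witness being $|T|_{\Sigma^\bfalpha_1}$; then $T_n\vdash\RFN_{\Sigma^\bfalpha_1}(T_{n+1})$ forces $|T_{n+1}|_{\Sigma^\bfalpha_1}<|T_n|_{\Sigma^\bfalpha_1}$ by $\Sigma_1$-upward-absoluteness, and well-foundedness of the ordinals finishes the proof. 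The key lemma in turn rests on a set-theoretic analogue of $\Sigma^1_1$-boundedness (\autoref{Lemma: Pi1 boundness for alpha}): a $\Sigma^\bfalpha_1$-definable partial ordinal-valued function whose domain covers a $\Pi^\bfalpha_1$-separated subset of a set in $L_\alpha$ has range bounded below $\alpha$, proved by a liar-style diagonalization showing that $\Sigma_1$ truth cannot be $\Pi_1$-definable. To salvage your proposal you would need either to supply this boundedness argument or to actually produce $W$; neither is routine, and the second faces the additional obstacle that the chain $\langle T_n\rangle$ is not assumed definable.
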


On the positive side, note that this theorem delivers as an immediate corollary a version of G\"{o}del's second incompleteness theorem: No $\Sigma^\bfalpha_1$-sound, $\Pi^\bfalpha_1$-definable extension $T$ of $\KP_1 + \Exists(\bfalpha)$ proves $\RFN_{\Sigma^\bfalpha_1}(T)$.\footnote{G\"{o}del's second incompleteness theorem states that no $\Pi^0_1$-sound, $\Sigma^0_1$-definable extension $T$ of elementary arithmetic proves $\RFN_{\Pi^0_1}(T)$. See \cite{Walsh2022Incompleteness} for discussion.} Though $\RFN_{\Sigma^\bfalpha_1}$ is stronger than consistency, $\Pi^\bfalpha_1$-definability is a weaker condition than recursive axiomatizability; accordingly, this result is neither stronger nor weaker than G\"{o}del's theorem but incomparable with it.

On the negative side, note that this result bears on the \emph{well-foundedness} of proof-theoretic strength but not on the \emph{linearity} of proof-theoretic strength. In previous work of the second-named author \cite{walsh2023characterizations}, a proof-theoretic ordering within second-order arithmetic is introduced that coincides with the well-ordering induced by comparing proof-theoretic ordinals; this ordering is both well-founded and linear. Since $\RFN_{\Sigma^\bfalpha_1}$ is stronger than the reflection principle considered in that work, proof-theoretic ordinals are not sufficient to calibrate the strength of $\RFN_{\Sigma^\bfalpha_1}$. 

However, Pohlers has emphasized that the hierarchy of theories given by proof-theoretic ordinals is actually the first in a hierarchy of hierarchies. In particular, the standard ordinal analysis measures the strength of theories with respect to capturing properties of $\omega_1^{\mathsf{CK}}$. Pohlers introduced the following generalizations of ordinal analysis that measure the strength of a theory with respect to capturing properties of other admissible ordinals. 
\begin{definition}
    Let $T$ be a theory containing $\mathsf{KP}_1$. For a complexity class $\Gamma$ we define:
        $$|T|_{\Gamma^\bfalpha} := \mathsf{min}\{\beta \mid \text{For all $\varphi\in\Gamma$, if }T\vdash \varphi^{L_\bfalpha} \text{ then } L_\beta \vDash \varphi \}.$$
\end{definition}

This notion generalizes the standard notion of ordinal analysis for the following reason. There is a standard translation $\star$ from the language of second-order arithmetic into the language of set theory. For an $\mathcal{L}_2$ theory $T$, $$|T^\star|_{\Sigma_1^{\omega_1^{\mathsf{CK}}}}=|T|_{\mathsf{WF}},$$ where $|T|_{\mathsf{WF}}$ is the standard proof-theoretic ordinal of $T$, i.e., the supremum of the $T$-provably well-founded primitive recursive linear orders. For details see \cite{Pohlers1998Subsystems}.

Though Pohlers introduced this generalization of ordinal analysis \cite{Pohlers1998Subsystems} and developed some applications thereof \cite{Pohlers2022Performance}, the notion has not been thoroughly studied. Our second main theorem is a demonstration of the robustness of this notion. In particular, we give an abstract characterization of the $\bfalpha$ ordinal analysis partition, which identifies theories that have the same $\Sigma^\bfalpha_1$ ordinal. We demonstrate that the $\bfalpha$ ordinal analysis partition is the unique finest partition satisfying certain natural criteria.\footnote{This result generalizes a characterization of the standard ordinal analysis partition given in \cite{walsh2023characterizations}.}

Our final main theorem connects reflection principles and generalized proof-theoretic ordinals. To establish a full correspondence between reflection principles and generalized proof-theoretic ordinals, we use the notion of provability in the presence of an oracle. 

\begin{definition}
    We define \emph{provability in the presence of an oracle for $\Pi^\bfalpha_1$-truths} $\vdash^{\Pi^\bfalpha_1}$ as follows: $T\vdash^{\Pi^\bfalpha_1} \varphi$ if there is a true $\Pi^\bfalpha_1$-formula $\psi$ such that $T+\psi\vdash\varphi$.
    We define $T\le^{\Pi^\bfalpha_1}_{\RFN_{\Sigma^\bfalpha_1}}U$ if and only if
    \begin{equation*}
        \KP_1 + \Exists(\bfalpha) \vdash^{\Pi^\bfalpha_1} \RFN_{\Sigma^\bfalpha_1}(U)\to \RFN_{\Sigma^\bfalpha_1}(T).
    \end{equation*}
\end{definition}

Our final main theorem is this:
\begin{theorem*}
    Let $T$ and $U$ be $\Delta^\bfalpha_1$-definable $\Sigma^\bfalpha_1$-sound extensions of $\KP_1 + \mathsf{Exists}(\bfalpha)$. Then we have
    \begin{equation*}
        |T|_{\Sigma^\bfalpha_1}\le |U|_{\Sigma^\bfalpha_1} \iff T\le^{\Pi^\bfalpha_1}_{\RFN_{\Sigma^\bfalpha_1}}U.
    \end{equation*}
\end{theorem*}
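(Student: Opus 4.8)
The plan is to prove the two implications separately, after recasting both sides in terms of the levels of the constructible hierarchy at which the $\Sigma^\bfalpha_1$-theorems of $T$ and $U$ become witnessed. Unwinding the definition of $|\cdot|_{\Sigma^\bfalpha_1}$, the ordinal $\beta_T := |T|_{\Sigma^\bfalpha_1}$ is the least $\beta$ such that $L_\beta$ satisfies every $\Sigma_1$ sentence $\varphi$ with $T\vdash\varphi^{L_\bfalpha}$; equivalently, $\beta_T$ is the supremum, over the $\Sigma^\bfalpha_1$-theorems $\exists x\in L_\bfalpha\,\varphi(x)$ of $T$, of the least level of $L$ containing a witness for $\varphi$. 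Since $T$ and $U$ are $\Sigma^\bfalpha_1$-sound, both $\beta_T$ and $\beta_U := |U|_{\Sigma^\bfalpha_1}$ are at most $\alpha$, and $\RFN_{\Sigma^\bfalpha_1}(T)$ and $\RFN_{\Sigma^\bfalpha_1}(U)$ are both true. The key reformulation I would record first is that $\beta_T\le\beta_U$ holds if and only if $L_{\beta_U}$ already reflects all the $\Sigma^\bfalpha_1$-theorems of $T$. The theorem then asserts that this semantic domination is captured, modulo true $\Pi^\bfalpha_1$-facts, by provability of the reflection implication.

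For the direction $\beta_T\le\beta_U \Rightarrow T\le^{\Pi^\bfalpha_1}_{\RFN_{\Sigma^\bfalpha_1}}U$, I would reason inside $\KP_1+\Exists(\bfalpha)$ augmented by a carefully chosen true $\Pi^\bfalpha_1$ oracle $\psi$, assume $\RFN_{\Sigma^\bfalpha_1}(U)$, and derive $\RFN_{\Sigma^\bfalpha_1}(T)$. The naive move---taking $\psi$ to assert ``$T$ is sound up to $L_{\beta_T}$''---is illegitimate precisely because $\beta_T$ need not possess a representation below $\bfalpha$, so this statement cannot be written as a $\Pi^\bfalpha_1$-formula; this non-representability is exactly what makes the relation nontrivial. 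Instead the plan is to use $\RFN_{\Sigma^\bfalpha_1}(U)$ to manifest the (otherwise unnameable) height $\beta_U\ge\beta_T$ inside the argument, while the oracle $\psi$ supplies only the $\Pi^\bfalpha_1$ combinatorial link between a $T$-proof $p$ of a sentence $\exists x\in L_\bfalpha\,\varphi(x)$ and a $U$-derivable sentence whose witnesses provably bound a witness for $\varphi$. Given such $p$, one uses $\RFN_{\Sigma^\bfalpha_1}(U)$ to reflect the associated $U$-sentence into $L_\bfalpha$ and then the oracle to extract from that witness a witness for $\varphi$ inside $L_\bfalpha$, yielding $\RFN_{\Sigma^\bfalpha_1}(T)$.

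For the converse, I would argue by contraposition: assuming $\beta_U<\beta_T$, I would show that no true $\Pi^\bfalpha_1$-sentence $\psi$ yields $\KP_1+\Exists(\bfalpha)+\psi\vdash\RFN_{\Sigma^\bfalpha_1}(U)\to\RFN_{\Sigma^\bfalpha_1}(T)$, by producing a single model $M$ of $\KP_1+\Exists(\bfalpha)$ that satisfies every true $\Pi^\bfalpha_1$-sentence together with $\RFN_{\Sigma^\bfalpha_1}(U)$ but falsifies $\RFN_{\Sigma^\bfalpha_1}(T)$. Since $\beta_U<\beta_T$, there is a $\Sigma^\bfalpha_1$-theorem $\exists x\in L_\bfalpha\,\varphi_0(x)$ of $T$ not witnessed in $L_{\beta_U}$. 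The model $M$ is built by a Henkin/arithmetized-completeness construction over the true $\Pi^\bfalpha_1$-theory of $L_\alpha$ together with $\RFN_{\Sigma^\bfalpha_1}(U)$, arranged so that its internal $L_\bfalpha$ is $\Pi_1$-correct and reflects $U$, yet carries a nonstandard $T$-proof of a $\Sigma^\bfalpha_1$-sentence that its $L_\bfalpha$ does not satisfy, forcing $M\models\neg\RFN_{\Sigma^\bfalpha_1}(T)$. Such an $M$ contradicts the assumed derivation, so $\beta_T\le\beta_U$.

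The main obstacle is the forward direction: designing the oracle $\psi$ so that it is genuinely $\Pi^\bfalpha_1$---the implication $\Pr_T(\cdots)\to(\cdots)$ and any reference to $U$-reflection must be kept on the correct side of the quantifier prefix, lest the prefix collapse to $\Pi^\bfalpha_2$---while still encoding enough true witness-bounding data for $\RFN_{\Sigma^\bfalpha_1}(U)$ to lift $T$'s witnesses from level $\beta_T$ up to the unnameable level $\beta_U$. A secondary difficulty is carrying out the witness-extraction and the model construction within the weak base theory $\KP_1$, using only $\Sigma$-recursion and the bookkeeping of representations, and in verifying that the model used for the converse preserves all true $\Pi^\bfalpha_1$-sentences and $\RFN_{\Sigma^\bfalpha_1}(U)$ while breaking $\RFN_{\Sigma^\bfalpha_1}(T)$.
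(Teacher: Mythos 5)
Your forward direction is, in outline, the paper's own argument: the oracle used there is the single true $\Pi^\bfalpha_1$ sentence $\theta$ asserting that every representation $\bfgamma$ with $\Pr_T(\Exists(\bfgamma)^{L_\bfalpha})$ satisfies $\Pr_U^{\Pi^\bfalpha_1}(\Exists(\bfgamma)^{L_\bfalpha})$. Its truth is exactly \autoref{Lemma: SmallerOrdinal-Tprovablyexists} applied to $|T|_{\Sigma^\bfalpha_1}\le|U|_{\Sigma^\bfalpha_1}$; its membership in $\Pi^\bfalpha_1$ is exactly where $\Delta^\bfalpha_1$-definability of $T$ and $U$ enters; and the reduction from arbitrary $\Sigma^\bfalpha_1$ theorems to the single-representation form $\Exists(\bfgamma)^{L_\bfalpha}$ (via \autoref{Lemma: alpha-Kleene} and the equivalences of \autoref{Lemma: Equivalence-RFNs}) is what makes the oracle writable at all. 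You correctly flag the quantifier-prefix issue but stop short of producing $\theta$; with the reduction to representations in hand, this half goes through essentially as you envisage.

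The converse is where there is a genuine gap. You propose to exhibit a model $M$ of $\KP_1+\Exists(\bfalpha)$ together with all true $\Pi^\bfalpha_1$ sentences, $\RFN_{\Sigma^\bfalpha_1}(U)$, and $\lnot\RFN_{\Sigma^\bfalpha_1}(T)$. By compactness, the existence of such an $M$ is \emph{equivalent} to $T\not\le^{\Pi^\bfalpha_1}_{\RFN_{\Sigma^\bfalpha_1}}U$, so a Henkin or arithmetized-completeness construction cannot produce it without first establishing the consistency of that very theory---which is the whole content of the claim. Your sketch never explains how the hypothesis $|U|_{\Sigma^\bfalpha_1}<|T|_{\Sigma^\bfalpha_1}$ feeds into that consistency proof. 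Note in particular that the sentence $\exists x\in L_\bfalpha\,\varphi_0(x)$ you single out, though unwitnessed in $L_{|U|_{\Sigma^\bfalpha_1}}$, is still \emph{true} in $L_\alpha$ by the $\Sigma^\bfalpha_1$-soundness of $T$, so it cannot by itself force $\lnot\RFN_{\Sigma^\bfalpha_1}(T)$ in any $\Sigma_1$-correct interpretation of $L_\bfalpha$; you would need a nonstandard $T$-proof, and nothing in the construction says where it comes from or why it can coexist with $\RFN_{\Sigma^\bfalpha_1}(U)$ and full $\Pi^\bfalpha_1$-correctness. The missing idea is the paper's sentence $F$: fixing a representable $\gamma$ with $|U|_{\Sigma^\bfalpha_1}<\gamma$ and $T\vdash\Exists(\bfgamma)^{L_\bfalpha}$, the statement ``every representation whose value $U$ proves to exist has value below $\bfgamma$'' is a \emph{true} $\Pi^\bfalpha_1$ sentence (again using the $\Delta^\bfalpha_1$-definability of $U$), and $T+F$ proves $\RFN_{\bfalpha\barrepr}(U)$, hence $\RFN_{\Sigma^\bfalpha_1}(U)$. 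Feeding this into the assumed oracle derivation of $\RFN_{\Sigma^\bfalpha_1}(U)\to\RFN_{\Sigma^\bfalpha_1}(T)$ yields a sound theory $S=\KP_1+\Exists(\bfalpha)+F+G+\RFN_{\Sigma^\bfalpha_1}(U)$ with $S\vdash\RFN_{\Sigma^\bfalpha_1}(S)$, contradicting the second-incompleteness analogue that rests on the $\Pi_1$-boundedness lemma. Without something playing the role of $F$---a true $\Pi^\bfalpha_1$ sentence that lets a theory of $\Sigma^\bfalpha_1$-ordinal exceeding $|U|_{\Sigma^\bfalpha_1}$ prove $U$'s reflection outright---your model has no reason to exist, and the converse direction does not close.
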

Note that this immediately entails that the ordering $<^{\Pi^\bfalpha_1}_{\RFN_{\Sigma^\bfalpha_1}}$ induces on $\le^{\Pi^\bfalpha_1}_{\RFN_{\Sigma^\bfalpha_1}}$ equivalence classes is a well-ordering. Note, moreover, that the stringent condition of $\Delta^\bfalpha_1$-definability is necessary for the theorem; for a counter-example to the corresponding claim for $\Pi^\bfalpha_1$-definability see \cite[\textsection 3.5]{walsh2023characterizations}.

We prove all of these results assuming that $V=L$. This excludes the application of these results to theories (for instance) that imply the existence of $0^\sharp$. At the end of the paper we discuss further directions. The results proved in this paper should generalize to fine structural inner models compatible with $0^\sharp$ and beyond, assuming we limit ourselves to $\bfalpha$ ordinal analysis for small enough admissible $\bfalpha$.

Let us make a few remarks about the techniques deployed in this paper. The theorems in this paper strengthen various results in \cite{Walsh2022Incompleteness, walsh2023characterizations}. The main recurring technical tool in that work is the $\Pi^1_1$-completeness of well-foundedness, i.e., every $\Pi^1_1$ statement is $\mathsf{ACA}_0$-equivalent to the statement that a particular primitive recursive relation is well-founded. In the present paper we do not work with the complexity class $\Pi^1_1$, so this completeness theorem does not secure the results we prove. Accordingly, the techniques used in this paper are novel and specially designed for the new set-theoretic context.

In this paper, we do much with $\Sigma_1^{\omega_1^{\mathsf{CK}}}$-formulas (and with $\Sigma_1^\bfalpha$ formulas generally). Following \cite{sacks2017higher}, we may view $\Sigma_1^{\omega_1^{\mathsf{CK}}}$-definable sets as \emph{$\omega_1^{\mathsf{CK}}$-recursively enumerable} sets. Instead of converting $\Pi^1_1$ statements into well-foundedness claims, one can convert $\Sigma_1^{\omega_1^{\mathsf{CK}}}$-statements into statements about hypercomputations. Such transformations are technically not necessary for our proofs, but we include some discussion of this perspective for conceptual reasons. One of the central activities of traditional ordinal analysis is the classification of recursive functions into transfinite hierarchies and the calibration of the computational strength of theories in terms of such hierarchies. We will see that this tight connection between ordinal analysis and computational strength generalizes into the domain of $\bfalpha$ ordinal analysis. Moreover, this connection can be adumbrated in wholly precise mathematical terms.

Here is our plan for the rest of the paper. In \textsection \ref{kripke-platek} we review the basics of Kripke Platek set theory. We cover definitions of the syntactic complexity classes we will work with, as well as their partial truth definitions. Since we are proving very general results in the context of set theory, it is not sufficient to treat primitive recursive well-orders as surrogates for ordinals. Hence in \textsection \ref{reasonably-definable} we discuss the manner in which we deal with representations of ordinals. In \textsection \ref{kreisel-relations} we establish the robustness of $\bfalpha$ ordinal analysis by characterizing, in abstract terms, the partition of theories that it carves out. In \textsection \ref{alpha-recursion} we connect $\bfalpha$ ordinal analysis with $\alpha$ recursion theory. This discussion lifts the well-known connections between provably total recursive functions and ordinal analysis to the domain of $\alpha$-recursion theory. In \textsection \ref{comparison-theorem} we prove that relative $\mathsf{RFN}_{\Sigma^\bfalpha_1}$ strength in the presence of appropriate oracles coincides with $\bfalpha$ ordinal analysis. In \textsection \ref{well-foundedness-thm} we prove that $\mathsf{RFN}_{\Sigma^\bfalpha_1}$ induces a well-founded relation on a large swathe of theories, even without the presence of oracles. Finally, in \textsection \ref{discussion} we discuss the connection of these results with broader set-theoretic themes. In particular, we discuss the connection between our results and the \emph{Spector classes} introduced by Moschovakis \cite{Moschovakis1974ElementaryInd}. We also discuss how to generalize our main results to inner models beyond G\"{o}del's $L$. In particular, we discuss a generalization of the linearity and well-foundedness phenomena for $\Pi^1_3$ theorems to theories that contain $\mathsf{ZFC}$ + $\mathbf{\Delta}^1_2$ determinacy.

\section{Kripke-Platek set theory}\label{kripke-platek}
Throughout this paper, we will work with the weak set theory $\KP_1$, Kripke-Platek set theory with the Foundation schema restricted to $\Sigma_1\cup\Pi_1$-formulas. Working over any weak set theory $T$ comes with an obvious risk: Some claims that seem intuitively obvious may not be $T$-provable. In this section, we will cover some of the basic facts about $\KP_1$ that we will need in the rest of our paper.

\begin{definition}
    Consider the language $\{\in\}$. The $\Delta_0$-formulas formulas are the bounded formulas. That is, $\Delta_0$ is the least class of formulas containing atomic formulas that are closed under Booleans and bounded quantification. Following the usual convention, $\Sigma_0=\Pi_0=\Delta_0$.
    
    Now we inductively define $\Sigma_n$-formulas and $\Pi_n$-formulas as follows:
    \begin{itemize}
        \item A formula is $\Sigma_{n+1}$ if it is of the form $\exists x \psi(x,y_1,\cdots y_m)$ for some $\Pi_{n}$-formula $\psi$, and
        \item A formula is $\Pi_{n+1}$ if it is of the form $\forall x \psi(x,y_1,\cdots y_m)$ for some $\Sigma_{n}$-formula $\psi$.
    \end{itemize}

    If a theory $T$ extending $\KP_1$ proves that $\phi$ is equivalent both to a $\Sigma_n$-formula and to a $\Pi_n$-formula, then we say that $\phi$ is \emph{$T$-provably $\Delta_n$.} If the theory $T$ is clear in context, we omit $T$ and simply say that $\phi$ is $\Delta_n$.
    
    $\KP_1$ is the theory comprising the following axioms: 
    \begin{enumerate}
        \item Extensionality, Union, Pairing, Infinity.
        \item $\Delta_0$-Separation: For every $\Delta_0$-formula $\phi(x)$ and parameter $a$, the following is an axiom:
        \begin{equation*}
            \exists b \forall x [x\in b\leftrightarrow x\in a\land \phi(x)].
        \end{equation*}
        That is, $\{x\in a\mid \phi(x)\}$ is a set.
        \item $\Sigma_1\cup \Pi_1$-Foundation in the form of $\in$-Induction: For every formula $\phi(x)$, which is either $\Sigma_1$ or $\Pi_1$, the following is an axiom:
        \begin{equation*}
            [\forall y\in x \phi(y)\to \phi(x)]\to\forall x\phi(x).
        \end{equation*}
        \item $\Sigma_1$-Collection: For every $\Sigma_1$-formula $\phi(x,y)$ and parameter $a$ the following is an axiom:
        \begin{equation*}
            \forall x\in a \exists y\phi(x,y)\to \exists b\forall x\in a\exists y\in b\phi(x,y).
        \end{equation*}
    \end{enumerate}
\end{definition}

Throughout this paper, it is very important to track the complexity of formulas.
Let us start with calculations of the complexity of simple formulas:
\begin{lemma}
    Each of the following formulas is equivalent to a $\Delta_0$-formula:
    \begin{enumerate}
        \item $z=\{x,y\}$.
        \item $\{x,y\}\in z$.
        \item $z=\langle x,y\rangle$, where $\langle x,y\rangle = \{\{x\},\{x,y\}\}$ is a Kuratowski ordered pair.
        \item $z = \bigcup x$.
        \item $z = x\cup y$.
        \item $z = \varnothing$.
        \item $x$ is a transitive set.
        \item $x$ is an ordinal.
    \end{enumerate}
\end{lemma}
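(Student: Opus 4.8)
The plan is to exhibit, for each listed formula, a logically equivalent formula all of whose quantifiers are bounded, i.e.\ a $\Delta_0$ formula. The engine throughout is \emph{compositional substitution}: once a relation $R(w_1,\dots,w_k)$ has been shown $\Delta_0$, we may freely use it beneath bounded quantifiers, since $\Delta_0$ is by definition closed under Booleans and bounded quantification. Concretely, to assert that a $\Delta_0$-definable object lies in, or equals an element of, a set, one introduces a bounded quantifier over that set and substitutes the defining $\Delta_0$ formula. I would therefore work through the list in order, discharging later items by means of earlier ones.

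For the first seven items the required bounded forms are direct, and each equivalence is a matter of pure logic together with Extensionality, so no appeal to Foundation is needed. For (1), $z=\{x,y\}$ is $x\in z\wedge y\in z\wedge\forall w\in z\,(w=x\vee w=y)$. For (2), $\{x,y\}\in z$ becomes $\exists w\in z\,(w=\{x,y\})$, with the inner clause replaced by the $\Delta_0$ formula from (1). For (3), one writes $\exists p\in z\,\exists q\in z\,\big(p=\{x\}\wedge q=\{x,y\}\wedge\forall w\in z\,(w=p\vee w=q)\big)$, where $p=\{x\}$ abbreviates $x\in p\wedge\forall u\in p\,(u=x)$ and $q=\{x,y\}$ is an instance of (1); this says exactly that $z=\{\{x\},\{x,y\}\}$. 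For (4), $z=\bigcup x$ is the conjunction of $\forall w\in z\,\exists u\in x\,(w\in u)$ with $\forall u\in x\,\forall w\in u\,(w\in z)$. For (5), $z=x\cup y$ is $\forall w\in z\,(w\in x\vee w\in y)$ together with $\forall w\in x\,(w\in z)$ and $\forall w\in y\,(w\in z)$. For (6), $z=\varnothing$ is $\forall w\in z\,(w\neq w)$. For (7), transitivity of $x$ is $\forall y\in x\,\forall w\in y\,(w\in x)$. Every displayed formula uses only bounded quantification.

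The one case requiring care is (8). The textbook definition of an ordinal as a transitive set \emph{well-ordered} by $\in$ is not manifestly $\Delta_0$, since well-foundedness appears to quantify over subsets. I would take as the candidate $\Delta_0$ formula the conjunction of $\mathrm{Trans}(x)$ from (7) with the bounded clauses asserting that $\in$ is a strict linear order on $x$: namely $\forall a,b\in x\,(a\in b\vee a=b\vee b\in a)$, $\forall a,b,c\in x\,(a\in b\wedge b\in c\to a\in c)$, and $\forall a\in x\,(a\notin a)$. The forward implication, that an ordinal satisfies these, is immediate. The reverse implication---that a transitive, $\in$-linearly-ordered set is genuinely an ordinal---is where Foundation enters, and this is the step I expect to be the main (though surmountable) obstacle, precisely because the Foundation schema of $\KP_1$ is restricted to $\Sigma_1\cup\Pi_1$ formulas. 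The resolution is that $\KP_1$ already proves the full Foundation \emph{axiom}, that every nonempty set has an $\in$-minimal element, by applying $\in$-induction to the formula $\phi(w)$ given by $w\notin y$, where $y$ is the putative counterexample taken as a parameter; since $\phi$ is $\Delta_0$, and hence both $\Sigma_1$ and $\Pi_1$, this instance lies within the restricted schema. Granting the Foundation axiom, well-foundedness of $\in$ on any set is automatic, so a transitive set linearly ordered by $\in$ is well-ordered and thus an ordinal, which closes the equivalence. An alternative $\Delta_0$ characterization, ``a transitive set all of whose elements are transitive,'' is also $\KP_1$-equivalent to being an ordinal, but deriving trichotomy from it requires a more elaborate $\in$-induction, so I would prefer the linear-order formulation above.
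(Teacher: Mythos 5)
Your proposal is correct, and for items (1)--(7) it coincides with the paper's proof: the same bounded formulas (modulo trivial rearrangement), with each equivalence resting only on logic and Extensionality. The genuine divergence is at item (8), the only nontrivial case. The paper takes the characterization ``$x$ is a transitive set all of whose elements are transitive'' and outsources the $\KP_1$-verification of its equivalence with the classical definition to Lemmas 6.3 and 6.4 of the cited Jeon paper; you instead take ``$x$ is transitive and $\in$ is a strict linear order on $x$'' and close the reverse direction yourself, by first deriving the full Foundation \emph{axiom} in $\KP_1$ via $\in$-induction on $w\notin y$ with parameter $y$ (a $\Delta_0$ formula, hence an admissible instance of the restricted schema after padding with a vacuous quantifier to put it literally in $\Pi_1$ prenex form, since the paper defines $\Sigma_1$ and $\Pi_1$ strictly syntactically), and then observing that foundation upgrades the linear order to a well-order. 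Both routes are sound over $\KP_1$. Yours buys self-containedness: the only nonobvious step is the derivation of set foundation, which is short and stays within the restricted schema. The paper's route buys the purely $\in$-structural characterization with no order-theoretic clauses---arguably the more canonical $\Delta_0$ definition of ordinal, and the one reused implicitly later---at the cost of citing an external equivalence whose hard part (trichotomy for hereditarily transitive sets) requires the more elaborate double $\in$-induction you correctly flag as the reason to avoid it. One cosmetic caveat: in your forward direction, if ``linear'' in the classical definition is read as trichotomy alone, then transitivity of $\in$ on $x$ is not definitional but follows quickly from trichotomy plus foundation; this is a one-line repair, not a gap.
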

\begin{proof}
    We can see that the following equivalences hold:
    \begin{enumerate}
        \item $z = \{x,y\} \iff (x\in z \land y\in z) \land \forall w\in z(w=x\lor w=y)$.
        \item $\{x,y\}\in z \iff \exists w\in z (w = \{x,y\})$.
        \item $z = \langle x,y\rangle \iff (\{x\}\in z\land \{x,y\}\in z)\land \forall w\in z (w = \{x\} \lor w = \{x,y\})$.
        \item $z=\bigcup x \iff [\forall w\in z\exists y\in x (w\in y)] \land [\forall y\in x\forall w\in y (w\in z)]$.
        \item $z=x\cup y \iff [\forall w\in x(w\in z)\land \forall w\in y(w\in z)] \land [\forall w\in z(z\in x \lor z\in y)]$.
        \item $z=\varnothing \iff \forall w\in z (w\neq w)$.
        \item $\text{$x$ is transitive} \iff \forall y\in x \; \forall z\in y \; (z\in x)$.
        \item $\text{$x$ is an ordinal} \iff \text{$x$ is transitive} \land \forall y\in x \; (\text{$y$ is transitive})$.
    \end{enumerate}
    The last equivalence requires clarification:
    The classical definition of $\alpha$ being an ordinal is that $\alpha$ is transitive and $(\alpha,\in)$ is well-ordered in the sense that $(\alpha,\in)$ is linear and for every non-empty $y\subseteq \alpha$, $y$ has an $\in$-minimal element. Unfortunately, the classical definition is $\Pi_1$.
    However, we can prove that $\alpha$ is an ordinal if and only if $\alpha$ is a transitive set of transitive sets. The reader can see the proof of Lemma 6.3 and 6.4 of \cite{Jeon2022Ackermann} works over $\KP_1$, and it shows $(\alpha,\in)$ is well-ordered if and only if $\alpha$ is a transitive set of transitive sets.
\end{proof}

We will make use of many more $\Delta_0$ notions. Instead of identifying them here, we will identify them as we go. We will only ever claim that a notion is $\Delta_0$ \emph{without proof} if we trust that the reader can easily verify the claim by hand.

\begin{lemma}
    $\KP_1$ proves the adequacy of the Kuratowski definition of ordered pairs in the following sense: If $\langle x_0,x_1\rangle = \langle y_0,y_1\rangle$, then $x_0=y_0$ and $x_1=y_1$.
\end{lemma}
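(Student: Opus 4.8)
The plan is to carry out the standard case analysis, noting that it uses only Extensionality and Pairing, both of which are among the axioms of $\KP_1$; in particular no appeal to Foundation or Collection is required. First I would record two consequences of Extensionality (the existence of the sets involved being guaranteed by Pairing): for all sets, $\{a\}=\{b\}$ if and only if $a=b$, and $\{a,b\}=\{c,d\}$ if and only if $(a=c\land b=d)\lor(a=d\land b=c)$. The preceding lemma already shows these singletons and unordered pairs are $\Delta_0$-definable, so there is no difficulty forming them inside the theory.

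Unfolding the hypothesis, I would rewrite $\langle x_0,x_1\rangle=\langle y_0,y_1\rangle$ as
\[ \{\{x_0\},\{x_0,x_1\}\}=\{\{y_0\},\{y_0,y_1\}\}, \]
and then split on whether $x_0=x_1$. In the degenerate case $x_0=x_1$, the left-hand side collapses to the singleton $\{\{x_0\}\}$; Extensionality then forces $\{y_0\}=\{x_0\}$ and $\{y_0,y_1\}=\{x_0\}$, whence $y_0=x_0$ and $y_1=x_0=x_1$, as desired. In the case $x_0\neq x_1$, the members $\{x_0\}$ and $\{x_0,x_1\}$ are distinct (since $x_1$ separates them), so the right-hand side has the same two distinct members. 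The member $\{x_0\}$ must then equal $\{y_0\}$ or $\{y_0,y_1\}$; the latter would force $y_0=y_1=x_0$ and collapse the right-hand side to a singleton, contradicting that it has two distinct members, so $\{x_0\}=\{y_0\}$ and hence $x_0=y_0$. The remaining member $\{x_0,x_1\}$ cannot equal $\{y_0\}=\{x_0\}$ (as $x_0\neq x_1$), so it equals $\{y_0,y_1\}=\{x_0,y_1\}$; comparing these via the second auxiliary fact and using $x_0\neq x_1$ yields $x_1=y_1$.

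I expect no serious obstacle here: the result is elementary and the only point requiring care is the degenerate case $x_0=x_1$, where the ordered pair collapses to a singleton and a careless argument presuming two distinct members would break down. The whole argument is first-order and essentially quantifier-free once the sets are formed, so it goes through verbatim in $\KP_1$ without any use of induction or collection.
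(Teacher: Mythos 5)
Your proof is correct and follows essentially the same elementary extensionality-based case analysis as the paper; the only difference is organizational (you split first on whether $x_0=x_1$, whereas the paper splits directly on which member of the left-hand side equals which member of the right-hand side), and your observation that only Extensionality and Pairing are needed matches the paper's remark that the argument avoids $\Sigma_1$-Collection.
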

\begin{proof}
    Suppose that $\langle x_0,x_1\rangle = \langle y_0,y_1\rangle$, that is, 
    \begin{equation*}
        \{\{x_0\},\{x_0,x_1\}\} = \{\{y_0\},\{y_0,y_1\}\}.
    \end{equation*}
    Then we have $\{x_0\}=\{y_0\}\lor \{x_0\}=\{y_0,y_1\}$.
    Similarly, we have  $\{x_0,x_1\}=\{y_0\}\lor \{x_0,x_1\}=\{y_0,y_1\}$. Based on this, let us divide the cases:
    \begin{itemize}
        \item If $\{x_0\} = \{y_0,y_1\}$, then $x_0=y_0=y_1$. Thus we get $\{x_0,x_1\}=\{y_0\}$, so $x_1=y_0$. In sum, $x_0=x_1=y_0=y_1$.

        \item If $\{x_0,x_1\}=\{y_0\}$, then by an argument similar to the previous one, we have $x_0=x_1=y_0=y_1$.

        \item Now consider the case $\{x_0\}=\{y_0\}$ and $\{x_0,x_1\}=\{y_0,y_1\}$. Then we have $x_0=y_0$, so $\{x_0,x_1\}=\{x_0,y_1\}$. Hence $x_1\in \{x_0,y_1\}$, so either $x_1=x_0$ or $x_1=y_1$.
        But if $x_1=x_0$, then from $y_1\in \{x_0,x_1\}$ we get $x_1=y_1$. \qedhere
    \end{itemize}
\end{proof}

\begin{remark}
    The proof for the above lemmas does not use $\Sigma_1$-Collection.
\end{remark}

Our definitions of $\Sigma_n$ and $\Pi_n$ formulas is quite restrictive. They apply only to formulas in prenex normal form with alternating unbounded formulas. However, it turns out that we can \emph{normalize} (i.e., transform in $\mathsf{KP}_1$) formulas in other forms into $\Sigma_n$ or $\Pi_n$-formulas. Influenced by \cite{FriedmanLiWong2016KPalpharec} and $\mathcal{E}_n$- and $\mathcal{U}_n$-hierarchies in \cite{Jeon2022Ackermann}, let us define the following:

\begin{definition} \label{Definition: LevyFleischmann}
    Define $\underline{\Sigma}_n$ and $\underline{\Pi}_n$ recursively as follows:
	\begin{itemize}
		\item $\underline{\Sigma}_0=\underline{\Pi}_0$ is the set of all bounded formulas.
		\item $\underline{\Sigma}_{n+1}$ is the least class containing $\underline{\Pi}_n$ that is closed under $\land$, $\lor$, bounded quantification and unbounded $\exists$.
		\item $\underline{\Pi}_{n+1}$ is the least class containing $\underline{\Sigma}_n$ that is closed under $\land$, $\lor$, bounded quantification and unbounded $\forall$.
	\end{itemize}
\end{definition}
$\underline{\Sigma}_1$-formulas are also known as $\Sigma$-formulas.
Unlike the usual $\Sigma_n$ or $\Pi_n$ formulas, every formula is either one of  $\underline{\Sigma}_n$ or $\underline{\Pi}_n$ for some $n$.

\begin{definition}
    A $\underline{\Sigma}_n$ or $\underline{\Pi}_n$ formula is \emph{normalizable} if it is equivalent to a $\Sigma_n$ or a $\Pi_n$ formula respectively.
\end{definition}

$\KP_1$ proves $\underline{\Sigma}_1$-formula are normalizable. In general, we can see the following holds, and we present its proof for the reader's convenience.

\begin{lemma}[Proposition 2.3, 2.4 of \cite{FriedmanLiWong2016KPalpharec}] \label{Lemma: Formula-normalization}
    $\KP_1$ proves the following:
    Suppose that $\phi$ and $\psi$ are normalizable formulas of the same complexity.
    \begin{enumerate}
        \item If $\phi$ is $\Sigma_n$, then $\exists x\phi$ and $\exists x\in a \; \phi$ are also normalizable. Similarly, if $\phi$ is $\Pi_n$, then $\forall x\phi$ and $\forall x\in a \; \phi$ are also normalizable.
        \item $\lnot\phi$, $\phi\land\psi$, $\phi\lor\psi$ are normalizable.
        \item Assuming $\Sigma_n$-Collection, if $\phi$ is a $\Sigma_m$-formula for $m\le n$, then $\forall x\in a \; \phi$ is normalizable.
        Similarly, if $\phi$ is $\Pi_n$ and $m\le n$, then $\exists x\in a \; \phi$ is normalizable.
    \end{enumerate}
\end{lemma}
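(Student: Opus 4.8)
\section*{Proof proposal}

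The plan is to prove all three clauses \emph{simultaneously} by induction on $n$, together with the $\Pi$-dual of each clause, since the $\Sigma$- and $\Pi$-cases feed into one another at the next lower level. The base case $n=0$ is immediate: $\underline{\Sigma}_0=\underline{\Pi}_0=\Delta_0$ is closed under Booleans and bounded quantification by definition, so every such formula is already normal. For the inductive step the guiding distinction is between reductions that run \emph{with} the grain of the quantifier prefix and those that run \emph{against} it. Contracting two adjacent existentials, or absorbing a bounded $\exists x\in a$ into a leading unbounded $\exists$, costs nothing beyond the $\Delta_0$-definability of Kuratowski pairing established above; these are the moves behind clauses (1) and (2). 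Pushing a bounded quantifier past an unbounded quantifier of the \emph{opposite} polarity is exactly where collection is required, and this is clause (3).

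The first workhorse is quantifier contraction. Since ``$z=\langle x,y\rangle$'' and hence the graphs of the projections are $\Delta_0$ (using that $x,y\in\bigcup z$ whenever $z=\langle x,y\rangle$), I have
\[
\exists x\,\exists y\,\theta \iff \exists p\,\theta\bigl((p)_0,(p)_1\bigr),\qquad \forall x\,\forall y\,\theta\iff\forall p\,\theta\bigl((p)_0,(p)_1\bigr),
\]
where substitution of the projections is carried out by the \emph{existential} scheme $\exists x\in\bigcup p\,\exists y\in\bigcup p(p=\langle x,y\rangle\wedge\cdots)$ when $\theta$ is in $\Sigma$-form and by the dual \emph{universal} scheme $\forall x\in\bigcup p\,\forall y\in\bigcup p(p=\langle x,y\rangle\to\cdots)$ when $\theta$ is in $\Pi$-form; choosing the scheme to match the polarity of $\theta$ keeps the extra bounded quantifiers running with the grain, so no collection is incurred (and non-pairs $p$ are handled correctly by each scheme). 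This yields clause (1): for $\phi=\exists y\,\chi$ with $\chi\in\Pi_{n-1}$, both $\exists x\,\phi$ and $\exists x\in a\,\phi$ contract to a single $\exists p$ over a matrix that is $\Delta_0\wedge\Pi_{n-1}$, hence $\Pi_{n-1}$ by clause (2) at level $n-1$, so the whole formula is $\Sigma_n$. Clause (2) follows similarly: given two $\Sigma_n$ formulas $\exists y\,\chi_1$ and $\exists y'\,\chi_2$ with $\chi_i\in\Pi_{n-1}$, I pull both existentials to the front, form $\chi_1\wedge\chi_2$ or $\chi_1\vee\chi_2$ (which lies in $\Pi_{n-1}$ by clause (2) at level $n-1$), and contract by clause (1); negation reduces to the $\Pi$-dual via the logical equivalence $\neg\Sigma_n\equiv\Pi_n$.

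For clause (3) the key identities are the collection swaps
\[
\forall x\in a\,\exists y\,\chi\iff\exists b\,\forall x\in a\,\exists y\in b\,\chi,\qquad \exists x\in a\,\forall y\,\chi\iff\forall b\,\exists x\in a\,\forall y\in b\,\chi,
\]
the first an instance of $\Sigma_m$-collection applied to $\chi$ (valid since $\Sigma_n$-collection entails $\Sigma_m$-collection for $m\le n$), the second its contrapositive. After the swap the inner block has only bounded quantifiers over $\chi$; the offending one, $\exists y\in b$ over $\chi\in\Pi_{m-1}$, is handled by clause (3) at level $m-1$ (the $\Pi$-case, using $\Sigma_{m-1}$-collection), after which the outer bounded $\forall x\in a$ runs with the grain and the leading $\exists b$ produces a $\Sigma_m$ formula; the dual case is symmetric. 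The main obstacle is precisely the bookkeeping of this mutual recursion: one must check that in every reduction the bounded quantifier one is forced to introduce lands on a matrix of strictly lower rank and of the polarity that its own prefix can absorb, so that collection is invoked only at the level at which it is hypothesized and the collection-free clauses (1)--(2) never secretly depend on it. Making the $\Sigma$- and $\Pi$-induction hypotheses precise and verifying the polarity matching in the handful of cases is the substance of the argument.
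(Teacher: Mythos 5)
Your clause (3) is essentially the paper's argument (the collection swap $\forall x\in a\,\exists y\,\chi\leftrightarrow\exists b\,\forall x\in a\,\exists y\in b\,\chi$, then recursion on the inner bounded quantifier at level $m-1$ with $\Sigma_{m-1}$-Collection), and your clause (2) is a harmless restructuring; the genuine gap is in the contraction step that underlies clauses (1)--(2). You choose between the existential and universal pairing schemes according to the polarity of the \emph{matrix} $\theta$, and you assert that ``non-pairs $p$ are handled correctly by each scheme.'' Both claims are wrong: correctness of a contraction forces the scheme to match the polarity of the \emph{contracted quantifier block}, independently of $\theta$. Concretely, to contract $\exists x\,\exists y\,\theta$ with $\theta\in\Pi_{n-1}$ your rule dictates the universal scheme, giving
\begin{equation*}
\exists p\,\forall u\in p\,\forall x\in u\,\forall y\in u\,\bigl(p=\langle x,y\rangle\to\theta(x,y)\bigr),
\end{equation*}
but $p=\varnothing$ satisfies the inner part vacuously, so this formula is provably true regardless of $\theta$ and the displayed equivalence fails; dually, the existential scheme under a contracted $\forall p$ is vacuously refuted by $p=\varnothing$. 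If you instead use the correct (existential) scheme for an $\exists\exists$-contraction, you place bounded existentials in front of a $\Pi_{n-1}$ matrix---exactly the against-the-grain configuration of clause (3)---so collection leaks into the supposedly collection-free clauses. As written, the induction either proves a false equivalence or secretly uses collection.

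The repair, which is what the paper's \autoref{Lemma: TwoQuantifier-reducing} actually does, is to perform the pairing substitution \emph{inside the $\Delta_0$ matrix} rather than at the head of the prefix: for $\exists x\,\exists v_0\,\forall v_1\cdots\mathsf{Q}v_{n-1}\,\phi_0$ with $\phi_0\in\Delta_0$, replace $\phi_0$ by the scheme whose polarity matches the contracted block; the scheme's bounded quantifiers are then absorbed into $\Delta_0$, where there is no grain constraint at all, and the pointwise equivalence for $p=\langle x,v_0\rangle$ (together with the correct truth value on non-pairs) propagates unchanged through the untouched prefix $\forall v_1\cdots\mathsf{Q}v_{n-1}$, with no collection. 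Alternatively, your top-level substitution can be salvaged by a $\Delta_0$ pairhood guard of matching polarity: writing $\mathrm{Pair}(p)$ for the $\Delta_0$ formula $\exists u\in p\,\exists x\in u\,\exists y\in u\,(p=\langle x,y\rangle)$, use
\begin{equation*}
\exists p\,\Bigl(\mathrm{Pair}(p)\land\forall u\in p\,\forall x\in u\,\forall y\in u\,\bigl(p=\langle x,y\rangle\to\theta(x,y)\bigr)\Bigr)
\end{equation*}
for the existential contraction (the guard excludes vacuous witnesses, and uniqueness of Kuratowski decomposition, proved in $\KP_1$ by the paper's adequacy lemma, gives the left-to-right direction), and dually with $\lnot\mathrm{Pair}(p)\lor\cdots$ under a contracted $\forall p$. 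With either repair your simultaneous induction with duals goes through and is then close in spirit to the paper's proof.
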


The following lemma has a central role in our proof:
\begin{lemma} \label{Lemma: TwoQuantifier-reducing}
    Working over $\KP_1$, let $\phi(x_0,x_1,v_0,v_1,\cdots, v_{n-1},\vec{z})$ be a $\Delta_0$ formula. If we set
    \begin{multline*}
        \phi'(x,v_0,\cdots, v_{n-1},\vec{z})\equiv \exists u\in x\exists y_0\in u\exists y_1\in u \\ [(x = \langle y_0,y_1\rangle) \land \phi(y_0,y_1,v_0,v_1,v_0,v_1,\cdots, v_{n-1},\vec{z})],
    \end{multline*}
    then $\phi'$ is also $\Delta_0$ and we have the following:
    \begin{enumerate}
        \item[(i)] For any $x_0$ and $x_1$, if we let $x=\langle x_0,x_1\rangle$, then
        \begin{equation} \label{Formula: TwoQuantifier-reducing 0}
            \forall \vec{z} \forall v_0\forall v_1\cdots\forall v_{n-1}\ [\phi(x_0,x_1,v_0,\cdots,v_{n-1},\vec{z})\leftrightarrow \phi'(x,v_0,\cdots,v_{n-1},\vec{z})].
        \end{equation}
        \item[(ii)]  We have
        \begin{multline*}
            \forall \vec{z} \left[\forall x_0\forall x_1\exists v_0\forall v_1\cdots \mathsf{Q} v_{n-1} \phi(x_0,x_1,v_0,v_1,\cdots,v_{n-1},\vec{z})  \right.\\ \left. \leftrightarrow \forall x\exists v_0\forall v_1\cdots \mathsf{Q} v_{n-1} \phi'(x,v_0,v_1,\cdots,v_{n-1},\vec{z})\right].
        \end{multline*}
    \end{enumerate}
\end{lemma}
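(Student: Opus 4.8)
The plan is to handle the three assertions in sequence, with the adequacy lemma for Kuratowski pairs (just proved) carrying the essential content of part (i), and with the remaining parts following by bookkeeping over first-order logic. First, that $\phi'$ is $\Delta_0$ is immediate from the running conventions: its matrix is a Boolean combination of $x=\langle y_0,y_1\rangle$, which was shown to be $\Delta_0$, and $\phi(y_0,y_1,\dots)$, which is $\Delta_0$ by hypothesis, while $\phi'$ adjoins only the bounded quantifiers $\exists u\in x$, $\exists y_0\in u$, $\exists y_1\in u$. In particular, as in the preceding Remark, no appeal to $\Sigma_1$-Collection is required.

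For (i) I fix $x_0,x_1$, put $x=\langle x_0,x_1\rangle$, and argue the matrix equivalence for arbitrary $\vec z$ and $v_0,\dots,v_{n-1}$. The key observation is that as $u$ ranges over the members of $x=\{\{x_0\},\{x_0,x_1\}\}$ and $y_0,y_1$ over the members of $u$, the clause $x=\langle y_0,y_1\rangle$ can be realized only by the witnesses $u=\{x_0,x_1\}$, $y_0=x_0$, $y_1=x_1$: existence of such witnesses is direct, and their uniqueness is exactly the adequacy lemma, which forces $y_0=x_0$ and $y_1=x_1$ whenever $\langle y_0,y_1\rangle=\langle x_0,x_1\rangle$. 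Consequently the bounded-quantifier body of $\phi'(x,\vec v,\vec z)$ is governed entirely by $\phi(x_0,x_1,\vec v,\vec z)$, which yields the biconditional displayed in \eqref{Formula: TwoQuantifier-reducing 0}. This is the only nontrivial ingredient, and it is precisely where uniqueness of pairing is used.

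For (ii) I reduce everything to (i). Since (i) supplies, for $x=\langle x_0,x_1\rangle$, the equivalence of the matrices $\phi(x_0,x_1,\vec v,\vec z)$ and $\phi'(x,\vec v,\vec z)$ for every $\vec v$, I may prefix both by the common string $\exists v_0\forall v_1\cdots\mathsf{Q}v_{n-1}$; distributing a fixed quantifier prefix across provably equivalent matrices is a purely logical move, valid over $\KP_1$, and gives
\[
\exists v_0\forall v_1\cdots\mathsf{Q}v_{n-1}\,\phi(x_0,x_1,\vec v,\vec z)\;\leftrightarrow\;\exists v_0\forall v_1\cdots\mathsf{Q}v_{n-1}\,\phi'(\langle x_0,x_1\rangle,\vec v,\vec z).
\]
Writing $\Phi(x_0,x_1,\vec z)$ and $\Phi'(x,\vec z)$ for the two sides, it remains to prove $\forall x_0\forall x_1\,\Phi\leftrightarrow\forall x\,\Phi'$. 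The direction ($\Leftarrow$) is immediate: given any $x_0,x_1$, form $x=\langle x_0,x_1\rangle$ by Pairing, instantiate the hypothesis at this $x$, and invoke the displayed equivalence.

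The direction ($\Rightarrow$), and the non-pair bookkeeping it entails, is where I expect the only real obstacle to lie. The problem is that $\forall x$ also ranges over sets $x$ that code no ordered pair, and for such $x$ the displayed reduction says nothing at all, since it was derived only for $x$ of the form $\langle x_0,x_1\rangle$. The resolution is that $\phi'$ has been arranged so that at a non-pair $x$ the clause $x=\langle y_0,y_1\rangle$ is never realized, so the bounded-quantifier matrix of $\phi'(x,\vec v,\vec z)$ collapses to a fixed truth value independent of $\vec v$, making $\Phi'(x,\vec z)$ hold vacuously at every non-pair; combined with the pair case this delivers $\forall x\,\Phi'(x,\vec z)$. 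Thus the crux is verifying that the shape of $\phi'$ renders the contracted formula insensitive to the ``junk'' values of $x$, so that collapsing $\forall x_0\forall x_1$ to a single $\forall x$ neither adds nor removes content; everything else is the uniqueness of pairing from (i) together with the logical distribution of the quantifier prefix.
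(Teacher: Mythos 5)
Your handling of the $\Delta_0$-complexity claim, of part (i) via the adequacy lemma for Kuratowski pairs, and of the right-to-left direction of part (ii) is correct and matches the paper. The problem is the step you yourself single out as the crux. At a non-pair $x$ the matrix of $\phi'(x,\vec v,\vec z)$ does collapse to a truth value independent of $\vec v$, but that value is \emph{false}, not true: $\phi'$ is a bounded \emph{existential} statement whose conjunct $x=\langle y_0,y_1\rangle$ is unsatisfiable when $x$ is not an ordered pair. Prefixing any quantifier string $\exists v_0\forall v_1\cdots\mathsf{Q}v_{n-1}$ to an identically false matrix yields a false sentence, so $\Phi'(x,\vec z)$ \emph{fails} at every non-pair $x$ (e.g.\ at $x=\varnothing$), and hence $\forall x\,\Phi'(x,\vec z)$ is simply false. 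Taking $\phi\equiv(v_0=v_0)$ makes the left-hand side of (ii) true and the right-hand side false, so the left-to-right direction does not go through with $\phi'$ as written; your claim that $\Phi'$ ``holds vacuously at every non-pair'' is exactly the point at which the argument breaks.

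The repair is to use the dual formula $\phi^\sharp(x,\vec v,\vec z):\equiv\forall u\in x\,\forall y_0\in u\,\forall y_1\in u\,[x=\langle y_0,y_1\rangle\to\phi(y_0,y_1,\vec v,\vec z)]$, which is still $\Delta_0$, agrees with $\phi'$ (and with $\phi(x_0,x_1,\vec v,\vec z)$) when $x=\langle x_0,x_1\rangle$ by the very argument of part (i), and is genuinely vacuously true at non-pairs; with $\phi^\sharp$ in place of $\phi'$ both directions of (ii) follow by the case split you describe. To be fair, the paper's own proof of (ii) has the same lacuna --- it only ever instantiates $x$ at ordered pairs and never examines the non-pair case --- and in the application inside \autoref{Lemma: Formula-normalization} the paper silently passes to the dual form by writing $\lnot\phi_0'$ rather than $(\lnot\phi_0)'$. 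So you have reproduced, rather than repaired, a gap in the source; but as a freestanding argument the proposal rests on a false key claim and needs the universal form of the contraction to be correct.
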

\begin{proof}
    Let us prove the first claim: Suppose that $x=\langle x_0,x_1\rangle$. By taking $u=\{x_0,x_1\}$, we have
    \begin{equation*}
        \exists u\in x (x_0,x_1\in u\land x=\langle x_0,x_1\rangle).
    \end{equation*}
    Thus if $\phi(x_0,x_1,v_0,\cdots,v_{n-1})$ holds, then  $\phi'(x,v_0,\cdots,v_{n-1})$ follows. 
    
    Conversely, suppose that $\phi'(x,v_0,\cdots,v_{n-1})$ holds and that for some $u\in x$ and some $y_0,y_1\in u$:
    \begin{equation*}
        x=\langle y_0,y_1\rangle\land \phi(y_0,y_1,v_0,\cdots,v_{n-1}).
    \end{equation*}

    Now let us prove the second claim by induction on $n$. The left-to-right direction is clear: For given $x_0$ and $x_1$, set $x=\langle x_0,x_1\rangle$. 
    For the other direction, suppose that for every $x$ the following holds:
    \begin{equation} \label{Formula: TwoQuantifier-reducing 1}
        \exists v_0\forall v_1\cdots \mathsf{Q} v_{n-1} \phi'(x,v_0,v_1,\cdots,v_{n-1}).
    \end{equation}
    By taking $x=\langle x_0, x_1\rangle$ from \eqref{Formula: TwoQuantifier-reducing 1} and applying \eqref{Formula: TwoQuantifier-reducing 0}, we get the following:
    \begin{equation*}
        \exists v_0\forall v_1\cdots \mathsf{Q} v_{n-1} \phi(x_0,x_1,v_0,v_1,\cdots,v_{n-1}). \qedhere 
    \end{equation*}
\end{proof}

Before stating the proof of  \autoref{Lemma: Formula-normalization}, let us observe that we can find $\phi'$ from $\phi$ that is uniform to $\vec{z}$. That is, the choice of parameters do not affect the new formula obtained by encoding variables into a single one.

\begin{proof}[Proof of \autoref{Lemma: Formula-normalization}]
    \begin{enumerate} 
        \item Suppose that $\varphi(x)$ is $\Sigma_n$. We first claim that $\exists x\varphi(x)$ is equivalent to a $\Sigma_n$-formula.
        Suppose that $\varphi(x)$ is of the form 
        \begin{equation*}
            \exists v_0\forall v_1\cdots\mathsf{Q} v_{n-1} \phi_0(x,v_0,\cdots,v_{n-1})
        \end{equation*}
        for some $\Delta_0$-formula $\phi_0$.
        By \autoref{Lemma: TwoQuantifier-reducing}, we have a $\Delta_0$-formula $\phi_0'$ satisfying the following:
        \begin{multline*}
            \forall x\forall v_0\exists v_1\cdots \overline{\mathsf{Q}} v_{n-1} \lnot\phi(x,v_0,v_1,\cdots,v_{n-1}) \\ \leftrightarrow \forall y\exists v_1\cdots \overline{\mathsf{Q}} v_{n-1} \lnot\phi_0'(y,v_1,\cdots,v_{n-1}),
        \end{multline*}
        where $\overline{\mathsf{Q}}$ is the dual of the quantifier $\mathsf{Q}$. 
        By taking the negation, we have
        \begin{multline*}
            \exists x\exists v_0\forall v_1\cdots \mathsf{Q} v_{n-1} \phi(x,v_0,v_1,\cdots,v_{n-1}) \\ \leftrightarrow \exists y\forall v_1\cdots \mathsf{Q} v_{n-1} \phi_0'(y,v_1,\cdots,v_{n-1}).
        \end{multline*}
        
        Now let us consider the bounded quantifier case. Formally, $\exists x\in a\phi(x)$ is equivalent to $\exists x (x\in a\land \phi(x))$.
        Under the same assumption on $\phi$ and $\phi_0$, $\exists x (x\in a\land \phi(x))$ is equivalent to
        \begin{equation*}
            \exists x \exists v_0 \forall v_0\cdots\mathsf{Q}v_{n-1} [x\in a\land \phi_0(x,v_0,\cdots,v_{n-1})].
        \end{equation*}
        
        The formula $x\in a\land \phi_0(x,v_0,\cdots,v_{n-1})$ is $\Delta_0$, so by exactly the same argument we did for an unbounded $\exists$, we can normalize $\exists x\in a\phi(x)$ to a $\Sigma_n$-formula. Normalizing $\forall x\phi(x)$ or $\forall x\in a\phi(x)$ when $\phi$ is $\Pi_n$ is analogous, so we omit the proof.
        
        \item Suppose that $\phi$ and $\psi$ are $\Sigma_n$-formulas. Showing the normalizability of $\lnot\phi$ is easy, so let us only consider the normalizability of $\phi\land\psi$. The case for $\phi\lor\psi$ is analogous.
        
        Suppose that $\phi$ and $\psi$ are $\Sigma_n$-formulas. Then we have $\Delta_0$-formulas $\phi_0$ and $\psi_0$ such that
        \begin{equation*}
            \phi(\vec{x}) \equiv \exists u_0 \forall u_1 \cdots \mathsf{Q} u_{n-1} \phi_0(u_0,u_1,\cdots, u_{n-1},\vec{x})
        \end{equation*}
        and
        \begin{equation*}
            \psi(\vec{x}) \equiv \exists v_0 \forall v_1 \cdots \mathsf{Q} v_{n-1} \psi_0(v_0,v_1,\cdots, v_{n-1},\vec{x}).
        \end{equation*}
        We may assume that all of $u_i$ and $v_i$ are different variables by substitution, so by basic predicate logic, the following holds:
        \begin{equation*}
            \phi(\vec{x})\land\psi(\vec{x}) \iff \\
            \exists u_0\exists v_0 \forall u_1\forall v_1\cdots \mathsf{Q}u_{n-1}\mathsf{Q}v_{n-1}[\phi_0\land \psi_0].
        \end{equation*}

        Now inductively reduce duplicated quantifiers from the inside to the outside by applying the second clause of \autoref{Lemma: TwoQuantifier-reducing}.

        To illustrate what is happening, let us consider the case $n=2$. In this case, we need to normalize the following formula:
        \begin{equation*}
            \exists u_0\exists v_0 \forall u_1\forall v_1[\phi_0(u_0,u_1)\land \psi_0(v_0,v_1)].
        \end{equation*}
        By Clause 2 of \autoref{Lemma: TwoQuantifier-reducing}, we can find some $\Delta_0$-formula $\chi(u_0,v_0,w_1)$ such that \emph{for every $u_0$ and $v_0$, we have}
        \begin{equation*}
            \forall u_1\forall v_1[\phi_0(u_0,u_1)\land \psi_0(v_0,v_1)] \leftrightarrow \forall w_1 \chi(u_0,v_0,w_1).
        \end{equation*}
        Then applying the equivalent form of the second clause of \autoref{Lemma: TwoQuantifier-reducing}, which is for repeating $\exists$, we can find a $\Delta_0$-formula $\chi'(w_0,w_1)$, we have
        \begin{equation*}
            \exists u_0\exists u_1 \forall w_1 \chi(u_0,v_0,w_1) \leftrightarrow \exists w_0\forall w_1 \chi'(w_0,w_1).
        \end{equation*}

        \item We will prove this claim by induction on $n$.
        $\Sigma_n$-Collection states the following: For every $\Sigma_n$-formula $\phi(x,y,p)$,
        we have for every $a$ and $p$,
        \begin{equation*}
            \forall x\in a \exists y \phi(x,y,p)
            \to \exists b \forall x\in a \exists y\in b \phi(x,y,p).
        \end{equation*}
        Clearly the consequent implies the antecedent, so the equivalence holds.
        Now assume that $\phi$ by a $\Pi_{n-1}$-formula. Then $\exists y\phi(x,y,p)$ becomes a $\Sigma_n$-formula.
        
        Thus, the claim that $\forall x\in a\psi(x)$ is equivalent to a $\Sigma_n$-formula $\psi$ reduces to the claim that $\exists y\in b \chi(y)$ is equivalent to a $\Pi_{n-1}$-formula $\chi$.
        However, by the inductive assumption, $\Sigma_{n-1}$-Collection shows $\exists y\in b \chi(y)$ for a $\Pi_{n-1}$-formula $\chi$ is equivalent to a $\Pi_{n-1}$-formula.
        
        Reducing $\exists x\in a\phi(x)$ for a $\Pi_n$-formula $\phi$ is analogous, so we omit it. \qedhere 
    \end{enumerate}
\end{proof}

In particular, $\Sigma_n$-Collection entails that $\underline{\Sigma}_n$-formulas are provably equivalent to $\Sigma_n$-formulas, and $\underline{\Pi}_n$-formulas are provably equivalent to $\Pi_n$-formulas. In the absence of $\Sigma_n$-Collection, however, we cannot assume that $\underline{\Sigma}_n$-formulas are normalizable to $\Sigma_n$-formulas.

We will frequently define functions and will use them freely in formulas.
The reader might find it challenging to analyze the complexity of formulas with newly introduced functions, so we provide a rigorous treatment of the issue:
\begin{definition}
    Let $\Gamma$ be a class of formulas over the language of a theory $T$.
    Suppose that $\phi(\vec{x},y)$ is a formula in $\Gamma$ and assume that $T$ proves $\forall \vec{x}\exists!y\phi(\vec{x},y)$. Then we say \emph{$T$ proves $\phi$ defines a function}. 

    In practice, we write $F(\vec{x})=y$ instead of $\phi(\vec{x},y)$. We say \emph{$F$ is a $\Gamma$-definable function} if a defining formula $\phi$ is in $\Gamma$. Also, instead of writing $\exists y \phi(\vec{x},y)\land \psi(y)$, we write $\psi(F(\vec{x}))$. 
\end{definition}

Note that if $T$ proves $\phi$ defines a function, $T$ proves $\exists y [\phi(\vec{x},y)\land \psi(y)]$ is equivalent to $\forall y [\phi(\vec{x},y)\to \psi(y)]$.

The following lemma shows composing $\Sigma_1$-definable function with a $\Sigma_1$-formula does not change the complexity:
\begin{lemma} \label{Lemma: Sigma1-composition-complexity}
    Let $F$ be a $\Sigma_1$-definable function and $\psi(x)$ be a $\Sigma_1$-formula or a $\Pi_1$-formula. Then $\psi(F(x))$ is $\Sigma_1$ or $\Pi_1$ respectively.
\end{lemma}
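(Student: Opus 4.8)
The plan is to unfold the abbreviation $\psi(F(x))$ according to the convention for function application and then apply the normalization results of \autoref{Lemma: Formula-normalization} in the appropriate form, choosing the existential unfolding when $\psi$ is $\Sigma_1$ and the universal unfolding when $\psi$ is $\Pi_1$. Recall that $F$ being $\Sigma_1$-definable means there is a $\Sigma_1$-formula $\phi(x,y)$ with $\KP_1 \vdash \forall x\,\exists! y\, \phi(x,y)$, and that by the convention $\psi(F(x))$ abbreviates $\exists y[\phi(x,y)\land \psi(y)]$. Because $\phi$ provably defines a function, this is $\KP_1$-equivalent to $\forall y[\phi(x,y)\to \psi(y)]$, and the whole point is that we are free to use whichever of these two equivalent forms keeps the complexity low.

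First I would treat the case where $\psi$ is $\Sigma_1$, using the existential form $\exists y[\phi(x,y)\land \psi(y)]$. Since $\phi$ and $\psi$ are both $\Sigma_1$ (hence normalizable of the same complexity), clause (2) of \autoref{Lemma: Formula-normalization} gives that $\phi(x,y)\land \psi(y)$ is normalizable to a $\Sigma_1$-formula; prefixing the unbounded existential quantifier and invoking clause (1) of \autoref{Lemma: Formula-normalization} then yields that $\exists y[\phi\land \psi]$ is normalizable to $\Sigma_1$, as desired. For the case where $\psi$ is $\Pi_1$, I would instead use the universal form $\forall y[\phi(x,y)\to \psi(y)]$, rewriting the implication as $\lnot\phi(x,y)\lor \psi(y)$. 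Since $\phi$ is $\Sigma_1$, its negation $\lnot\phi$ is normalizable to $\Pi_1$ by clause (2); then $\lnot\phi\lor \psi$ is a disjunction of two $\Pi_1$-formulas, again normalizable to $\Pi_1$ by clause (2); and finally prefixing the unbounded universal quantifier gives a $\Pi_1$-formula by clause (1).

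The only genuine subtlety—and the step I would be most careful about—is the switch between the two unfoldings. The existential form is what the convention literally abbreviates, but in the $\Pi_1$ case it would produce an unbounded $\exists y$ in front of a $\Pi_1$-matrix, which is not $\Pi_1$ in general. The equivalence to the universal form, which holds precisely because $F$ is a \emph{provably total} function (as recorded in the remark following the definition of $\Gamma$-definable functions), is exactly what rescues the complexity bound. Everything else is a direct bookkeeping application of \autoref{Lemma: Formula-normalization}, so I anticipate no further obstacle.
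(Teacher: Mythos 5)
Your proof is correct and follows essentially the same route as the paper's: unfold $\psi(F(x))$ existentially as $\exists y[\phi(x,y)\land\psi(y)]$ in the $\Sigma_1$ case and universally as $\forall y[\phi(x,y)\to\psi(y)]$ in the $\Pi_1$ case, then normalize via \autoref{Lemma: Formula-normalization}. The subtlety you flag---that totality of $F$ is what licenses the switch to the universal unfolding---is exactly the point the paper's proof relies on (it merely spells it out less explicitly), so there is nothing to add.
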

\begin{proof}
    Suppose that $F$ is defined by a $\Sigma_1$-formula $\phi(x,y)$ and $\psi$ is $\Sigma_1$.
    $\psi(F(x))$ is equivalent to $\exists y \phi(x,y)\land \psi(F(x))$, which is normalizable to a $\Sigma_1$-formula by \autoref{Lemma: Formula-normalization}.
    Similarly, if $\psi$ is $\Pi_1$, then $\psi(F(x))$ is equivalent to $\forall y \phi(x,y)\to \psi(y)$, which is normalizable by a $\Pi_1$-formula.
\end{proof}

The following facts are standard; proofs are available in Barwise's \cite{Barwise1975Admissible}:

\begin{lemma}[\cite{Barwise1975Admissible}, Ch. 1, Theorem 4.3] \pushQED{\qed}
    $\KP_1$ proves the $\Sigma$-Reflection principle. That is, for each $\Sigma$-formula $\varphi$, $\KP_1 \vdash \varphi\leftrightarrow\exists a \varphi^a$. Put another way, each $\Sigma$-formula is $\KP_1$-provably equivalent to a $\Sigma_1$-formula.  \qedhere
\end{lemma}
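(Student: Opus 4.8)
The plan is to prove the reflection direction $\varphi \to \exists a\, \varphi^a$ by induction on the build-up of the $\Sigma$-formula (i.e.\ $\underline{\Sigma}_1$-formula) $\varphi$ from $\Delta_0$-formulas via $\land$, $\lor$, bounded quantification, and unbounded $\exists$; the converse direction and the final ``normal form'' claim will then come essentially for free. The key preliminary observation is that for any set $a$ the relativization $\varphi^a$ is $\Delta_0$: relativization turns each unbounded $\exists x$ into the bounded $\exists x\in a$, while every other quantifier in a $\Sigma$-formula is already bounded, so $\varphi^a$ has only bounded quantifiers (with $a$ as an extra parameter). Granting the reflection direction, this immediately yields both the converse and the normal-form statement, since $\exists a\,\varphi^a$ is a $\Sigma_1$-formula and $\varphi \leftrightarrow \exists a\,\varphi^a$ exhibits $\varphi$ as $\KP_1$-provably equivalent to it.

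First I would isolate a \emph{persistence lemma}: for every $\Sigma$-formula $\psi$, $\KP_1$ proves $a\subseteq b \to (\psi^a \to \psi^b)$. This is a routine induction on $\psi$, since enlarging the relativizing set only adds witnesses for the (relativized) existential quantifiers and leaves the genuinely bounded quantifiers untouched. Taking $b$ to be the universe (reading $\psi^V$ as $\psi$) gives the easy converse $\exists a\,\varphi^a \to \varphi$ as a special case. Persistence also dispatches the routine cases of the main induction: the $\Delta_0$ base case holds because $\varphi^a = \varphi$ (take $a=\varnothing$); a disjunction reflects by choosing the true disjunct and applying the inductive hypothesis; a conjunction $\psi\land\chi$ reflects by taking reflecting sets $a,b$ for the two conjuncts, forming $c = a\cup b$ (available by Pairing and Union), and invoking persistence to obtain $\psi^c \land \chi^c$; and a bounded existential $\exists x\in y\,\psi$ reflects by fixing a witness $x_0 \in y$ and applying the inductive hypothesis to $\psi(x_0)$.

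The substantive work is in the two remaining cases. For an unbounded existential $\exists x\,\psi(x)$, from a witness $x_0$ with $\psi(x_0)$ the inductive hypothesis yields $a$ with $\psi(x_0)^a$; I then pass to $c = a \cup \{x_0\}$ so that $x_0\in c$, and persistence gives $\psi(x_0)^c$, whence $(\exists x\,\psi)^c = \exists x\in c\,\psi^c$ holds. The genuine obstacle---and the only place $\Sigma_1$-Collection is used---is the bounded universal $\forall x\in y\,\psi(x)$. Here the inductive hypothesis gives $\forall x\in y\,\exists a\,\psi(x)^a$; since $\psi(x)^a$ is $\Delta_0$ (in $x,a$), the matrix $\exists a\,\psi(x)^a$ is $\Sigma_1$, so $\Sigma_1$-Collection produces a single set $b$ with $\forall x\in y\,\exists a\in b\,\psi(x)^a$. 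Setting $c = \bigcup b$, each such $a$ satisfies $a\subseteq c$, and persistence upgrades every $\psi(x)^a$ to $\psi(x)^c$, yielding $(\forall x\in y\,\psi)^c$. This is precisely the step where the reflection phenomenon is forced by Collection, and it is where I would concentrate the care in verifying that $\psi(x)^a$ has the $\Delta_0$ complexity needed to apply $\Sigma_1$-Collection legitimately.
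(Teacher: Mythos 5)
Your proof is correct, and it is essentially the standard argument for $\Sigma$-Reflection that the cited source (Barwise, Ch.~I, Theorem 4.3) gives: induction on the build-up of $\underline{\Sigma}_1$-formulas, a persistence lemma for enlarging the relativizing set, and an application of Collection to a $\Delta_0$ matrix at the bounded-universal step. The paper itself supplies no proof beyond the citation, so there is nothing further to compare; your identification of the bounded universal quantifier as the only case requiring Collection, and your check that $\varphi^a$ is $\Delta_0$ so that the Collection instance is legitimate, are exactly the points that matter.
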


\begin{lemma}[\cite{Barwise1975Admissible}, Ch. 1, Proposition 6.4.]
    \pushQED{\qed}
    Working over $\KP_1$, we have the definition by $\Sigma$-Recursion: Let $G$ be an $(m+2)$-ary $\Sigma$-definable function. Then we can find a $\Sigma$-definable function $F$ such that 
    \begin{equation*}
        F(x_0,\cdots, x_{m-1},y)=G\Big(x_0,\cdots, x_{m-1},y, \big\{\langle z, F(x_0,\cdots, x_{m-1},z)\rangle\mid z \in \TC(y)\big\}\Big),
    \end{equation*}
    where $\TC(x)$ is the transitive closure of $x$.
    
    Furthermore, the formula $z=F(x_0,\cdots,x_m,y)$ is $\Delta_1$.
\end{lemma}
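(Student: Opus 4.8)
The plan is to follow the standard method of \emph{approximations} (set-sized partial computations), adapted to the restricted induction available in $\KP_1$. First I would introduce the auxiliary predicate $\operatorname{Approx}(\vec x,f)$ expressing that $f$ is a function with transitive domain satisfying the recursion equation locally, i.e.
\begin{equation*}
\forall w\in\operatorname{dom}(f)\;\; f(w)=G\bigl(\vec x,w,\{\langle z,f(z)\rangle\mid z\in\TC(w)\}\bigr).
\end{equation*}
Since the domain is transitive, $\TC(w)\subseteq\operatorname{dom}(f)$, so the inner set is just the $\Delta_0$-separation subset $\{p\in f\mid (p)_0\in\TC(w)\}$; using that the graph of $G$ is $\Sigma$, that $\TC$ is $\Delta_1$, and the normalization results (\autoref{Lemma: Formula-normalization}, \autoref{Lemma: Sigma1-composition-complexity}) together with $\Sigma_1$-Collection, one checks that $\operatorname{Approx}$ is $\KP_1$-provably $\Delta_1$.

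Next I would prove the two pillars of the construction. \textbf{Uniqueness:} any two approximations agree on the intersection of their domains. Here the statement ``$w\in\operatorname{dom}(f)\cap\operatorname{dom}(f')\to f(w)=f'(w)$'' is $\Delta_0$ in the set parameters $f,f'$, so I can prove $\forall w$ of it by ordinary $\in$-induction, which is available since $\Delta_0\subseteq\Sigma_1\cup\Pi_1$; transitivity of the domains guarantees $\TC(w)$ lies in both domains, so the inductive hypothesis makes the two inner arguments of $G$ literally identical. \textbf{Existence:} for all $\vec x,y$ there is an approximation $f$ with $y\in\operatorname{dom}(f)$. Crucially the formula ``$\exists f(\operatorname{Approx}(\vec x,f)\land y\in\operatorname{dom}(f))$'' is $\Sigma_1$, so I may run $\in$-induction on $y$ using precisely the $\Sigma_1$-Foundation of $\KP_1$. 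In the inductive step I restrict the approximations obtained for each $z\in y$ to domain $\TC(\{z\})$, collect this family via $\Sigma_1$-Collection, and form their union $g$; uniqueness makes the family coherent, so $g$ is an approximation with $\operatorname{dom}(g)=\bigcup_{z\in y}\TC(\{z\})=\TC(y)$, and I extend it by the single pair $\langle y,G(\vec x,y,g)\rangle$ to obtain an approximation defined at $y$.

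With both pillars in hand I would \emph{define} $F(\vec x,y)=v\defiff\exists f\,(\operatorname{Approx}(\vec x,f)\land y\in\operatorname{dom}(f)\land f(y)=v)$. Existence and uniqueness show $\KP_1\vdash\forall\vec x\,\forall y\,\exists! v$, so $F$ is a total $\Sigma$-definable function, and reading off the value of an approximation at $y$ (whose restriction to $\TC(y)$ supplies the inner argument) verifies the recursion equation. Finally, for the $\Delta_1$ claim I would use the standard totality trick: the displayed graph is $\Sigma$, hence $\Sigma_1$ by $\Sigma$-reflection, while functionality and totality give the $\Pi_1$ equivalent $v=F(\vec x,y)\iff\forall w\,(w=F(\vec x,y)\to w=v)$, so the graph is $\Delta_1$.

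I expect the existence step to be the main obstacle, for two intertwined reasons. First, the whole induction is legitimate only if its matrix stays $\Sigma_1$, which forces careful bookkeeping of the complexity of $\operatorname{Approx}$ (and hence of how $\TC$ and the graph of $G$ are folded in); this is exactly where the normalization lemmas and $\Sigma_1$-Collection do the real work. Second, forming the union $\bigcup_{z\in y}f_z$ as a \emph{set} rather than a proper class depends essentially on $\Sigma_1$-Collection applied to the (uniqueness-justified) assignment $z\mapsto f_z$, and one must re-verify $\operatorname{Approx}$ for the result before extending it to $y$. A minor preliminary obligation is that $\TC$ exists and is $\Delta_1$ over $\KP_1$; I would either cite this from \cite{Barwise1975Admissible} or establish it first, since it is already presupposed in the statement.
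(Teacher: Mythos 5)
Your proposal is correct and takes essentially the same route as the paper: the paper defers the recursion theorem itself to Barwise (whose standard proof is precisely your approximation argument, with uniqueness by $\Delta_0$ $\in$-induction and existence by $\Sigma_1$-Foundation plus $\Sigma_1$-Collection) and only writes out the complexity analysis, exhibiting the graph both as $\exists f(\dots)$ and in the dual $\forall f(\dots)\to\dots$ form. Your functionality-plus-totality trick for the $\Pi_1$ side is the same duality the paper records in its remark that $\exists y[\phi(\vec{x},y)\land\psi(y)]$ is equivalent to $\forall y[\phi(\vec{x},y)\to\psi(y)]$ when $\phi$ provably defines a function.
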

\begin{proof}
    We will not prove the $\Sigma$-Recursion theorem here, and the interested reader may consult \cite{Barwise1975Admissible}. 
    
    Now let us provide the complexity analysis for $z=F(x_0,\cdots,x_m,y)$ when $F$ is recursively defined from an $(m+2)$-ary $\Sigma$-definable function $G$. For notational convenience, let us consider the case $m=1$.

    Observe that $z=F(x,y)$ is equivalent to
    \begin{multline*}
        \exists f {\big[}\text{$f$ is a function of domain $\TC(\{y\})$} \\ \land \forall u\in\TC(\{y\})(f(x,u) = G(x,y,f\upharpoonright \TC(u) ) ){\big]} \land f(x,y)=z.
    \end{multline*}

    The reader can verify that the formula $y=\TC(x)$ is $\KP_1$-provably $\Sigma_1$ (see \cite[\S I.6]{Barwise1975Admissible}), and the statement `$f$ is a function of domain $a$' is $\Delta_0$. Also, if $f$ is a function, then $g=f\upharpoonright a$ is also $\Delta_0$ since
    \begin{equation*}
        g=f\upharpoonright a \iff \forall p\in g(\pi_0(p)\in a\land p\in f) \land \forall p\in f(\pi_0(p)\in a\to p\in g),
    \end{equation*}
    where $\pi_0(p)$ is a projection function satisfying $\pi_0(\langle u,v\rangle)=u$, and the reader can see that both $\pi_0(p)=u$ and $\pi_0(p)\in a$ are $\Delta_0$.
    Hence by applying \autoref{Lemma: Sigma1-composition-complexity}, we have the following:
    \begin{multline} \label{Formula: TC definition}
        \exists f {\Big[}\LaTeXunderbrace{\text{$f$ is a function of domain $\TC(\{y\})$}}_{\Delta_1}  \\ \land \forall \LaTeXunderbrace{u\in\TC(\{y\})}_{\Delta_1}(\LaTeXunderbrace {f(x,u) = G(x,y,f\upharpoonright \TC(u) )}_{\Delta_1} ) {\Big]} \land \LaTeXunderbrace{f(x,y)=z}_{\Delta_0}.
    \end{multline}
    Hence the above formula is $\Sigma_1$. However, we can see that the \eqref{Formula: TC definition} is equivalent to
    \begin{multline*}
        \forall  f {\big[}\text{$f$ is a function of domain $\TC(\{y\})$} \\ \land \forall u\in\TC(\{y\})(f(x,u) = G(x,y,f\upharpoonright \TC(u) ) ){\big]} \to f(x,y)=z,
    \end{multline*}
    and the reader can verify the above formula is $\Pi_1$.
\end{proof}

We will use various types of recursively defined functions, and gauging their complexity is also important. One of the important functions we will use is the function $\alpha\mapsto L_\alpha$, taking an ordinal $\alpha$ and returning the $\alpha$th stage of $L$. By the previous lemma, this function is $\Delta_1$ definable, and the formula $x=L_\alpha$ is equivalent to a $\Delta_1$ formula.

Before finishing this section, let us define partial truth predicates. These play a role analogous to the role that finite Turing jumps $0^{(n)}$ of $0$ play in second-order arithmetic. 
We say that a formula is $\Sigma_n$ if it is of the form $$\exists x_0\forall x_1\cdots \mathsf{Q}x_{n-1}\phi(x_0,\cdots,x_{n-1},\vec{y})$$ for some $\Delta_0$-formula $\phi$.
\begin{lemma}\pushQED{\qed}
   $\KP_1$ proves that for every set $a$ and for every code of a $\Delta_0$-formula $\varphi(x)$, the following are equivalent:
    \begin{itemize}
        \item There is a transitive set $M$ such that $a\in M$ and $M\models \varphi(a)$.
        \item For all transitive $M$ such that $a\in M$, $M\models\varphi(a)$.
    \end{itemize}
    Furthermore, if $\varphi(x)$ is an actual $\Delta_0$-formula and $a$  is a set, $\varphi(a)$ holds iff one of the above holds. \qedhere 
\end{lemma}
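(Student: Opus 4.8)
The plan is to reduce both assertions to the fact that satisfaction of coded $\Delta_0$-formulas is absolute between transitive sets. First I would recall the setup of the relation ``$M\models\varphi(a)$'': for a set $M$, a code $\varphi$ of a $\Delta_0$-formula, and an assignment $s$ of its free variables into $M$, one defines satisfaction by $\Sigma$-recursion on the code (atomic codes are decided by the ambient $\in$ and $=$ applied to the values of $s$, Boolean codes by the evident clauses, and a bounded-quantifier code $\exists x\in t\,\psi$ by searching through the value of $t$ under $s$). By the $\Sigma$-Recursion lemma this relation is $\KP_1$-provably $\Delta_1$. The conceptual content of the statement is then the following, which I would isolate and prove on its own.

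\textbf{Absoluteness.} $\KP_1$ proves: if $M\subseteq N$ are transitive, then for every $\Delta_0$-code $\varphi$ and every assignment $s$ with $\mathrm{rng}(s)\subseteq M$ we have $M\models\varphi[s]\leftrightarrow N\models\varphi[s]$. I would prove this by $\in$-induction on the code $\varphi$, holding $M,N$ fixed. The induction formula is $\forall s\,[\mathrm{rng}(s)\subseteq M\to(M\models\varphi[s]\leftrightarrow N\models\varphi[s])]$ with $\varphi$ the induction variable; since $\models$ is $\Delta_1$, its matrix is $\Delta_1$, so the whole formula is $\Pi_1$ and the $\Sigma_1\cup\Pi_1$-Foundation of $\KP_1$ applies. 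The atomic and Boolean cases are immediate from the defining clauses and the inductive hypothesis. The crucial case is a bounded quantifier $\exists x\in t\,\psi$: the value $b$ of $t$ under $s$ lies in $M$, so by transitivity $b\subseteq M\subseteq N$; hence both $M\models(\exists x\in t\,\psi)[s]$ and $N\models(\exists x\in t\,\psi)[s]$ reduce to ``$\exists c\in b$ witnessing the satisfaction of $\psi$ under $s[x\mapsto c]$''. As $c\in b\subseteq M$, the assignment $s[x\mapsto c]$ still has range in $M$, so the inductive hypothesis for $\psi$ forces the two sides to agree. This bounded-quantifier case is the main obstacle: everything depends on the bound $b$ being the \emph{same} set in $M$ and in $N$ and on its elements lying in the smaller model, which is exactly what transitivity delivers.

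Granting Absoluteness, I would establish the equivalence of the two bullets by routing through the canonical smallest model. $\KP_1$ proves that $\TC(\{a\})$ exists and is transitive with $a\in\TC(\{a\})$, so the universal bullet is non-vacuous and in particular implies the existential one. Conversely, suppose some transitive $M_0\ni a$ satisfies $M_0\models\varphi(a)$; any transitive $N\ni a$ contains $\TC(\{a\})$ by transitivity, so applying Absoluteness first to $\TC(\{a\})\subseteq M_0$ yields $\TC(\{a\})\models\varphi(a)$, and then to $\TC(\{a\})\subseteq N$ yields $N\models\varphi(a)$. Hence the universal bullet holds.

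Finally, the ``furthermore'' clause is a schema, proved by an external induction on the actual $\Delta_0$-formula $\varphi$ (one $\KP_1$-proof per metatheoretic formula). For each such $\varphi$ one shows $\KP_1\vdash \varphi(a)\leftrightarrow(M\models\ulcorner\varphi\urcorner(a))$ for transitive $M\ni a$; the atomic and Boolean cases are trivial, and in the bounded-quantifier case $\exists x\in b\,\psi$ with $b\in M$ the transitivity of $M$ gives $b\subseteq M$, so the genuine quantifier over $V$ and the model's coded quantifier range over exactly the same sets. Combining this schema with the already-established equivalence of the two bullets gives the full statement.
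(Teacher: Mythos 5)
Your proof is correct. The paper states this lemma without proof (it is flagged as a standard fact, with the QED box placed directly in the statement), and your argument is precisely the expected one: $\Delta_0$-absoluteness between transitive sets, proved by induction on the code using the $\Delta_1$-definable satisfaction relation, with $\TC(\{a\})$ serving as the common transitive core that links any two transitive sets containing $a$, plus a metatheoretic induction for the ``furthermore'' schema. The only point worth tightening is bureaucratic: with a standard pairing-based coding, the immediate subformula of a code need not be an $\in$-element of the code but only an element of its transitive closure, so the $\in$-induction should formally be run over $\TC(\{\varphi\})$ (or on a rank/length function for codes); this does not affect the substance of the argument.
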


The previous lemma allows us to define $\vDash_{\Sigma_0}$ by a $\Delta_1$-expression:
\begin{definition}
    Define $\vDash_{\Sigma_0}$ as follows: $\vDash_{\Sigma_0}\phi(a)$ holds if there is a transitive set $M$ such that $a\in M$ and $M\models \phi(a)$.
\end{definition}

There is no reason to stop defining the partial truth predicate for more complex formulas:
\begin{definition} \label{Definition: Sigma n truth}
    Let us define $\vDash_{\Sigma_n}$ by (meta-)recursion on $n$ as follows: For a $\Pi_n$-formula $\psi(x,y)$,
    $\vDash_{\Sigma_{n+1}}\exists x\psi(x,y)$ holds iff there is $x$ such that $\nvDash_{\Sigma_n} \lnot\psi(x,y)$.
\end{definition}
It is clear that $\vDash_{\Sigma_n}$ is $\Sigma_n$-definable for $n\ge 1$. Furthermore, the above definition works over $\KP_1$ with no problem.
$\vDash_{\Sigma_1}$ behaves like a universal $\Sigma_1$-formula, and is not $\Pi_1$-definable. See \autoref{Lemma: Pi1 boundness for Ord} for a proof of this fact.

\section{Defining `reasonably definable ordinals'}\label{reasonably-definable}

In the context of arithmetic, it is typical to use primitive recursive well-orders as surrogates for ordinals. In this paper, we will work with set theories and we will deal with ordinals in a more general fashion. For instance, we will be working with ordinals that have no primitive recursive presentations. Nevertheless, we will need to restrict our attention to ordinals that are `reasonably definable'. In particular, we will focus on ordinals that are defined by \emph{primitive recursive set relations}. Inspired by \cite{Rathjen1992PrimRecSetFtn}, which defines a set function as a primitive recursive set function if and only if it is provably $\Sigma_1$ over $\KP_1$, we introduce the following definition.

\begin{definition}\label{representation}
    A formula $R(x)$ is a \emph{representation} if $R(x)$ is a $\Sigma_1$-formula such that $\KP_1$ proves 
    \begin{equation*}
        \forall x \forall y\ \Big( \big(R(x)\land R(y)\big)\to x=y\Big).
    \end{equation*}

    An ordinal $\alpha$ is \emph{representable} if there is a representation $R^\bfalpha(x)$ such that $R^\bfalpha$ defines $\alpha$ over $L$. We say that $\alpha$ is representable \emph{via} $R^\bfalpha$.
\end{definition}

For example, $\omega_1^{\mathsf{CK}}$ is representable: It is the least ordinal $\alpha$ such that $L_\alpha\models \KP$, and $\KP_1$ proves there is at most one such ordinal (although $\KP_1$ does not prove that there \emph{is} such an ordinal.) Other examples of representable ordinals are the least gap ordinal\footnote{An ordinal $\alpha$ is a gap ordinal if $L_\alpha\cap\mathcal{P}(\omega) = L_{\alpha+1}\cap\mathcal{P}(\omega)$. It is known that $(\omega, \mathcal{P}(\omega)\cap L_\alpha)$ is a model of full Second-order Arithmetic if $\alpha$ is a gap ordinal.} and the least $\alpha$ such that $L_\alpha\models \mathsf{ZFC}$.

\begin{remark}
    If $R(x)$ is a $\Sigma_1$-formula defining an element, then the following $\Pi_1$ formula also defines the same element:
    \begin{equation} \label{Formula: Pi 1 representation}
        R_*(x) \equiv \forall y \left[ R(y) \to x=y\right].
    \end{equation}
    In particular, every representable ordinal is $\KP_1$-provably $\Delta_1$-definable.
\end{remark}

\begin{remark}
    We often need to reason about representations of ordinals rather than ordinals themselves. We introduce the following conventions to aid with readability: We use boldfaced lowercase Greek letters $\bfalpha$, $\bfbeta$, $\bfgamma$, $\dots$ to denote representations for ordinals, and we handle them as actual ordinals. When we handle $\bfalpha$, $\bfbeta$, $\bfgamma$, $\cdots$ as formulas, we use the notation $R^\bfalpha$, $R^\bfbeta$, $\dots$ to avoid confusion.

    When we write $\varphi(\bfalpha)$, this is really shorthand for $\forall x \big( R^\bfalpha(x) \to \varphi(x)\big)$. Likewise, we write $\Exists(\bfalpha)$ as shorthand for the formula $\exists x R^\bfalpha(x)$. 
\end{remark}

In fact, $\omega_1^{\mathsf{CK}}$ is not only representable but also admissible.\footnote{An ordinal $\alpha$ is admissible if $L_\alpha$ is a model of $\KP$.} Even more, $\KP_1$ proves that it is admissible. The following notion encapsulates this feature of $\omega_1^{\mathsf{CK}}$.
\begin{definition}
    We say that a representation $\bfalpha$ is \emph{$\KP_1$-provably admissible representation} if $\bfalpha$ satisfies
    \begin{equation*}
        \KP_1\vdash \forall \xi [R^\bfalpha(\xi)\to \text{$\xi$ is admissible}].
    \end{equation*}
    The corresponding ordinal $\alpha$ of $\bfalpha$ is called a \emph{$\KP_1$-provably admissible representable ordinal}.
\end{definition}
\emph{Throughout the rest of this paper, we will always assume that $\bfalpha$ is a $\KP_1$-provably admissible representation and that the metatheory satisfies $\Exists(\bfalpha)$.}

We will introduce an $\bfalpha$-analogue of the Kleene normal form theorem. Here the existence of a representable ordinal is analogous to the well-foundedness of a given primitive recursive linear order.

\begin{lemma}\label{Lemma: ordinal-Kleene}
    Let $\exists x A(x)$ be a $\Sigma_1$ sentence. Then there is a representation $\bfgamma$ such that $\KP_1 + (V=L)$ proves the following:
    \begin{itemize}
        \item $\exists x A(x)\leftrightarrow \Exists(\bfgamma)$
        \item $\exists x A(x)\to \forall \xi \big(R^\bfgamma(\xi) \to \exists x\in L_\xi A(x) \big)$
    \end{itemize}
\end{lemma}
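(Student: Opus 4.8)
The plan is to let $\bfgamma$ denote the least level of the constructible hierarchy at which a witness for $A$ appears. After contracting the existential block of a $\Sigma_1$ sentence into a single quantifier via \autoref{Lemma: TwoQuantifier-reducing}, I may assume that $\exists x\,A(x)$ has $A$ a $\Delta_0$ formula. Abbreviating $\theta(\zeta) :\equiv \exists x\in L_\zeta\,A(x)$, I define
\[
  R^\bfgamma(\xi)\ :\equiv\ \text{``$\xi$ is an ordinal''}\ \land\ \theta(\xi)\ \land\ \forall\zeta\in\xi\,\lnot\theta(\zeta),
\]
so that $R^\bfgamma$ asserts that $\xi$ is least with $L_\xi$ containing a witness. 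Uniqueness is immediate from the linearity of the ordinals, which $\KP_1$ proves: if $\xi,\xi'$ both satisfy $R^\bfgamma$ and $\xi\in\xi'$, then $\theta(\xi)$ contradicts the minimality clause for $\xi'$; hence $\xi=\xi'$, and $R^\bfgamma$ is a representation provided it is $\Sigma_1$.

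Verifying that $R^\bfgamma$ is $\Sigma_1$ is the crux, since the minimality clause reads naively as $\Pi_1$ and would leave $R^\bfgamma$ merely a conjunction of a $\Sigma_1$ and a $\Pi_1$ formula. The key point is that $\theta$ is in fact $\Delta_1$. Recall that $\zeta\mapsto L_\zeta$ is $\Delta_1$-definable, so $z=L_\zeta$ has both a $\Sigma_1$ and a $\Pi_1$ form. On the one hand $\theta(\zeta)\leftrightarrow\exists z\,(z=L_\zeta\land\exists x\in z\,A(x))$ is $\Sigma_1$, since $\exists x\in z\,A(x)$ is $\Delta_0$. On the other hand $\theta(\zeta)\leftrightarrow\forall z\,(z=L_\zeta\to\exists x\in z\,A(x))$, and writing the hypothesis $z=L_\zeta$ in its $\Sigma_1$ form makes $\lnot(z=L_\zeta)\lor\exists x\in z\,A(x)$ a $\Pi_1$ matrix, so this form is $\Pi_1$. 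Thus $\theta$ is $\Delta_1$ and $\lnot\theta$ is $\Sigma_1$. Consequently the minimality clause $\forall\zeta\in\xi\,\lnot\theta(\zeta)$, being a bounded universal quantifier over a $\Sigma_1$ formula, is itself $\Sigma_1$ by $\Sigma_1$-Collection (\autoref{Lemma: Formula-normalization}(3)); it is also plainly $\Pi_1$. Hence $R^\bfgamma$ is a conjunction of $\Delta_1$ formulas, so $\Sigma_1$, as required.

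Finally I would verify the two bullets over $\KP_1+(V=L)$. For the first, if $\exists x\,A(x)$ then, since $V=L$, a witness lies in some $L_\xi$, so $\theta(\xi)$ holds; applying $\Pi_1$-Foundation (in the form of $\in$-induction) to the $\Pi_1$ formula $\lnot\theta$ shows that the nonempty class $\{\xi:\theta(\xi)\}$ has a least element, which witnesses $\Exists(\bfgamma)$. Conversely $\Exists(\bfgamma)$ yields some $\xi$ with $\theta(\xi)$, hence $\exists x\,A(x)$. The second bullet is immediate, since $R^\bfgamma(\xi)$ already contains the conjunct $\theta(\xi)=\exists x\in L_\xi\,A(x)$. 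The only genuine obstacle is the complexity bookkeeping of the previous paragraph: keeping the minimality clause inside $\Sigma_1$ depends essentially on the $\Delta_1$-definability of the $L$-hierarchy together with $\Sigma_1$-Collection, and this is exactly what forces the argument to route through $\theta$ being $\Delta_1$ rather than merely $\Sigma_1$.
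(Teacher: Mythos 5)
Your proposal is correct and follows essentially the same route as the paper: the same representation (``least $\xi$ such that $L_\xi$ contains a witness''), with the complexity bookkeeping handled by unpacking the hidden quantifiers of $z=L_\zeta$ via its $\Delta_1$-definability and then absorbing the bounded universal quantifier using $\Sigma_1$-Collection. You in fact supply more detail than the paper does on uniqueness and on verifying the two bullets (via $\Pi_1$-Foundation), which the paper leaves implicit.
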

\begin{proof}
     Consider $R^\star(\xi)$ given by
    \begin{equation*}
        \text{$\xi$ is an ordinal}\land \exists x\in L_\xi A(x)\land \forall \eta<\xi \; \forall x\in L_\eta \lnot A(x).
    \end{equation*}
    Intuitively, $R^\star(\xi)$ says $\xi$ is the least ordinal such that $L_\xi$ captures a witness of $\exists x A(x)$. 
   
    We need to see that the above formula is $\KP_1$-provably $\Sigma_1$. At first glance, the second conjunct may appear $\Delta_0$, but note that the bounded quantifier is bounded in $L_\xi$, which is defined by $\Sigma$ recursion. That is, the second conjunct contains hidden quantifiers. We can precisely state the second conjunct as follows, which is $\KP_1$-provably equivalent to $\Sigma_1$:
    \begin{equation*}
        \exists a \left[ a = L_\xi \land \left(\exists x\in a A(x)\right)\right].
    \end{equation*}
    The third conjunct of $R^\star(\xi)$ also has the same problem. But the precise statement of the third conjunct is also $\KP_1$-provably equivalent to a $\Sigma_1$:
    \begin{equation*}
        \forall \eta<\xi \exists a [a=L_\eta \land \forall x\in a \lnot A(x)]
    \end{equation*}

Hence if we let $R^\bfgamma$ obtained from $R^\star$ by replacing the two conjuncts to their $\Sigma_1$-equivalent form respectively, then they satisfy the desired properties.
\end{proof}

By relativizing the above lemma to $L_\alpha$ for an admissible $\alpha$, we get the following:
\begin{corollary}\label{Lemma: alpha-Kleene} \pushQED{\qed}
    Let $\exists x A(x)$ be a $\Sigma_1$ sentence. Then there is a representation $\bfgamma$ such that $\KP_1$ proves that for every admissible ordinal $\alpha$, the following holds:
    \begin{itemize}
        \item $\exists x\in L_\alpha A(x)\leftrightarrow \Exists(\bfgamma)^{L_\alpha}$.
        \item $\exists x\in L_\alpha A(x)\to \forall \xi \big(R^\bfgamma(\xi) \to \exists x\in L_\xi A(x) \big)$. 
    \end{itemize}
    Furthermore, the choice of $\bfgamma$ only depends on the formula $\exists x A(x)$. \qedhere 
\end{corollary}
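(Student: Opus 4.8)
The plan is to reuse verbatim the representation $R^\bfgamma$ produced in \autoref{Lemma: ordinal-Kleene}, since it was built uniformly from the formula $A$; this immediately secures the final ``Furthermore'' clause, and recall that $\KP_1$ proves $R^\bfgamma(\xi)$ equivalent to the formula $R^\star(\xi)$ asserting that $\xi$ is the least ordinal with $\exists x\in L_\xi\,A(x)$. The underlying idea is that $L_\alpha\models \KP_1+(V=L)$ whenever $\alpha$ is admissible, so the content of \autoref{Lemma: ordinal-Kleene} relativizes to $L_\alpha$; the real work is then to transfer between $L_\alpha$-internal and external statements by absoluteness. The second bullet requires essentially no work: the middle conjunct of $R^\star$, hence of $R^\bfgamma$, is literally $\exists x\in L_\xi\,A(x)$, so $\KP_1\vdash \forall\xi\,(R^\bfgamma(\xi)\to\exists x\in L_\xi\,A(x))$ outright, in particular under the hypothesis $\exists x\in L_\alpha\,A(x)$. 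The whole content therefore lies in the first bullet, which I would prove inside $\KP_1$ with $\alpha$ a fixed admissible ordinal.

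First I would assemble the absoluteness facts for the inner model $L_\alpha$. Because $\alpha$ is admissible we have $L_\alpha=\bigcup_{\xi<\alpha}L_\xi$, and for every $\xi<\alpha$ the stage $L_\xi$ is an element of $L_\alpha$ with $(L_\xi)^{L_\alpha}=L_\xi$; this is exactly the $\Delta_1$-definability of $\xi\mapsto L_\xi$ recorded earlier, combined with $\Delta_1$-absoluteness between the two $\KP_1$-models $L_\alpha$ and $V$ (upward persistence of the $\Sigma_1$ form together with downward persistence of the $\Pi_1$ form). Since $A$ is $\Delta_0$ it is absolute. Feeding these facts through the three conjuncts of $R^\star$ one at a time, I would obtain, for each $\xi\in L_\alpha$, that $(R^\bfgamma(\xi))^{L_\alpha}$ holds if and only if $\xi<\alpha$ is the least ordinal with $\exists x\in L_\xi\,A(x)$. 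The hidden $L$-hierarchy quantifiers inside the second and third conjuncts are precisely where admissibility is used, since it guarantees that $L_\alpha$ can form each $L_\xi$ and $L_\eta$ for $\xi,\eta<\alpha$ and evaluate these conjuncts correctly.

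With this computation of $(R^\bfgamma)^{L_\alpha}$ in hand, both directions of $\exists x\in L_\alpha\,A(x)\leftrightarrow \Exists(\bfgamma)^{L_\alpha}$ follow. For the direction from right to left, $\Exists(\bfgamma)^{L_\alpha}$ produces some $\xi<\alpha$ with $\exists x\in L_\xi\,A(x)$, and $L_\xi\subseteq L_\alpha$ delivers the witness in $L_\alpha$. For the converse, a witness $x_0\in L_\alpha$ lies in some $L_\xi$ with $\xi<\alpha$; I would then minimize, using that $\KP_1$ proves every nonempty $\Sigma_1$-definable class of ordinals has a least element (apply $\in$-induction to the $\Pi_1$ formula $\lnot(\text{``$\eta$ is an ordinal''}\land\exists x\in L_\eta\,A(x))$), to obtain the least such ordinal $\xi_0$, which satisfies $\xi_0<\alpha$; absoluteness then shows that $L_\alpha$ also regards $\xi_0$ as least, so $(R^\bfgamma(\xi_0))^{L_\alpha}$ holds and hence $\Exists(\bfgamma)^{L_\alpha}$. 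The main obstacle I anticipate is exactly the bookkeeping of the previous paragraph: because $L_\xi$ is only $\Sigma$-recursively defined, the conjuncts of $R^\star$ carry hidden unbounded quantifiers, and one must check that their relativizations to $L_\alpha$ coincide with their true meaning. This is where admissibility of $\alpha$ does the real work, and it is the only place the hypothesis is used.
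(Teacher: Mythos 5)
Your proposal is correct and follows the paper's intended route: the paper gives no separate proof of this corollary, justifying it with the single remark that one relativizes \autoref{Lemma: ordinal-Kleene} to $L_\alpha$ using the fact that $L_\alpha\models\KP_1+(V=L)$ for admissible $\alpha$, which is exactly the strategy you carry out (with the absoluteness of the $\Delta_1$ graph of $\xi\mapsto L_\xi$ and the $\Pi_1$-foundation minimization spelled out in more detail than the paper bothers to). Your observation that the second bullet is outright provable from the second conjunct of $R^\bfgamma$, independently of the hypothesis, is also consistent with how the original lemma's proof treats it.
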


\begin{definition}
    We say that $T$ is $\Sigma^\bfalpha_1$-sound if for all $\varphi \in \Sigma_1$:
    \begin{equation*}
        T\vdash \varphi^{L_\bfalpha} \implies L_\bfalpha \vDash \varphi.
    \end{equation*}
\end{definition}

Now we introduce a notion that resembles the proof-theoretic ordinal of a theory.
\begin{definition}
    Let $\bfalpha$ be $\KP_1$-provably admissible representation. Let $T$ be a $\Sigma^\bfalpha_1$-sound theory extending $\KP_1+\Exists(\bfalpha)$ Then we define: 
    \begin{equation*}
        |T|_{\bfalpha\barrepr} := \sup\{\beta\mid \text{for some $\bfbeta$, $\beta$ is representable via $\bfbeta$ and } T\vdash \Exists(\bfbeta)^{L_\bfalpha}\}.
    \end{equation*}
\end{definition}

Let us start with this property:

\begin{lemma}\label{Lemma: limit-case}
   For all $\beta<|T|_{\bfalpha\barrepr}$, there exists a representable ordinal $\gamma$ represented by $\bfgamma$ such that $\beta<\gamma<|T|_{\bfalpha\barrepr}$ and $T\vdash \Exists(\bfgamma)^{L_\bfalpha}$. Especially, $|T|_{\bfalpha\barrepr}$ is a limit of representable ordinals.
\end{lemma}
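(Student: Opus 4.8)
The plan is to produce the required $\gamma$ by locating a representable ordinal just above $\beta$ in the set defining $|T|_{\bfalpha\barrepr}$ and then showing this set is closed under successor, which simultaneously delivers the strict upper bound $\gamma<|T|_{\bfalpha\barrepr}$. Let $S$ denote the set whose supremum defines $|T|_{\bfalpha\barrepr}$, i.e.\ the collection of representable ordinals $\eta$ admitting some representation $R$ with $T\vdash(\exists x\,R(x))^{L_\bfalpha}$. Since $\beta<|T|_{\bfalpha\barrepr}=\sup S$, the definition of supremum yields a representable $\delta\in S$, via a representation $\bfdelta$, with $\beta<\delta$ and $T\vdash\Exists(\bfdelta)^{L_\bfalpha}$. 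I will take $\gamma:=\delta$ and $\bfgamma:=\bfdelta$; then $\beta<\gamma$ and $T\vdash\Exists(\bfgamma)^{L_\bfalpha}$ hold immediately, so the only remaining task is to verify the strict inequality $\delta<|T|_{\bfalpha\barrepr}$.

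To this end I establish a closure-under-successor claim: $\delta+1$ is representable and $\delta+1\in S$. For representability, I would define a representation $\bfdelta^+$ of $\delta+1$ by
\[
R^{\bfdelta^+}(x)\;\equiv\;\exists y\,\bigl(R^{\bfdelta}(y)\land x=y\cup\{y\}\bigr).
\]
Since $x=y\cup\{y\}$ is $\Delta_0$ and $R^{\bfdelta}$ is $\Sigma_1$, \autoref{Lemma: Formula-normalization} shows $R^{\bfdelta^+}$ is $\KP_1$-provably equivalent to a $\Sigma_1$-formula; uniqueness of its witness follows from uniqueness for $R^{\bfdelta}$; and over $L$ it defines $\delta\cup\{\delta\}=\delta+1$. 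Thus $\bfdelta^+$ is a genuine representation of $\delta+1$.

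To see $\delta+1\in S$ I reason inside $T$. Because $\bfalpha$ is a $\KP_1$-provably admissible representation and $T\supseteq\KP_1+\Exists(\bfalpha)$, $T$ proves $L_\bfalpha\models\KP$; combined with $T\vdash\Exists(\bfdelta)^{L_\bfalpha}$ this furnishes a witness $d\in L_\bfalpha$ with $L_\bfalpha\models R^{\bfdelta}(d)$. As $L_\bfalpha$ satisfies Pairing and Union, $d\cup\{d\}\in L_\bfalpha$, and taking $y=d$ witnesses $L_\bfalpha\models R^{\bfdelta^+}(d\cup\{d\})$. Hence $T\vdash\Exists(\bfdelta^+)^{L_\bfalpha}$, so $\delta+1\in S$ and therefore $\delta<\delta+1\le\sup S=|T|_{\bfalpha\barrepr}$, completing the verification. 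The concluding clause is then immediate: the main statement shows the representable ordinals are cofinal below $|T|_{\bfalpha\barrepr}$, so it is a supremum of representable ordinals; it cannot be a successor $\mu+1$ (taking $\beta=\mu$ would demand a $\gamma$ with $\mu<\gamma<\mu+1$) and it is nonzero (e.g.\ $0$, represented by $x=\varnothing$, lies in $S$), so $|T|_{\bfalpha\barrepr}$ is a genuine limit of representable ordinals.

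The only genuinely delicate step is the internal-to-$T$ verification that $T\vdash\Exists(\bfdelta^+)^{L_\bfalpha}$: here one must extract real content from the $\KP_1$-provable admissibility of $\bfalpha$, namely that $T$ proves $L_\bfalpha$ is closed under the successor operation, and one must check that the $\Delta_0$ description $x=y\cup\{y\}$ is absolute between $L_\bfalpha$ and $V$ so that the element $L_\bfalpha$ computes as the witness really is the successor. By contrast, the complexity bookkeeping showing $R^{\bfdelta^+}$ is $\Sigma_1$ is routine given \autoref{Lemma: Formula-normalization}.
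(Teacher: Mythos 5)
Your proof is correct and follows essentially the same route as the paper: pick a member of the defining set strictly above $\beta$ and show that set is closed under successor via an explicit $\Sigma_1$ representation of the successor ordinal. If anything you are more careful than the paper's own write-up, both in using the directly $\Sigma_1$ form $\exists y\,(R^{\bfdelta}(y)\land x=y\cup\{y\})$ for the successor representation and in spelling out why membership of $\delta+1$ in the set yields the \emph{strict} inequality $\gamma<|T|_{\bfalpha\barrepr}$, a step the paper leaves implicit.
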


\begin{proof}
    Suppose that $\beta<|T|_{\bfalpha\barrepr}$, and without loss of generality, assume that $\beta$ is representable by $\bfbeta$ and $T\vdash\Exists(\bfbeta)^{L_\bfalpha}$. Define a new representation $\boldsymbol{\beta+1}$ by

    \begin{equation*}
        R^{\boldsymbol{\beta+1}}(\xi) \iff \forall\eta\in\xi [R^\bfbeta_*(\eta)\to \xi=\eta\cup\{\eta\}],
    \end{equation*}
    where $R^\bfbeta_*(x)$ is a $\Pi_1$-formula given from $R^\bfbeta$ as presented in \eqref{Formula: Pi 1 representation}.
    
    It is clear that $\KP_1$ proves these two are equivalent, and an ordinal satisfying $R^{\boldsymbol{\beta+1}}(\xi)$ is unique if it exists. Furthermore, $T$ proves $\exists \xi R^{\boldsymbol{\beta+1}}_\Sigma(\xi)$, so $\beta+1<|T|_{\bfalpha\barrepr}$. Thus $|T|_{\bfalpha\barrepr}$ is a limit of representable ordinals.
\end{proof}

\begin{remark}
    Let us remark that we could prove that $|T|_{\bfalpha\barrepr}$ is closed under addition and multiplication, but this would require a more sophisticated argument.
\end{remark}

Finally, it turns out that the new ordinal coincides with a notion that we have already discussed.
\begin{lemma}\label{Lemma: equality-invariants}
    Let $\bfalpha$ be a $\KP_1$-provably admissible representation and $T$ be a $\Sigma^\bfalpha_1$-sound theory extending $\KP_1 +  \Exists(\bfalpha)$. Then $|T|_{\bfalpha\barrepr}=|T|_{\Sigma^\bfalpha_1}$.
\end{lemma}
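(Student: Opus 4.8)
The plan is to establish the two inequalities $|T|_{\bfalpha\barrepr} \le |T|_{\Sigma^\bfalpha_1}$ and $|T|_{\Sigma^\bfalpha_1} \le |T|_{\bfalpha\barrepr}$ separately. Throughout I would use two facts: that $\Sigma_1$ formulas are upward persistent along the constructible hierarchy (a $\Sigma_1$ statement holding in some $L_\xi$ holds in $V=L$, and conversely persists upward across $L$-levels), and that a representation $R^\bfbeta$ pins down its ordinal uniquely in $V$. I would also record at the outset that $|T|_{\Sigma^\bfalpha_1}$ is well-defined and at most $\alpha$: by $\Sigma^\bfalpha_1$-soundness, every $\Sigma_1$ sentence $\varphi$ with $T \vdash \varphi^{L_\bfalpha}$ already satisfies $L_\bfalpha \vDash \varphi$, so $\beta=\alpha$ witnesses the defining condition. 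Since that condition is upward closed in $\beta$ (again by $\Sigma_1$-persistence), to prove $|T|_{\Sigma^\bfalpha_1} \le \delta$ it suffices to verify the condition at $\beta = \delta$.

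For the first inequality, I would take any $\beta$ representable via $\bfbeta$ with $T \vdash \Exists(\bfbeta)^{L_\bfalpha}$ and show $\beta < |T|_{\Sigma^\bfalpha_1}$. Since $\Exists(\bfbeta) \equiv \exists x\, R^\bfbeta(x)$ is a $\Sigma_1$ sentence, the defining property of $|T|_{\Sigma^\bfalpha_1}$ gives $L_{|T|_{\Sigma^\bfalpha_1}} \vDash \Exists(\bfbeta)$. Letting $\xi \in L_{|T|_{\Sigma^\bfalpha_1}}$ be a witness, upward persistence yields $R^\bfbeta(\xi)$ in $V$, and uniqueness of representations forces $\xi = \beta$. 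Hence $\beta \in L_{|T|_{\Sigma^\bfalpha_1}}$, i.e. $\beta < |T|_{\Sigma^\bfalpha_1}$. As every element of the set defining $|T|_{\bfalpha\barrepr}$ lies strictly below $|T|_{\Sigma^\bfalpha_1}$, the supremum satisfies $|T|_{\bfalpha\barrepr} \le |T|_{\Sigma^\bfalpha_1}$.

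For the reverse inequality, by the opening remark it suffices to check that $\delta := |T|_{\bfalpha\barrepr}$ satisfies the defining property of $|T|_{\Sigma^\bfalpha_1}$. Fix a $\Sigma_1$ sentence $\varphi$ with $T \vdash \varphi^{L_\bfalpha}$; writing it as $\varphi \equiv \exists x\, A(x)$ with $A \in \Delta_0$, we have $\varphi^{L_\bfalpha} \equiv \exists x \in L_\bfalpha\, A(x)$ and so $T \vdash \exists x \in L_\bfalpha\, A(x)$. Applying \autoref{Lemma: alpha-Kleene} to $\exists x A(x)$, and instantiating its admissible ordinal with $\bfalpha$ (which $\KP_1$ proves admissible), produces a representation $\bfgamma$ with $\KP_1 \vdash \big(\exists x \in L_\bfalpha A(x) \leftrightarrow \Exists(\bfgamma)^{L_\bfalpha}\big)$; hence $T \vdash \Exists(\bfgamma)^{L_\bfalpha}$. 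By $\Sigma^\bfalpha_1$-soundness $L_\bfalpha \vDash \Exists(\bfgamma)$, and by persistence and uniqueness the true ordinal $\gamma$ represented by $\bfgamma$ satisfies $\gamma < \alpha$ and belongs to the set defining $|T|_{\bfalpha\barrepr}$, so $\gamma \le \delta$. It then remains to see $L_\gamma \vDash \varphi$, and this is where the explicit shape of $\bfgamma$ from \autoref{Lemma: ordinal-Kleene} matters: $R^\bfgamma$ is $\KP_1$-provably equivalent to the formula asserting that $\gamma$ is least with $\exists x \in L_\gamma A(x)$, so $R^\bfgamma(\gamma)$ directly entails $\exists x \in L_\gamma A(x)$. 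Since $R^\bfgamma(\gamma)$ holds in $V=L$, we obtain $L_\gamma \vDash \varphi$; as $\gamma \le \delta$ and $A$ is $\Delta_0$ (hence absolute), the witness already lies in $L_\delta$, giving $L_\delta \vDash \varphi$. This verifies the defining property at $\delta$ and yields $|T|_{\Sigma^\bfalpha_1} \le |T|_{\bfalpha\barrepr}$, completing the proof.

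The step I expect to be the main obstacle is the reverse inequality, specifically the passage from the \emph{provable} statement $T \vdash \Exists(\bfgamma)^{L_\bfalpha}$ to the \emph{true} fact $L_\gamma \vDash \varphi$. The delicate point is to route the argument through the $\bfalpha$-analogue of the Kleene normal form (\autoref{Lemma: alpha-Kleene}), so that the witnessing ordinal is exactly the one whose defining representation literally encodes the existence of a witness of $\varphi$ below it; keeping careful track of where $\Sigma^\bfalpha_1$-soundness, $\Sigma_1$-persistence, and the $V=L$ assumption are invoked is the crux of the argument.
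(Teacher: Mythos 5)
Your proof is correct and follows essentially the same route as the paper's: both directions hinge on upward persistence of $\Sigma_1$ formulas, the uniqueness clause of representations, $\Sigma^\bfalpha_1$-soundness, and the normal-form representation from \autoref{Lemma: ordinal-Kleene}/\autoref{Lemma: alpha-Kleene}. The only differences are cosmetic — you argue the second inequality directly by verifying the defining condition at $|T|_{\bfalpha\barrepr}$ where the paper argues contrapositively from an arbitrary $\gamma<|T|_{\Sigma^\bfalpha_1}$, and you bypass the paper's appeal to \autoref{Lemma: limit-case} in the first inequality by working with members of the defining set rather than ordinals below the supremum.
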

\begin{proof}
    We prove $|T|_{\bfalpha\barrepr}\le |T|_{\Sigma^\bfalpha_1}$ first.
    Suppose that $\beta$ is an ordinal and $\beta<|T|_{\bfalpha\barrepr}$. 
    We claim that $\beta<|T|_{\Sigma^\bfalpha_1}$.
    
    Since $\beta<|T|_{\bfalpha\barrepr}$, \autoref{Lemma: limit-case} entails that there is a representable $\gamma\in(\beta,|T|_{\bfalpha\barrepr})$ such that $T$ proves $\Exists(\bfgamma)^{L_\bfalpha}$.
    Note that $\Exists(\bfgamma)^{L_\bfalpha}$ is a $\Sigma^\bfalpha_1$ formula, so by the definition of $|T|_{\Sigma^\bfalpha_1}$, we have $L_{|T|_{\Sigma^\bfalpha_1}}\models \exists \xi R^\bfgamma(\xi)$. That is, we have the following:
    \begin{equation*}\tag{$A$}
        L \models \exists \xi<|T|_{\Sigma^\bfalpha_1}\ (R^\bfgamma_\Sigma(\xi))^{L_{|T|_{\Sigma^\bfalpha_1}}}.
    \end{equation*}
    By upward absoluteness, ($A$) implies that there exists $\xi<|T|_{\Sigma^\bfalpha_1}$ such that $ L_\alpha\models R^\bfgamma(\xi)$ and so $\gamma<|T|_{\Sigma^\bfalpha_1}$.
    Hence $\beta < \gamma < |T|_{\Sigma_1^\bfalpha}$.

    Now let us prove $|T|_{\Sigma^\bfalpha_1}\le |T|_{\bfalpha\barrepr}$.
    Suppose that $\gamma < |T|_{\Sigma^\bfalpha_1}$. Then there is a $\Delta_0$-formula $A(x)$ such that 
    \begin{equation*}
        T\vdash \exists x\in L_\bfalpha A(x) \quad\text{but}\quad L_\gamma\nvDash \exists x A(x).
    \end{equation*}

    By \autoref{Lemma: ordinal-Kleene}, we can find a representation $\bfdelta$ such that $\KP_1 + (V=L)$ proves the following:
    \begin{enumerate}
        \item[(C1)] $\exists x A(x)\leftrightarrow \Exists(\bfdelta)$.
        \item[(C2)] $\exists x A(x)\to \exists x\in L_\bfdelta A(x)$.
    \end{enumerate}

    Then we have $T\vdash \Exists(\bfdelta)^{L_\bfalpha}$: Reasoning over $T$, we have $\exists x\in L_\bfalpha A(x)$ by  assumption. Since $L_\bfalpha$ is a model of $\KP_1 + (V=L)$, we have
    \begin{equation*}
        L_\bfalpha \vDash \exists x A(x)\leftrightarrow \Exists(\bfdelta),
    \end{equation*}
    so we get $\Exists(\bfdelta)^{L_\bfalpha}$. 
    Reasoning externally, since $T$ is $\Sigma_1^\bfalpha$-sound and $\Exists(\bfdelta)^{L_\bfalpha}$ is $\Sigma_1^\bfalpha$, we have that $\bfdelta$ exists over the metatheory.
    Furthermore, we have $\bfdelta < |T|_{\bfalpha\barrepr}$.
    
    Now let us prove $\gamma < \bfdelta$: 
    We know that $\Exists(\bfdelta)^{L_\bfalpha}$ is true.
    Since $L_\bfalpha$ is a model of $\KP_1 + (V=L)$, we have
    \begin{equation*}
        L_\bfalpha \vDash \exists x A(x)\to \exists x\in L_\bfdelta A(x).
    \end{equation*}
    This implies $\exists x\in L_\bfdelta A(x)$ is true. If $\gamma\ge \bfdelta$, then $L_\gamma\vDash \exists x A(x)$ by upward absoluteness, a contradiction. Hence $\gamma < \bfdelta$.
\end{proof}

\section{Kreiselian relations}\label{kreisel-relations}

One can interpret $\bfalpha$ ordinal analysis as a means of partitioning theories: Identify two theories if they have the same $\bfalpha$ proof-theoretic ordinal. In this section, we demonstrate the robustness of this partition. Indeed, the $\bfalpha$ ordinal analysis partition is the finest partition satisfying some very natural properties. To explicate these properties, we first introduce some definitions.

\begin{definition}
    For theories $T$ and $U$ and any syntactic complexity class $\Gamma$, $T\subseteq_\Gamma U$ if, for every $\varphi \in \Gamma$ such that $T\vdash \varphi$, $U\vdash \varphi$. $T\equiv_\Gamma U$ if both $T\subseteq_\Gamma U$ and $U\subseteq_\Gamma T.$
\end{definition}

\begin{definition}\label{a-Kreis}
    An equivalence relation $\equiv$ on $\Sigma^\bfalpha_1$-sound and $\Pi^\bfalpha_1$-definable extensions of $\mathsf{KP}_1+\Exists(\bfalpha)$ is \emph{$\bfalpha$-Kreiselian} if both:
    \begin{enumerate}
        \item $T\equiv_{\Sigma^\bfalpha_1}U$ implies $T\equiv U$.
        \item $T\equiv T+\theta$ for every $\Pi^\bfalpha_1$ sentence $\theta$.
    \end{enumerate}
\end{definition}

The motivating example of an $\bfalpha$-Kreiselian partition is the aforementioned $\bfalpha$ ordinal analysis partition.
\begin{definition}
    For representable $\bfalpha$, the \emph{$\bfalpha$-ordinal analysis partition} is the partition of  $\Pi^\bfalpha_1$-definable $\Sigma^\bfalpha_1$-sound extensions of $\KP_1 +\Exists(\bfalpha)$ such that the cell of $T$ is $\{ U : |T|_{\Sigma^\bfalpha_1}=|U|_{\Sigma^\bfalpha_1} \}$.
\end{definition}

\begin{proposition}
    The $\bfalpha$-ordinal analysis partition is $\bfalpha$-Kreiselian.
\end{proposition}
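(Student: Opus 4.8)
The plan is to verify the two defining clauses of an $\bfalpha$-Kreiselian equivalence relation for the relation $\equiv$ given by $T\equiv U \defiff |T|_{\Sigma^\bfalpha_1}=|U|_{\Sigma^\bfalpha_1}$. The key structural observation, which drives both clauses, is that $|T|_{\Sigma^\bfalpha_1}$ depends only on the set of $\Sigma^\bfalpha_1$ sentences that $T$ proves: since $\varphi^{L_\bfalpha}$ ranges exactly over $\Sigma^\bfalpha_1$ sentences as $\varphi$ ranges over $\Sigma_1$, the set $\{\gamma : \forall\varphi\in\Sigma_1\,(T\vdash\varphi^{L_\bfalpha}\to L_\gamma\vDash\varphi)\}$ whose minimum defines $|T|_{\Sigma^\bfalpha_1}$ is determined by $\{\varphi\in\Sigma_1 : T\vdash\varphi^{L_\bfalpha}\}$. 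Clause (1) is then immediate: if $T\equiv_{\Sigma^\bfalpha_1}U$, then $T$ and $U$ prove the same $\Sigma^\bfalpha_1$ sentences, so these two defining sets coincide and $|T|_{\Sigma^\bfalpha_1}=|U|_{\Sigma^\bfalpha_1}$.

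For clause (2), fix a $\Pi^\bfalpha_1$ sentence $\theta$ such that $T+\theta$ lies in the domain, i.e. is $\Sigma^\bfalpha_1$-sound, and write $\theta=\psi^{L_\bfalpha}$ for some $\psi\in\Pi_1$. One inequality is pure monotonicity: since every $\Sigma^\bfalpha_1$ theorem of $T$ is a $\Sigma^\bfalpha_1$ theorem of $T+\theta$, the defining set for $T+\theta$ is contained in that for $T$, so $|T|_{\Sigma^\bfalpha_1}\le |T+\theta|_{\Sigma^\bfalpha_1}$. For the reverse inequality I would first record two facts. First, $\beta:=|T|_{\Sigma^\bfalpha_1}\le\alpha$: if $T\vdash\varphi^{L_\bfalpha}$ with $\varphi\in\Sigma_1$, then $\Sigma^\bfalpha_1$-soundness of $T$ gives $L_\bfalpha\vDash\varphi$, i.e. $L_\alpha\vDash\varphi$, so $\alpha$ itself belongs to the defining set for $T$. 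Second, by the deduction theorem $T+\theta\vdash\varphi^{L_\bfalpha}$ is equivalent to $T\vdash(\psi\to\varphi)^{L_\bfalpha}$, and $\psi\to\varphi$ is $\KP_1$-provably equivalent to a $\Sigma_1$ formula, being the disjunction $\neg\psi\lor\varphi$ of two $\Sigma_1$ formulas and hence normalizable by \autoref{Lemma: Formula-normalization}.

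Now I would argue as follows. Suppose $T+\theta\vdash\varphi^{L_\bfalpha}$ with $\varphi\in\Sigma_1$; the goal is $L_\beta\vDash\varphi$. By the reformulation $T\vdash(\psi\to\varphi)^{L_\bfalpha}$, so by the definition of $\beta$ we get $L_\beta\vDash\psi\to\varphi$. The crux is then to produce $L_\beta\vDash\psi$, for then $L_\beta\vDash\varphi$ follows, yielding $|T+\theta|_{\Sigma^\bfalpha_1}\le\beta$ and completing the equality. Here I use that $\theta$ is true, so that $L_\alpha\vDash\psi$; since $\psi$ is $\Pi_1$ and $\Pi_1$ formulas are downward absolute between transitive sets, and $L_\beta\subseteq L_\alpha$ because $\beta\le\alpha$, we conclude $L_\beta\vDash\psi$.

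The step I expect to be the main obstacle is precisely the passage $L_\alpha\vDash\psi\Rightarrow L_\beta\vDash\psi$, together with securing that the relevant $\theta$ may be taken true. Downward absoluteness of $\Pi_1$ delivers the implication once truth of $\theta$ and $\beta\le\alpha$ are in hand, but the \emph{direction} of absoluteness is essential: a $\Sigma_1$ fact such as $\varphi$ transfers only upward, so one genuinely cannot avoid reflecting the $\Pi^\bfalpha_1$ axiom $\theta$ itself down to level $\beta$ rather than arguing about $\varphi$ directly. I would isolate, as the conceptual heart of the argument, the fact that a true $\Pi^\bfalpha_1$ axiom already holds at every level $L_\gamma$ with $\gamma\le\alpha$, and so cannot manufacture any $\Sigma^\bfalpha_1$ consequence beyond those already reflected by $|T|_{\Sigma^\bfalpha_1}$.
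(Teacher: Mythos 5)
Your proof is correct, and clause (1) is handled exactly as in the paper (the ordinal $|T|_{\Sigma^\bfalpha_1}$ is a function of the set of $\Sigma^\bfalpha_1$-theorems of $T$). The difference is in clause (2): the paper simply cites Theorem 1.2.5 of Pohlers' \emph{Subsystems of Set Theory and Second Order Number Theory} and gives no argument, whereas you supply a self-contained proof --- deduction theorem, normalization of $\lnot\psi\lor\varphi$ to a $\Sigma_1$ formula (here purely logical, so the equivalence holds in $L_\beta$ even though $L_\beta$ need not model $\KP_1$), the bound $\beta=|T|_{\Sigma^\bfalpha_1}\le\alpha$ from $\Sigma^\bfalpha_1$-soundness, and downward absoluteness of $\Pi_1$ from $L_\alpha$ to $L_\beta$. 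This is essentially the standard proof of the cited result, and the same mechanism the paper itself deploys later in \autoref{Lemma: True Pi 1 alpha does not affect reflection}; making it explicit here costs half a page but removes an external dependency. The one point you rightly isolate as delicate is that the argument uses the \emph{truth} of $\theta$ (to get $L_\alpha\vDash\psi$ and reflect it down to $L_\beta$), not merely that $T+\theta$ stays $\Sigma^\bfalpha_1$-sound; \autoref{a-Kreis} as literally written quantifies over all $\Pi^\bfalpha_1$ sentences, but the paper only ever applies clause (2) to true $\Pi^\bfalpha_1$ sentences (see the proofs of \autoref{Theorem: FirstEquivalence} and \autoref{main-kreisel}), so your reading matches the intended one and your caveat is warranted rather than a defect.
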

\begin{proof}
    Clearly $T\equiv_{\Sigma^\bfalpha_1} U$ implies $|T|_{\Sigma^\bfalpha_1}=|U|_{\Sigma^\bfalpha_1}$ since $|T|_{\Sigma^\bfalpha_1}$ is determined by the $\Sigma^\bfalpha_1$-consequences of $T$.
    The remaining condition follows from Theorem 1.2.5 of \cite{Pohlers1998Subsystems}.
\end{proof}

The next theorem says the $\bfalpha$-Kreiselian relation induced by $|T|_{\Sigma^\bfalpha_1}$ is the finest $\bfalpha$-Kreiselian relation:

\begin{theorem}\label{Theorem: FirstEquivalence}
    Let $\alpha$ be a $\KP_1$-provably admissible representation $\bfalpha$ and $\equiv$ be an $\boldsymbol{\alpha}$-Kreiselian relation. Let $T$ and $U$ are $\Pi^\bfalpha_1$-definable $\Sigma^\bfalpha_1$-sound extensions of $\KP_1 +\Exists(\bfalpha)$ such that $|T|_{\Sigma^\bfalpha_1}=|U|_{\Sigma^\bfalpha_1}$. Then $T\equiv U$.
\end{theorem}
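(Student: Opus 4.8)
The plan is to leverage the two defining clauses of an $\bfalpha$-Kreiselian relation. By \autoref{Lemma: equality-invariants} I may set $\lambda := |T|_{\Sigma^\bfalpha_1} = |U|_{\Sigma^\bfalpha_1} = |T|_{\bfalpha\barrepr} = |U|_{\bfalpha\barrepr}$. The reduction I would carry out is this: it suffices to produce \emph{true} $\Pi^\bfalpha_1$ sentences $\theta_T$ and $\theta_U$ with $T + \theta_T \equiv_{\Sigma^\bfalpha_1} U + \theta_U$. Indeed, since $\theta_T,\theta_U$ are true, the theories $T+\theta_T$ and $U+\theta_U$ remain $\Sigma^\bfalpha_1$-sound, and since adding a single sentence preserves $\Pi^\bfalpha_1$-definability they stay in the domain of $\equiv$; then clause (2) of \autoref{a-Kreis} gives $T \equiv T+\theta_T$ and $U \equiv U+\theta_U$, clause (1) gives $T+\theta_T \equiv U+\theta_U$, and transitivity yields $T \equiv U$. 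So the entire task is to engineer the oracles so that the two $\Sigma^\bfalpha_1$-theories coincide.

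The natural common target is $C_\lambda := \{\varphi^{L_\bfalpha} : \varphi \in \Sigma_1 \text{ and } L_\lambda \models \varphi\}$, and I would try to show that each of $T+\theta_T$ and $U+\theta_U$ has $\Sigma^\bfalpha_1$-theory exactly $C_\lambda$. The upper bound is automatic. Adding a true $\Pi^\bfalpha_1$ sentence $\theta = F^{L_\bfalpha}$ (with $F \in \Pi_1$) cannot raise the invariant: by the deduction theorem $T + \theta \vdash \varphi^{L_\bfalpha}$ is equivalent to $T \vdash (\lnot F \lor \varphi)^{L_\bfalpha}$, the disjunction $\lnot F \lor \varphi$ normalizes to a $\Sigma_1$ formula, and since $F \in \Pi_1$ holds in $L_\bfalpha$ it holds in $L_\lambda$ by downward absoluteness; hence the defining property of $\lambda = |T|_{\Sigma^\bfalpha_1}$ forces $L_\lambda \models \varphi$. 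Thus $|T + \theta|_{\Sigma^\bfalpha_1} = \lambda$ (the reverse inequality is trivial), so every $\Sigma^\bfalpha_1$-theorem of $T+\theta$ lies in $C_\lambda$, and likewise for $U$.

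Everything therefore rests on the lower bound, which is the crux: to construct a \emph{single} true $\Pi^\bfalpha_1$ sentence $\theta_T$ with $C_\lambda \subseteq \{\sigma : T + \theta_T \vdash \sigma\}$, and symmetrically for $U$. Here I would use \autoref{Lemma: alpha-Kleene} to replace an arbitrary $\sigma = \varphi^{L_\bfalpha} \in C_\lambda$ by the ordinal-existence statement $\Exists(\bfgamma_\sigma)^{L_\bfalpha}$, where $\gamma_\sigma < \lambda$ is the least stage at which a witness for $\varphi$ appears (such a stage exists below $\lambda$ because $\lambda$ is a limit by \autoref{Lemma: limit-case}). By \autoref{Lemma: limit-case}, $T$ already proves $\Exists(\bfbeta)^{L_\bfalpha}$ for representations $\bfbeta$ cofinal in $\lambda$, so for each $\gamma_\sigma$ there is a $T$-provable $\bfbeta$ with $\gamma_\sigma < \beta$; the job of $\theta_T$ is to license, uniformly in $\sigma$ and $\bfbeta$, the passage from ``the $\bfbeta$-stage exists'' to ``a witness for $\varphi$ already occurs below the $\bfbeta$-stage.''

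The main obstacle I anticipate is keeping this oracle at level $\Pi^\bfalpha_1$ while making it act uniformly. The naive bridging sentence ``for every $\xi$, if $R^\bfbeta(\xi)$ then $L_\xi$ contains a witness'' is only $\Pi^\bfalpha_2$: asserting the existence of a witness is irreducibly $\Sigma_1$ (the truth predicate $\vDash_{\Sigma_1}$ is provably not $\Pi_1$-definable), and the outer quantifier over stages cannot be dropped. Moreover a single oracle must discharge the hypotheses for \emph{all} of the infinitely many $\sigma \in C_\lambda$ at once. I expect the resolution to parallel the $\Pi^1_1$ arguments of \cite{walsh2023characterizations}: rather than bridging from the cofinal family of $T$-provable stages one $\sigma$ at a time, one fixes a single $\Sigma_1$-definable relation of order type $\lambda$ built from those provable stages and takes $\theta_T$ to assert its correctness, in a form rendered $\Pi^\bfalpha_1$ through the $\Pi_1$-representation calculus of \eqref{Formula: Pi 1 representation}; this correctness statement should play the role that well-foundedness of a provably recursive order of type $|T|$ plays in the arithmetical setting. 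Verifying that the resulting sentence is genuinely $\Pi^\bfalpha_1$ and that its discharge is uniform over $C_\lambda$ is, I expect, the technical heart of the proof.
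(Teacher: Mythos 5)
Your reduction skeleton is exactly the paper's: augment $T$ and $U$ by true $\Pi^\bfalpha_1$ sentences until the $\Sigma^\bfalpha_1$-theories agree, then chain clause (2), clause (1), and transitivity of $\equiv$. But the step you yourself flag as ``the technical heart'' --- producing the true $\Pi^\bfalpha_1$ oracle that discharges the lower bound --- is precisely what you have not done, and your speculation about how to do it (a single uniform sentence asserting the correctness of a $\Sigma_1$-definable relation of order type $\lambda$, in analogy with the well-foundedness statements of the arithmetic setting) points in a more complicated direction than the actual resolution. The paper's device is the sentence $\Comp(\bfgamma,\bfdelta) \equiv \forall\xi\,[R^\bfdelta(\xi)\to L_\xi\models\Exists(\bfgamma)]$: given a $\Sigma^\bfalpha_1$ theorem of $T$, \autoref{Lemma: alpha-Kleene} converts it to $\Exists(\bfgamma)^{L_\bfalpha}$, and \autoref{Lemma: limit-case} supplies a \emph{$U$-provable} representation $\bfdelta$ with $\gamma<\delta<|U|_{\Sigma^\bfalpha_1}$. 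The point you missed is that bounding the witness-existence claim by the stage named by $\bfdelta$ collapses the complexity: ``$L_\xi\models\Exists(\bfgamma)$'' is $\Delta_1$ in $\xi$ (via the $\Delta_1$-definability of $L_\xi$ and of satisfaction), $R^\bfdelta$ is $\Sigma_1$, so the whole implication under $\forall\xi$ is $\Pi_1$, and its relativization to $L_\bfalpha$ is $\Pi^\bfalpha_1$ --- not $\Pi^\bfalpha_2$ as for the unbounded version you correctly reject. Since $U$ proves $\Exists(\bfdelta)^{L_\bfalpha}$, the theory $U+\Comp(\bfgamma,\bfdelta)$ recovers $\Exists(\bfgamma)^{L_\bfalpha}$ and hence the original theorem.

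The second divergence is that the paper does not attempt a single oracle sentence covering all of $C_\lambda$ at once. It adds \emph{one} $\Comp(\bfgamma,\bfdelta)$ per $\Sigma^\bfalpha_1$-theorem (of $T$ and of $U$, cross-seeded into both $\widehat{T}$ and $\widehat{U}$), checks that the resulting infinite axiom sets remain $\Pi^\bfalpha_1$-definable, and then exploits finiteness of proofs: any $\Sigma^\bfalpha_1$ theorem of $\widehat{T}$ follows from $T+\sigma$ for $\sigma$ a finite conjunction of the added sentences, so $T\vdash\sigma\to\theta$ with $\sigma\to\theta$ again $\Sigma^\bfalpha_1$, which $\widehat{U}$ proves by construction. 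This also makes your upper-bound paragraph (that adding a true $\Pi^\bfalpha_1$ sentence does not raise $|{\cdot}|_{\Sigma^\bfalpha_1}$) unnecessary: the paper never needs the augmented theories to have $\Sigma^\bfalpha_1$-theory \emph{exactly} $C_\lambda$, only that they have the \emph{same} $\Sigma^\bfalpha_1$-theory, which is a weaker and more easily arranged condition. As it stands, your proposal is a correct outline with the essential construction missing.
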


\begin{proof}
    Let $A(x)$ be a $\Delta_0$-formula and assume that $T\vdash \exists x\in L_\alpha A(x)$. By \autoref{Lemma: alpha-Kleene}, we can find a representation $\bfgamma$ such that
    \begin{equation*}
        \KP_1+\Exists(\bfalpha)\vdash \exists x\in L_\bfalpha\ A(x)\leftrightarrow \Exists(\bfgamma)^{L_\bfalpha}.
    \end{equation*}
    
    Then we have $\bfgamma<|T|_{\Sigma^\bfalpha_1}=|U|_{\Sigma^\bfalpha_1}$, and $L_{|U|_{\Sigma^\bfalpha_1}}\models \Exists(\bfgamma)$.
    By \autoref{Lemma: limit-case}, $|U|_{\Sigma^\bfalpha_1}$ is a limit of representable ordinals that $U$-provably exist, 
    so we can find some representable ordinal $\bfdelta<|U|_{\Sigma^\bfalpha_1}$ such that $\bfgamma< \bfdelta$, $U\vdash\Exists(\bfdelta)^{L_\bfalpha}$, and $L_\bfdelta\models\Exists(\bfgamma)$.%
    \footnote{We know that $L_{|U|_{\Sigma^\bfalpha_1}}\models \Exists(\bfgamma)$, and $|U|_{\Sigma^\bfalpha_1}$ is a limit of representable ordinals that provably exist over $U$. 
    Since the claim $\Exists(\bfgamma)\equiv \exists\eta R^\bfgamma(\eta)$ is $\Sigma_1$, we can find a desired representable $\bfdelta$.}
    
    Now consider the sentence $\Comp(\bfgamma,\bfdelta)$ given by
    \begin{equation*}
        \Comp(\bfgamma,\bfdelta)\equiv \forall\xi[R^\bfdelta(\xi)\to L_\xi \models \Exists(\bfgamma)].
    \end{equation*}
    Roughly, $\Comp(\bfgamma,\bfdelta)$ says that if $\bfdelta$ exists then $\bfgamma$ exists and $\bfgamma < \bfdelta$. So, in some sense, $\Comp(\bfgamma,\bfdelta)$ says that $\bfgamma<\bfdelta$. Also, note that $\Comp(\bfgamma,\bfdelta)$ is a true $\Pi^\bfalpha_1$-sentence as long as $\Exists(\bfgamma)$ and $\Exists(\bfdelta)$ are true and $\bfgamma<\bfdelta$.
    
    Now we can see that $U$ + $\Comp(\bfgamma,\bfdelta)$ proves $L_\bfdelta\models \Exists(\bfgamma)^{L_\bfalpha}$: 
    Working inside $U$ + $\Comp(\bfgamma,\bfdelta)$, we have $\Exists(\bfdelta)^{L_\bfalpha}$, 
    which implies $\bfdelta < \bfalpha$.
    Then by $\Comp(\bfgamma,\bfdelta)$, we have $L_\bfdelta\vDash \Exists(\bfgamma)$.
    Now by upward absoluteness, we have $L_\bfalpha\vDash \Exists(\bfgamma)$.

    Returning back to reasoning externally, we have
    $U+\Comp(\bfgamma,\bfdelta)\vdash \Exists(\bfgamma)^{L_\bfalpha}$. Hence $U+\Comp(\bfgamma,\bfdelta)$ also proves $\exists x A(x)$.
    To rephrase the conclusion, we proved the following:

    \begin{itemize}
        \item[($\star$)]  For every $\Delta_0$-formula $A(x)$, if $T\vdash \exists x\in L_\alpha A(x)$, then we can find two representable ordinals $\bfgamma$ and $\bfdelta$ such that $U\vdash \Exists(\bfdelta)$ and $U + \Comp(\bfgamma,\bfdelta)\vdash \exists x\in L_\bfalpha A(x)$.
    \end{itemize}

    Define $T_{\Sigma^\bfalpha_1}$ as the set of $\Sigma^\bfalpha_1$ theorems of $T$ and $U_{\Sigma^\bfalpha_1}$ as the set of $\Sigma^\bfalpha_1$ theorems of $U$. Now we define an extension $\widehat{U}$ of the theory $U$.  Precisely, $\varphi \in \widehat{U}$ if and only if one of the following holds:
    \begin{enumerate}
        \item $\varphi \in U$.
        
        \item $\varphi$ has the form $\Comp(\bfgamma,\bfdelta)$, where $\bfgamma$ and $\bfdelta$ are representations and 
        \begin{equation*}
            \exists\psi\in T_{\Sigma^\bfalpha_1} \left[ \KP_1+\Exists(\bfalpha) \vdash \psi\leftrightarrow \Exists(\bfgamma),\, 
            U\vdash \Exists(\bfdelta),\, L_\bfdelta\models \Exists(\bfgamma)\right].
        \end{equation*}
        
        \item $\varphi$ has the form $\Comp(\bfgamma,\bfdelta)$, where $\bfgamma$ and $\bfdelta$ are representations and
        \begin{equation*}
            \exists\psi\in U_{\Sigma^\bfalpha_1} \left[ \KP_1+\Exists(\bfalpha) \vdash \psi\leftrightarrow \Exists(\bfgamma),\, T\vdash \Exists(\bfdelta),\, L_\bfdelta\models \Exists(\bfgamma)\right].
        \end{equation*}
    \end{enumerate}
    We define $\widehat{T}$ analogously except that we replace clause (1) with $\varphi \in T$.
    By ($\star$), $\widehat{T}$ and $\widehat{U}$ prove all of the $\Sigma^\bfalpha_1$-theorems of $T$ and $U$.

    Furthermore, $\widehat{T}$ and $\widehat{U}$ are also $\Pi^\bfalpha_1$-definable since both of $T$ and $U$ are $\Pi^\bfalpha_1$-definable and ``$L_\bfdelta\models \Exists(\bfgamma)$'' is expressible by the following $\Pi^\bfalpha_1$ formula:
    \begin{equation*}
        \left[\forall \eta\ \left(R^\bfdelta (\eta)\to L_\eta\models \Exists(\bfgamma)\right)\right]^{L_\bfalpha}.
    \end{equation*}

    We extended $T$ to $\widehat{T}$ by adding true $\Pi^\bfalpha_1$-sentences; hence $T\equiv \widehat{T}$ by condition 2 from Definition \ref{a-Kreis}. Similarly, $U\equiv \widehat{U}$.
    
    Now we claim that $\widehat{T}\equiv_{\Sigma^\bfalpha_1}\widehat{U}$, which will finalize our proof since it implies $\widehat{T}\equiv \widehat{U}$, so we have $T\equiv \widehat{T} \equiv \widehat{U} \equiv U$.
    Indeed, it suffices to show that $\widehat{T}\subseteq_{\Sigma^\bfalpha_1} \widehat{U}$ since the converse containment is symmetric.
    
    Suppose that $\widehat{T}\vdash\theta$ for some $\Sigma^\bfalpha_1$-sentence $\theta$. Then $T+\sigma\vdash \theta$ for some true $\Pi^\bfalpha_1$-sentence $\sigma$, which is a conjunction of sentences of the form $\Comp(\bfgamma,\bfdelta)$ we added.
    Thus we have $T\vdash \sigma\to\theta$, and $\sigma\to\theta$ is $\Sigma^\bfalpha_1$.
    However, we defined $\widehat{U}$ so that it proves all $\Sigma^\bfalpha_1$-consequences of $T$, so $\widehat{U}\vdash\sigma\to \theta$. Since $\sigma\in \widehat{U}$, we have $\widehat{U}\vdash\theta$.
\end{proof}

What are the $\bfalpha$-Kreselian equivalence relations, other than the ones induced by generalized ordinal analysis? In the rest of this section, we will give an exact answer to this question. This answer has another benefit. Theorem \ref{Theorem: FirstEquivalence} says the $\alpha$-ordinal analysis partition is a maximally fine $\boldsymbol{\alpha}$-Kreiselian equivalence relation. But we will show that it is indeed the unique finest $\alpha$-Kreseilian equivalence relation.

\begin{definition}
    $\Ord_{\Sigma^\bfalpha_1}$ is the set $$\big\{ |T|_{\Sigma^\bfalpha_1} : \text{ $T$ is a $\Pi^\bfalpha_1$-definable $\Sigma^\bfalpha_1$-sound extension of $\KP_1 +\Exists(\bfalpha)$} \big\}.$$
\end{definition} 

\begin{definition}
    Given a partition $\sim$ of $\Ord_{\Sigma^\bfalpha_1}$, the \emph{$\sim$-induced partition} $\equiv$ on theories is defined as follows:
$$T\equiv U\text{ if and only if }|T|_{\Sigma^\bfalpha_1}\sim|U|_{\Sigma^\bfalpha_1}.$$ We say that $\equiv$ is \emph{$\Ord_{\Sigma^\bfalpha_1}$ induced} if $\equiv$ is $\sim$-induced for some partition $\sim$ of $\Ord_{\Sigma^\bfalpha_1}$.
\end{definition}

\begin{remark}
    Since every equivalence relation coarsens the identity relation, every $\Ord_{\Sigma^\bfalpha_1}$ induced relation coarsens the $\bfalpha$-ordinal analysis partition.
\end{remark}

\begin{theorem}\label{main-kreisel}
The $\bfalpha$-Kreiselian equivalence relations are exactly the $\Ord_{\Sigma^\bfalpha_1}$ induced partitions.
\end{theorem}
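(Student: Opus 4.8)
The plan is to prove the two inclusions between the class of $\bfalpha$-Kreiselian relations and the class of $\Ord_{\Sigma^\bfalpha_1}$ induced partitions separately, with essentially all of the substantive work already discharged by \autoref{Theorem: FirstEquivalence}. That theorem supplies the crucial congruence fact that, for any $\bfalpha$-Kreiselian $\equiv$, equality of $\Sigma^\bfalpha_1$-ordinals forces equivalence; everything else is bookkeeping about equivalence relations and quotients.

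\emph{Induced partitions are Kreiselian.} First I would show that every $\Ord_{\Sigma^\bfalpha_1}$ induced partition $\equiv$ satisfies the two clauses of Definition \ref{a-Kreis}. The cleanest route is to observe that both Kreiselian conditions are monotone under coarsening: if $\equiv_0$ is $\bfalpha$-Kreiselian and $\equiv$ is any coarser equivalence relation (so that $T\equiv_0 U$ implies $T\equiv U$), then $\equiv$ is again $\bfalpha$-Kreiselian, since clause (1) asks only that $\equiv$ relate theories with $T\equiv_{\Sigma^\bfalpha_1}U$ and clause (2) asks only that $\equiv$ relate $T$ and $T+\theta$, and coarsening never removes relations. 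The $\bfalpha$-ordinal analysis partition is $\bfalpha$-Kreiselian by the Proposition above, and by the Remark preceding the theorem every $\Ord_{\Sigma^\bfalpha_1}$ induced partition coarsens it; hence every such partition is $\bfalpha$-Kreiselian.

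\emph{Kreiselian relations are induced.} Conversely, let $\equiv$ be an arbitrary $\bfalpha$-Kreiselian relation. I would define $\sim$ on $\Ord_{\Sigma^\bfalpha_1}$ by declaring $\beta\sim\gamma$ exactly when there exist domain theories $T,U$ with $|T|_{\Sigma^\bfalpha_1}=\beta$, $|U|_{\Sigma^\bfalpha_1}=\gamma$, and $T\equiv U$; such theories exist for each $\beta,\gamma\in\Ord_{\Sigma^\bfalpha_1}$ by the definition of $\Ord_{\Sigma^\bfalpha_1}$. The key point---and the place where \autoref{Theorem: FirstEquivalence} does the real work---is that this existential definition agrees with the corresponding universal one: since $|T|_{\Sigma^\bfalpha_1}=|T'|_{\Sigma^\bfalpha_1}$ forces $T\equiv T'$, the truth of $T\equiv U$ depends only on $|T|_{\Sigma^\bfalpha_1}$ and $|U|_{\Sigma^\bfalpha_1}$. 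Granting this, reflexivity and symmetry of $\sim$ are immediate from those of $\equiv$, and transitivity follows by a short chain: if $\beta\sim\gamma$ is witnessed by $T_1\equiv U_1$ and $\gamma\sim\delta$ by $T_2\equiv U_2$, then $|U_1|_{\Sigma^\bfalpha_1}=\gamma=|T_2|_{\Sigma^\bfalpha_1}$, so $U_1\equiv T_2$ by \autoref{Theorem: FirstEquivalence}, whence $T_1\equiv U_1\equiv T_2\equiv U_2$ and $\beta\sim\delta$. Thus $\sim$ is an equivalence relation on $\Ord_{\Sigma^\bfalpha_1}$.

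Finally I would check that $\equiv$ is exactly the $\sim$-induced partition, i.e.\ that $T\equiv U$ if and only if $|T|_{\Sigma^\bfalpha_1}\sim|U|_{\Sigma^\bfalpha_1}$. The forward direction is immediate by taking $T,U$ themselves as the witnesses in the definition of $\sim$. For the backward direction, if $|T|_{\Sigma^\bfalpha_1}\sim|U|_{\Sigma^\bfalpha_1}$ is witnessed by $T'\equiv U'$ with $|T'|_{\Sigma^\bfalpha_1}=|T|_{\Sigma^\bfalpha_1}$ and $|U'|_{\Sigma^\bfalpha_1}=|U|_{\Sigma^\bfalpha_1}$, then \autoref{Theorem: FirstEquivalence} gives $T\equiv T'$ and $U\equiv U'$, so $T\equiv T'\equiv U'\equiv U$. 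The only thing to be mindful of is the standing hypothesis that every theory in sight remain in the domain (that is, $\Pi^\bfalpha_1$-definable and $\Sigma^\bfalpha_1$-sound); no genuine obstacle arises, since \autoref{Theorem: FirstEquivalence} is stated precisely for such theories and no new theories are constructed here. I expect the main (though modest) subtlety to be exactly the reduction of the ``there exist representatives'' formulation of $\sim$ to the ``for all representatives'' formulation used in transitivity and in the backward direction, which is where the full strength of \autoref{Theorem: FirstEquivalence} is needed.
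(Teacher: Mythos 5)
Your proposal is correct and follows essentially the same route as the paper: both directions reduce to the fact that induced partitions coarsen the $\bfalpha$-ordinal analysis partition (itself $\bfalpha$-Kreiselian), and to defining $\sim$ on $\Ord_{\Sigma^\bfalpha_1}$ by the existence of $\equiv$-related representatives and then invoking \autoref{Theorem: FirstEquivalence} to pass between representatives. Your explicit verification that transitivity of $\sim$ also requires \autoref{Theorem: FirstEquivalence} fills in a step the paper dismisses as trivial, but this is a matter of exposition rather than a different argument.
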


\begin{proof}
First, we need to see that every $\Ord_{\Sigma^\bfalpha_1}$ induced partition is $\boldsymbol{\alpha}$-Kreiselian. Let $\equiv$ be $\Ord_{\Sigma^\bfalpha_1}$ induced.

\begin{enumerate}
    \item Suppose that $T\equiv_{\Sigma^\bfalpha_1}U$. Then $|T|_{\Sigma^\bfalpha_1}=|U|_{\Sigma^\bfalpha_1}$, since the $\alpha$-ordinal analysis partition is $\boldsymbol{\alpha}$-Kreiselian. Then $T \equiv U$ since $\equiv$ coarsens the $\alpha$-ordinal analysis partition.
    \item Let $\varphi$ be true $\Pi^\bfalpha_1$. Then $|T|_{\Sigma^\bfalpha_1}=|T+\varphi|_{\Sigma^\bfalpha_1}$. Then $T \equiv T+\varphi$ since $\equiv$ coarsens the $\bfalpha$-ordinal analysis partition.
\end{enumerate}

Next we must show that every $\boldsymbol{\alpha}$-Kreiselian partition is $\Ord_{\Sigma^\bfalpha_1}$ induced. Let $\equiv$ be $\boldsymbol{\alpha}$-Kreiselian. We define a partition $\sim$ on $\Ord_{\Sigma^\bfalpha_1}$ as follows: For $\beta,\gamma\in \Ord_{\Sigma^\bfalpha_1}$, $\beta\sim\gamma$ if and only if, for some $\Pi^\bfalpha_1$-definable $\Sigma^\bfalpha_1$-sound extensions $V$ and $W$ of $\KP_1 +\Exists(\bfalpha)$, the following three claims hold:
\begin{enumerate}
    \item $|V|_{\Sigma^\bfalpha_1}=\beta$
    \item $|W|_{\Sigma^\bfalpha_1}=\gamma.$
    \item $V\equiv W$.
\end{enumerate}

It is trivial to establish that $\sim$ is an equivalence relation on $\Ord_{\Sigma^\bfalpha_1}$. So we need only see that $\sim$ induces $\equiv$. To this end, we define:
$$T\equiv_\star U := |T|_{\Sigma^\bfalpha_1}\sim |U|_{\Sigma^\bfalpha_1}.$$

Since $\equiv_\star$ just is the $\sim$-induced equivalence relation, it suffices to show that $T\equiv U$ if and only if $T\equiv_\star U$. 

Suppose that $T\equiv U$.  So by the definition of $\sim$, we have $|T|_{\Sigma^\bfalpha_1}\sim |U|_{\Sigma^\bfalpha_1}$. But then $T\equiv_\star U$.

Conversely, suppose that $T\equiv_\star U$, i.e., that $|T|_{\Sigma^\bfalpha_1}\sim |U|_{\Sigma^\bfalpha_1}$. By definition of $\sim$, we infer that for some $T'$ and $U'$:
\begin{enumerate}
    \item $|T'|_{\Sigma^\bfalpha_1}=|T|_{\Sigma^\bfalpha_1}$.
    \item $|U'|_{\Sigma^\bfalpha_1}=|U|_{\Sigma^\bfalpha_1}.$
    \item $T' \equiv U'$.
\end{enumerate}

Applying Theorem \ref{Theorem: FirstEquivalence} to (1) yields $T\equiv T'$. Likewise, applying Theorem \ref{Theorem: FirstEquivalence} to (2) yields $U\equiv U'$.

Combining these observations with (3) yields $T\equiv T' \equiv U' \equiv U$, whence $T\equiv U$.
\end{proof}

\begin{corollary}
The $\bfalpha$-ordinal analysis partition properly refines every other $\boldsymbol{\alpha}$-Kreiselian partition.
\end{corollary}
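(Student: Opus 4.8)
The plan is to read the statement off directly from \autoref{Theorem: FirstEquivalence}, with only an elementary set-theoretic observation needed to pass from ``refines'' to ``properly refines.'' First I would recall the bookkeeping: viewing an equivalence relation as its set of related pairs, a partition $P$ refines a partition $Q$ exactly when $\equiv_P\,\subseteq\,\equiv_Q$, and $P$ \emph{properly} refines $Q$ when this inclusion is strict. Write $\equiv_{\mathrm{OA}}$ for the $\bfalpha$-ordinal analysis partition, i.e.\ $T\equiv_{\mathrm{OA}}U \iff |T|_{\Sigma^\bfalpha_1}=|U|_{\Sigma^\bfalpha_1}$; by the proposition established earlier it is itself $\bfalpha$-Kreiselian, so it is a genuine member of the family we are comparing against.

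Next I would invoke \autoref{Theorem: FirstEquivalence}. Fix any $\bfalpha$-Kreiselian equivalence relation $\equiv$ on the $\Pi^\bfalpha_1$-definable $\Sigma^\bfalpha_1$-sound extensions of $\KP_1+\Exists(\bfalpha)$. That theorem asserts precisely that $|T|_{\Sigma^\bfalpha_1}=|U|_{\Sigma^\bfalpha_1}$ implies $T\equiv U$; unwinding the definition of $\equiv_{\mathrm{OA}}$, this is exactly the inclusion $\equiv_{\mathrm{OA}}\,\subseteq\,\equiv$. Hence the $\bfalpha$-ordinal analysis partition refines every $\bfalpha$-Kreiselian partition.

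Finally I would upgrade refinement to proper refinement for the ``other'' partitions. Suppose $\equiv$ is $\bfalpha$-Kreiselian with $\equiv\neq\equiv_{\mathrm{OA}}$ as partitions. We already have $\equiv_{\mathrm{OA}}\,\subseteq\,\equiv$; were this inclusion an equality, the two relations would coincide, contradicting $\equiv\neq\equiv_{\mathrm{OA}}$. Therefore $\equiv_{\mathrm{OA}}\,\subsetneq\,\equiv$, i.e.\ the $\bfalpha$-ordinal analysis partition properly refines $\equiv$.

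There is no real obstacle here: all the mathematical weight sits in \autoref{Theorem: FirstEquivalence}, and the corollary is a one-line logical consequence once that theorem is in hand. The only point worth stating explicitly is the elementary fact that a refinement which is not an equality is automatically proper, which is exactly what separates ``the ordinal analysis partition is a maximally fine $\bfalpha$-Kreiselian relation'' from ``it is strictly finer than every distinct competitor.'' One could alternatively route the argument through \autoref{main-kreisel} together with the remark that every $\Ord_{\Sigma^\bfalpha_1}$-induced partition coarsens the ordinal analysis partition, but the direct appeal to \autoref{Theorem: FirstEquivalence} is the cleanest.
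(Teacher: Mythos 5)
Your proposal is correct and matches the paper's intent: the corollary is stated without proof precisely because it is the immediate consequence of \autoref{Theorem: FirstEquivalence} (giving $\equiv_{\mathrm{OA}}\,\subseteq\,\equiv$ for any $\bfalpha$-Kreiselian $\equiv$) together with the trivial observation that a refinement between distinct partitions is automatically proper. Your alternative route through \autoref{main-kreisel} and the remark that every $\Ord_{\Sigma^\bfalpha_1}$-induced relation coarsens the identity is also fine, but the direct argument you give is the intended one.
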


\begin{corollary}
    The $\bfalpha$-ordinal analysis partition is the unique finest $\bfalpha$-Kreiselian partition.
\end{corollary}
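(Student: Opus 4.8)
The plan is to derive this as a purely formal consequence of the results already in hand, organizing the argument around the partial order of partitions under refinement. I would first unpack what ``unique finest'' means: regarding the $\bfalpha$-Kreiselian partitions as a poset under refinement, a partition is \emph{finest} exactly when it refines every $\bfalpha$-Kreiselian partition, i.e.\ when it is a least element of this poset; and least elements, when they exist, are automatically unique. So the two halves of the statement are (i) the $\bfalpha$-ordinal analysis partition is a least element, and (ii) least elements are unique.

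Part (i) is precisely the content of the preceding corollary, which asserts that the $\bfalpha$-ordinal analysis partition (properly) refines every other $\bfalpha$-Kreiselian partition. For part (ii) I would argue directly: suppose $\equiv$ is any $\bfalpha$-Kreiselian partition that is also finest. Since $\equiv$ refines every $\bfalpha$-Kreiselian partition, in particular it refines the $\bfalpha$-ordinal analysis partition; but the $\bfalpha$-ordinal analysis partition also refines $\equiv$, being itself finest. Two partitions that each refine the other must coincide, so $\equiv$ is the $\bfalpha$-ordinal analysis partition. Equivalently, invoking proper refinement: if $\equiv$ were distinct from the $\bfalpha$-ordinal analysis partition, the preceding corollary would give that the latter \emph{properly} refines $\equiv$, so $\equiv$ could not in turn refine it, contradicting that $\equiv$ is finest.

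As a conceptual cross-check I would note that the claim also falls straight out of Theorem \ref{main-kreisel}. That theorem exhibits a correspondence between $\bfalpha$-Kreiselian relations and partitions of $\Ord_{\Sigma^\bfalpha_1}$, sending a partition $\sim$ to its induced relation; this correspondence is order-preserving and injective, the injectivity being immediate from the fact that every element of $\Ord_{\Sigma^\bfalpha_1}$ is realized as some $|T|_{\Sigma^\bfalpha_1}$. The $\bfalpha$-ordinal analysis partition is the image of the discrete partition of $\Ord_{\Sigma^\bfalpha_1}$, which is the unique finest partition of any set, and transporting this uniqueness across the correspondence yields the corollary. I do not anticipate a genuine obstacle: the statement is essentially a repackaging of the preceding corollary together with the elementary fact that a least element of a poset is unique, and the only point demanding even slight care is verifying that the correspondence of Theorem \ref{main-kreisel} is injective, which the definition of $\Ord_{\Sigma^\bfalpha_1}$ makes routine.
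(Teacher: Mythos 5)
Your argument is correct and is essentially the paper's intended one: the preceding corollary gives that the $\bfalpha$-ordinal analysis partition refines every $\bfalpha$-Kreiselian partition (and is itself $\bfalpha$-Kreiselian by the earlier proposition), and a least element of the refinement poset is automatically unique. The cross-check via the correspondence of Theorem \ref{main-kreisel} with the discrete partition of $\Ord_{\Sigma^\bfalpha_1}$ is also sound and matches the paper's remark that every $\Ord_{\Sigma^\bfalpha_1}$-induced relation coarsens the identity-induced one.
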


Recall that $\Pi^1_1$-statements are precisely $\Sigma_1^{\omega_1^{CK}}$-sentences. So for $\alpha=\omega_1^{\mathsf{CK}}$, i.e., the classic ordinal analysis, we can say more. The following theorem follows is probably ``folklore.''

\begin{proposition}\label{exact-epsilon}
    The proof-theoretic ordinals of the $\Pi^1_1$-sound $\Sigma^1_1$-definable extensions of $\mathsf{ACA}_0$ are exactly the recursive $\varepsilon$-numbers.
\end{proposition}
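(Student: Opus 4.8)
The plan is to prove both inclusions, freely transporting between the classical proof-theoretic ordinal and the set-theoretic invariant of this paper via the identity $|T^\star|_{\Sigma_1^{\omega_1^{\mathsf{CK}}}}=|T|_{\mathsf{WF}}$ recorded in the introduction (so that ``$\Pi^1_1$-sound $\Sigma^1_1$-definable'' matches ``$\Sigma^\bfalpha_1$-sound $\Pi^\bfalpha_1$-definable'' for $\bfalpha=\omega_1^{\mathsf{CK}}$).

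For the first inclusion, let $T$ be a $\Pi^1_1$-sound, $\Sigma^1_1$-definable extension of $\mathsf{ACA}_0$. I would first show $|T|_{\mathsf{WF}}$ is recursive. Since $T$ is (lightface) $\Sigma^1_1$-definable, the relation $T\vdash(\cdot)$ is uniformly $\Sigma^1_1$, so $W=\{e: \prec_e\text{ is a recursive linear order and }T\vdash\mathsf{WF}(\prec_e)\}$ is $\Sigma^1_1$; by $\Pi^1_1$-soundness every $\prec_e$ with $e\in W$ is genuinely well-founded, so $W\subseteq\mathsf{WO}$, and the $\Sigma^1_1$-boundedness theorem forces $|T|_{\mathsf{WF}}=\sup_{e\in W}|\prec_e|<\omega_1^{\mathsf{CK}}$. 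Next, that it is an $\varepsilon$-number: $\mathsf{ACA}_0$ proves well-foundedness is preserved under $\beta\mapsto\omega^\beta$, so $T\vdash\mathsf{WF}(\prec)$ with $|\prec|=\beta$ yields $T\vdash\mathsf{WF}(\omega^\prec)$, whence $\omega^\beta<|T|_{\mathsf{WF}}$; thus $|T|_{\mathsf{WF}}$ is closed under $\omega$-exponentiation. By \autoref{Lemma: limit-case} (transported along $\star$) it is a limit ordinal, and it is $\ge\varepsilon_0>0$ as $T\supseteq\mathsf{ACA}_0$. A nonzero limit $\delta$ closed under $\beta\mapsto\omega^\beta$ satisfies $\omega^\delta=\sup_{\beta<\delta}\omega^\beta\le\delta\le\omega^\delta$, so $\omega^\delta=\delta$ and $\delta$ is an $\varepsilon$-number.

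For the converse, given a recursive $\varepsilon$-number $\varepsilon$, fix a recursive well-order $\prec$ of type $\varepsilon$ and set $T=\mathsf{ACA}_0+\{\mathsf{WF}(\prec\restriction b): b\in\mathrm{field}(\prec)\}$. The axiom set is recursive, hence $\Sigma^1_1$-definable, and $T$ holds in the full $\omega$-model, so it is sound and in particular $\Pi^1_1$-sound. For the lower bound, the proper initial segments $\prec\restriction b$ are recursive well-orders of types cofinal in $\varepsilon$, so $|T|_{\mathsf{WF}}\ge\varepsilon$.

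The crux, and the expected main obstacle, is the matching upper bound $|T|_{\mathsf{WF}}\le\varepsilon$. Any $T$-proof of $\mathsf{WF}(\prec^*)$ uses only finitely many axioms, and since a larger initial segment proves well-foundedness of the smaller ones, it is already a proof in $\mathsf{ACA}_0+\mathsf{WF}(\prec\restriction b)$ for a single $b$ with $|\prec\restriction b|=\beta<\varepsilon$. Here I would invoke the standard ordinal analysis of $\mathsf{ACA}_0$ augmented by one well-foundedness axiom: its provable order types are exactly those below the least $\varepsilon$-number strictly above $\beta$. Because $\varepsilon$ is itself an $\varepsilon$-number above $\beta$, that bound is $\le\varepsilon$, so $|\prec^*|<\varepsilon$; hence $T$ proves no order of type $\ge\varepsilon$ well-founded and $|T|_{\mathsf{WF}}=\varepsilon$. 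This ordinal-analytic input — bounding the proof-theoretic ordinal of $\mathsf{ACA}_0+\mathsf{WF}(\beta)$ by the least $\varepsilon$-number above $\beta$, via cut-elimination together with the closure properties of $\mathsf{ACA}_0$-provable well-foundedness — is exactly the ``folklore'' content; the remainder is soundness bookkeeping and $\Sigma^1_1$-boundedness.
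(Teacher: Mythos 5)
The paper never actually proves this proposition --- it is introduced with ``probably folklore'' and no argument is given --- so there is nothing in the text to compare your proof against; what you have written is, in effect, the missing proof, and it is correct and follows the route the authors almost certainly have in mind. Your forward direction is complete: lightface $\Sigma^1_1$-definability of the provability relation plus $\Pi^1_1$-soundness plus $\Sigma^1_1$-boundedness gives $|T|_{\mathsf{WF}}<\omega_1^{\mathsf{CK}}$, and Girard's theorem (that $\mathsf{ACA}_0$ proves preservation of well-foundedness under $\beta\mapsto\omega^\beta$) gives closure under $\omega$-exponentiation; do note that closure alone does not force $\delta=\omega^\delta$ (e.g.\ $\varepsilon_0+1$ is closed under $\beta\mapsto\omega^\beta$), so the limit step you route through \autoref{Lemma: limit-case} is genuinely load-bearing --- it is obtained more directly from $T\vdash\mathsf{WF}(\prec)\to\mathsf{WF}(\prec+1)$. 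The converse via $T=\mathsf{ACA}_0+\{\mathsf{WF}(\prec\upharpoonright b):b\in\mathrm{field}(\prec)\}$ is the expected construction, and the compactness/initial-segment reduction to a single axiom $\mathsf{WF}(\prec\upharpoonright b)$ is right. The one place where you import an unproved external fact is the bound $|\mathsf{ACA}_0+\mathsf{WF}(\prec\upharpoonright b)|_{\mathsf{WF}}\le{}$the least $\varepsilon$-number above $|\prec\upharpoonright b|$; you correctly identify this as the crux, and it is exactly the nontrivial kernel of the ``folklore'' (an ordinal analysis of $\mathsf{ACA}_0$ relative to a well-ordering oracle, via cut elimination in $\omega$-logic). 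Since the authors themselves defer to folklore here, leaning on that fact is fair, but it should be cited rather than left as an unattributed black box. A last cosmetic point: the paper defines $|T|_{\mathsf{WF}}$ via primitive recursive linear orders, so one should add the standard remark that every recursive well-order is provably order-isomorphic to a primitive recursive one, after which your argument transfers verbatim.
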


Whence it follows that:
\begin{theorem}
The $\omega_1^{\mathsf{CK}}$-Kreiselian equivalence partitions are exactly those that are induced by partitions of the recursive $\varepsilon$-numbers.\footnote{Theorem \ref{main-kreisel} was proved by considering extensions of $\KP_1 +\Exists(\bfalpha)$. This strong theory is required only since we are uniformly proving results about $\bfalpha$ ordinal analysis for all $\bfalpha$. One can easily check that the special case of Theorem \ref{main-kreisel} needed for classic $\Pi^1_1$ ordinal analysis goes through when we consider extensions of $\mathsf{ACA}_0$.}
\end{theorem}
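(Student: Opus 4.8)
The plan is to read the theorem off Theorem \ref{main-kreisel}, specialized to $\alpha=\omega_1^{\mathsf{CK}}$, together with Proposition \ref{exact-epsilon}; the only substantive point is to identify the ordinal parameter set $\Ord_{\Sigma^{\omega_1^{\mathsf{CK}}}_1}$ with the set of recursive $\varepsilon$-numbers. First I would invoke the $\mathsf{ACA}_0$ reformulation of Theorem \ref{main-kreisel} flagged in its footnote. Under the recollection that $\Pi^1_1$ coincides with $\Sigma_1^{\omega_1^{\mathsf{CK}}}$---and dually that $\Sigma^1_1$ coincides with $\Pi_1^{\omega_1^{\mathsf{CK}}}$---the domain, soundness condition, and definability condition of Definition \ref{a-Kreis} at $\alpha=\omega_1^{\mathsf{CK}}$ translate into the $\Pi^1_1$-sound, $\Sigma^1_1$-definable extensions of $\mathsf{ACA}_0$, with the two Kreiselian clauses becoming closure under $\Pi^1_1$-equivalence and under adjunction of true $\Sigma^1_1$ sentences. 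Theorem \ref{main-kreisel}, so reformulated, says that the $\omega_1^{\mathsf{CK}}$-Kreiselian equivalence relations are exactly the partitions induced by partitions of $\Ord_{\Sigma^{\omega_1^{\mathsf{CK}}}_1}$, the set of $\Sigma_1^{\omega_1^{\mathsf{CK}}}$-ordinals of these theories.

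Next I would pin down this ordinal set itself. By the identity $|T^\star|_{\Sigma_1^{\omega_1^{\mathsf{CK}}}}=|T|_{\mathsf{WF}}$ recalled in the introduction, the $\Sigma_1^{\omega_1^{\mathsf{CK}}}$-ordinal of each such theory is just its ordinary proof-theoretic ordinal $|T|_{\mathsf{WF}}$. Proposition \ref{exact-epsilon} then says that, as $T$ ranges over the $\Pi^1_1$-sound, $\Sigma^1_1$-definable extensions of $\mathsf{ACA}_0$, these ordinals range exactly over the recursive $\varepsilon$-numbers. Hence $\Ord_{\Sigma^{\omega_1^{\mathsf{CK}}}_1}$ is precisely the set of recursive $\varepsilon$-numbers, so partitions of the one are partitions of the other; feeding this back into the first paragraph gives that the $\omega_1^{\mathsf{CK}}$-Kreiselian equivalence relations are exactly those induced by partitions of the recursive $\varepsilon$-numbers.

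I expect the main thing to verify---everything else being immediate once it is in place---to be the faithfulness of the passage to $\mathsf{ACA}_0$: namely that the $\mathsf{ACA}_0$ version of Theorem \ref{main-kreisel} really does go through, and that the translation $\star$ carries the complexity classes $\Pi^1_1$ and $\Sigma^1_1$, and the attendant soundness and definability conditions, back and forth in a way that preserves the associated proof-theoretic ordinals. Granting the footnote's assertion that the argument of Theorem \ref{main-kreisel} survives restriction to extensions of $\mathsf{ACA}_0$, this faithfulness is exactly the content of the introductory remark on $\star$, and the theorem follows.
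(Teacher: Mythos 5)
Your proposal is correct and follows essentially the same route as the paper, which gives no explicit proof but derives the theorem directly from Theorem \ref{main-kreisel} (in its $\mathsf{ACA}_0$ reformulation flagged in the footnote) together with Proposition \ref{exact-epsilon} and the identity $|T^\star|_{\Sigma_1^{\omega_1^{\mathsf{CK}}}}=|T|_{\mathsf{WF}}$ recalled in the introduction. Your spelling out of the translation of the Kreiselian clauses under the $\Pi^1_1$/$\Sigma_1^{\omega_1^{\mathsf{CK}}}$ correspondence is exactly the content the paper leaves implicit.
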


\section{A connection with $\alpha$-recursion}\label{alpha-recursion}
Another well-established method for measuring the proof-theoretic strength of $T$ is by characterizing the $T$-provably total recursive functions. This yields a computational perspective on ordinal analysis (see $\Pi^0_2$ proof-theoretic ordinals in \cite{beklemishev2005reflection}). In this paper, we have been investigating ``higher'' analogues of the usual $\Pi^1_1$ ordinal analysis. Given the connection between ordinal analysis and provably total recursive functions, it is natural to wonder whether there is a significant connection between these higher analogues of ordinal analysis and generalized computability, e.g., the $T$-provably total $\alpha$-recursive functions.

There are many ways of defining the notion of an $\alpha$-recursive functions. For instance, Pohlers defines  the $\omega_1^{\mathsf{CK}}$-recursive functions as functions $$f\colon L_{\omega_1^{\mathsf{CK}}}\to L_{\omega_1^{\mathsf{CK}}}$$ whose graph is $\Sigma_1^{\omega_1^{\mathsf{CK}}}$-definable. Using this notion, one can easily establish a connection between $|T|_{\Sigma^\bfalpha_1}$ and the provably total $\alpha$-recursive functions of $T$. However, the definition Pohlers uses emphasizes \emph{definability} rather than \emph{computation}. Since our definition of $|T|_{\Sigma^\bfalpha_1}$ straightforwardly concerns syntactic complexity, it is neither difficult nor enlightening to see its connection with the definability-centric notion Pohlers uses. 

Perhaps the most famous definition of $\alpha$-recursion was provided by Sacks in \cite{sacks2017higher}, by using the $\Sigma_1$-truth predicate over $L$. So, again, this is a definability-oriented notion. Although Sacks' formulation is simple and standard, adapting his formulation to formulate a higher analogue of ``gauging the strength of theories via provably total recursive function'' is not too enlightening in the present context. 

Hence, in this section, we will focus instead on \emph{computation-centric} definitions of $\alpha$-recursion. By connecting these with $|T|_{\Sigma^\bfalpha_1}$, we can develop a computational perspective on higher ordinal analysis. In particular, we will turn our attention to operational formulations of $\alpha$-recursion. Hamkins and Lewis \cite{HamkinsLewis2000}, Koepke \cite{Koepke2005TuringforOrdinals}, and others have defined Turing-machine-like formulations of ordinal computation.
Koepke's $\alpha$-machine coincides with $\alpha$-recursion for ordinals \cite{koepke2009ordinal}. Unfortunately, for these operational notions, the only valid inputs to computations are \emph{ordinals}. By contrast, Sacks' notion concerns arbitrary sets (in $L$).

Recently, Passmann \cite{Passmann2022CZFintuitionistic} introduced a set-friendly operational computability notion---\emph{the set register machine}--- to define his set realizability model. He formulated this model of generalized computation in $\ZFC$ with the powerset axiom. In fact, it is possible to formulate this model of computation over $\KP_1 + (V=L)$ without the powerset axiom. We now provide the definition of set register machine for completeness (following Definitions 3.1 and 3.2 in \cite{Passmann2022CZFintuitionistic}).

Note that in this section, $<_L$ is the standard $\Sigma_1$-definable global well-order over $L$. Note that this definition works in the setting $\KP_1$.
\begin{definition}[$\KP_1 + (V=L)$]
    A \emph{set register program} or \emph{set register machine} (abbreviated \emph{SRM}) $p$ is a finite sequence of instructions, where an \emph{instruction} is an instance of one of the following commands (in what follows, $\mathtt{R}_i$ designates the $i$th bit of content of the $i$th register).
    \begin{enumerate}
        \item `$\mathtt{R}_i:=\varnothing$' Replace the contents of the $i$th register with the empty set.
        \item `$\mathtt{ADD}(i,j)$' Replace the content of the $j$th register with $\mathtt{R}_j\cup\{\mathtt{R}_i\}$.
        \item `$\mathtt{COPY}(i,j)$' Replace the content of the $j$th register with $\mathtt{R}_i$.
        \item `$\mathtt{TAKE}(i,j)$' Replace the content of the $j$th register with the $<_L$-least set contained in $\mathtt{R}_i$ if $\mathtt{R}_i\neq\varnothing$. Otherwise, do nothing.
        \item `$\mathtt{REMOVE}(i,j)$' Replace the content of the $j$th register with the set $\mathtt{R}_i\setminus \{\mathtt{R}_j\}$.
        \item `\texttt{IF $\mathtt{R}_i=\varnothing$ THEN GO TO $k$}' Check whether $\mathtt{R}_i=\varnothing$. If so, move to program line $k$. If not, move to the next line.
        \item `\texttt{IF $\mathtt{R}_i\in \mathtt{R}_j$ THEN GO TO $k$}' Check whether $\mathtt{R}_i\in \mathtt{R}_j$. If so, move to program line $k$. If not, move to the next line.
    \end{enumerate}

    For an SRM $p$---where $k\in\mathbb{N}$ is the largest register index appearing in $p$---we say that a sequence $c=\langle l,r_0,\cdots,r_k\rangle$ is a \emph{configuration} if $l$ is a natural number. Intuitively, $l$ is the active program line, and $r_i$ is the content of the $i$th register machine.
    For a configuration $c$, its \emph{successor configuration} $c^+=\langle l^+, r_0^+,\cdots, r_k^+\rangle$ is the following:
    \begin{enumerate}
        \item If $p_l$ is `$\mathtt{R}_i:=\varnothing$', then $r_i^+=\varnothing$, $r_j^+=r_j$ for $j\neq i$, $l^+=l+1$.
        \item If $p_l$ is `$\mathtt{ADD}(i,j)$', then $r_j^+ = r_j\cup\{r_i\}$, $r_m^+=r_m$ if $m\neq j$, $l^+=l+1$.
        \item If $p_l$ is `$\mathtt{COPY}(i,j)$', then $r_j^+ = r_i$, $r_m^+ = r_m$ if $m\neq j$, $l^+ = l+1$.
        \item If $p_l$ is `$\mathtt{TAKE}(i,j)$', then
        \begin{equation*}
            r_j^+ = \begin{cases}
                \text{the $<_L$-minimal element of $r_i$} & \text{if } r_i\neq\varnothing, \\
                r_j & \text{otherwise},
            \end{cases}
        \end{equation*}
        $r_m^+ = r_m$ if $m\neq j$, $l^+=l+1$.
        \item If $p_l$ is `$\mathtt{REMOVE}(i,j)$', then $r_j^+ = r_j\setminus\{r_i\}$, $r_m^+ = r_m$ for $m\neq j$, $l^+ = l+1$.
        \item If $p_l$ is `\texttt{IF $\mathtt{R}_i=\varnothing$ THEN GO TO $k$}', then $r_m^+ = r_m$ for all $m\le k$, and 
        \begin{equation*}
            l^+ = \begin{cases}
                m & \text{if }r_i=\varnothing, \\
                l+1 & \text{otherwise}.
            \end{cases}
        \end{equation*}
        \item If $p_l$ is `\texttt{IF $\mathtt{R}_i\in \mathtt{R}_j$ THEN GO TO $k$}', then $r_m^+ = r_m$ for all $m\le k$, and
        \begin{equation*}
            l^+ = \begin{cases}
                m & \text{if }r_i\in r_j, \\
                l+1 & \text{otherwise}.
            \end{cases}
        \end{equation*}
    \end{enumerate}

    A \emph{computation of $p$ with input $x_0,\cdots, x_j$} is a sequence $\langle d_\xi \mid \xi < \alpha+1 \rangle$ with some successor ordinal length consisting of the configurations of $p$ such that
    \begin{enumerate}
        \item $d_0 = \langle 1,x_0,\cdots, x_j,\varnothing,\cdots,\varnothing\rangle$;
        \item If $\beta<\alpha$, then $d_{\beta+1}=d_\beta^+$;
        \item If $\beta<\alpha$ is a limit and $d_\gamma = \langle l_\gamma, r_\gamma^0,r_\gamma^1,\cdots, r_\gamma^{k-1}\rangle$ for $\gamma<\beta$, then $l_\beta = \liminf_{\gamma<\beta}l_\gamma$,  $r^i_\beta = \liminf_{\gamma<\beta} r^i_\gamma$ for $i<k$, and
        \begin{equation*}
            d_\beta = \langle l_\beta,r_\beta^0,r_\beta^1,\cdots, r_\beta^{k-1} \rangle
        \end{equation*}
        where the limit inferior of sets are computed under the order $<_L$;\footnote{That is, for a sequence of sets $\langle r_\gamma\mid\gamma<\beta\rangle$, $r=\liminf_{\gamma<\beta} r_\gamma$ if and only if $r=\sup\{s\mid s\le r_\gamma\text{ eventually}\}$, where $\sup$ is computed under $<_L$.}
        \item $d_\alpha^+$ is undefined.
    \end{enumerate}
\end{definition}

It can be verified that most arguments in Section 3 of \cite{Passmann2022CZFintuitionistic} are formalizable in $\KP_1 + (V=L)$ as long as the proof does not use the powerset operation. In particular, we can prove the following:
\begin{lemma}[$\KP_1+(V=L)$, Lemma 3.18 of \cite{Passmann2022CZFintuitionistic}]
    \pushQED{\qed}
    There is an SRM $\mathsf{Tr}_{\Delta_0}$, which takes a code for a $\Delta_0$-formula $\phi$ and a parameter $a$, such that $\mathsf{Tr}_{\Delta_0}(\ulcorner\phi\urcorner,a)=1$ if $\vDash_{\Sigma_0}\phi(a)$, and $\mathsf{Tr}_{\Delta_0}(\ulcorner\phi\urcorner,a)=0$ if $\vDash_{\Sigma_0}\lnot\phi(a)$. \qedhere 
\end{lemma}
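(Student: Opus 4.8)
The plan is to realize $\mathsf{Tr}_{\Delta_0}$ as a recursive-descent evaluator on the code $\ulcorner\phi\urcorner$, exploiting the genuinely transfinite computation length of SRMs to handle bounded quantifiers over arbitrary sets. First I would fix a set-theoretic coding of $\Delta_0$-formulas as hereditarily finite sets recording, for each formula, its outermost logical operation (an atomic $\in$ or $=$, a Boolean connective, or a bounded quantifier) together with the codes of its immediate subformulas and the relevant variable indices. Since codes lie in $L_\omega$ and the primitive commands $\mathtt{TAKE}$, $\mathtt{REMOVE}$, $\mathtt{ADD}$, together with the two $\mathtt{IF}$-tests, let one navigate, decompose, and rebuild hereditarily finite sets, all the syntactic bookkeeping---reading off the outermost operation and extracting the subformula codes---is SRM-computable.

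Next I would dispatch on the atomic cases. Membership $x\in y$ is decided outright by the branch instruction testing $\mathtt{R}_i\in\mathtt{R}_j$. Equality $x=y$ reduces to membership: set an auxiliary register to $\varnothing$, apply $\mathtt{ADD}$ to form $\{y\}$, and test whether $x\in\{y\}$, which holds iff $x=y$. The Boolean connectives then reduce to control flow among the $\mathtt{IF}$-tests applied to the already-computed truth values of the immediate subformulas.

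The crux is the bounded quantifiers, compounded by the fact that SRMs have no native subroutine mechanism, so the recursion must be built by hand. I would maintain an explicit evaluation stack, coded as a finite (hence $\omega$-indexed) sequence of frames---each frame recording a subformula code, the current assignment of values drawn from $a$ to the free variables, and a marker for the pending continuation---pushing a frame when descending into a subformula or quantifier body and popping a truth value when a frame completes. A bounded quantifier $\exists x\in y\,\psi$ is then evaluated by copying $y$ into a working register $w$ and looping: if $w=\varnothing$ return $0$; otherwise apply $\mathtt{TAKE}$ to extract the $<_L$-least element, bind it to $x$, recurse on $\psi$, return $1$ if the result is $1$, and otherwise $\mathtt{REMOVE}$ that element from $w$ and repeat; the case $\forall x\in y$ is dual. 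Because $\mathtt{TAKE}$ enumerates the elements of $y$ in strictly increasing $<_L$-order and $w$ shrinks at each pass, the $\liminf$ limit rule ensures that at limit stages the program counter and working registers converge so that the loop resumes correctly and the enumeration exhausts $y$; this is exactly where the successor-ordinal-length nature of the computation is used.

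Finally I would verify by induction on the build-up of $\phi$ that the evaluator halts on every input $(\ulcorner\phi\urcorner,a)$ and outputs $1$ precisely when $\vDash_{\Sigma_0}\phi(a)$ and $0$ precisely when $\vDash_{\Sigma_0}\lnot\phi(a)$, matching the total $\Delta_1$ predicate $\vDash_{\Sigma_0}$ fixed earlier. The main obstacle is this last bundle of tasks carried out over the weak base theory: driving the recursion stack with only the primitive instructions, and proving---using only the $(\Sigma_1\cup\Pi_1)$-foundation and the $\liminf$ limit rule available in $\KP_1+(V=L)$---that the quantifier loops terminate and faithfully implement the intended recursion at limit stages. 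A softer alternative would be to establish once and for all that every $\Delta_1$-definable function on $L$ is SRM-computable and then apply this to $\vDash_{\Sigma_0}$; but the direct construction following Lemma 3.18 of Passmann is more transparent, and is the route I would present.
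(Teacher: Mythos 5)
The paper does not actually prove this lemma: it is stated with a citation to Lemma~3.18 of Passmann together with the remark, just above, that the arguments of Section~3 of that paper formalize in $\KP_1+(V=L)$ because they avoid the power set axiom. Your sketch is essentially a reconstruction of that cited construction---recursive-descent evaluation driven by an explicit finite stack of frames, atomic cases handled by the \texttt{IF $\mathtt{R}_i\in\mathtt{R}_j$} test and singleton formation, Booleans by control flow, and bounded quantifiers by $\mathtt{TAKE}$/$\mathtt{REMOVE}$ loops---so the overall route matches the source the paper is relying on.

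One step would fail as literally written, though. In your quantifier loop you keep the set of not-yet-processed elements, $w_\gamma=y\setminus F_\gamma$, and appeal to the limit rule to conclude that the enumeration ``exhausts $y$'' at limit stages. But the limit rule computes $\liminf$ with respect to $<_L$, not with respect to inclusion: a $\subseteq$-decreasing sequence of sets can oscillate arbitrarily in the $<_L$ order, so $\liminf_{\gamma<\beta}w_\gamma$ need not equal $y\setminus\bigcup_{\gamma}F_\gamma$, and the working register can hold junk at a limit stage. The standard repair (and the one implemented in Passmann's machines) is to make the registers that carry the loop state $<_L$-\emph{increasing} across iterations---for instance, store the current element of $y$ under examination, or the set of already-processed elements---so that the $\liminf$ at a limit is the supremum and the loop resumes at the correct next element, with the remaining part of $y$ recomputed from that register when needed. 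With that change your termination-and-correctness induction goes through; you should also arrange the induction hypothesis (halting plus agreement with $\vDash_{\Sigma_0}$) as an essentially $\Sigma_1$ statement about the existence of a computation sequence and a witnessing transitive set, since $\KP_1$ only provides $(\Sigma_1\cup\Pi_1)$-foundation.
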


It turns out that the halting problem for Passman's set register machine is  $\Sigma_1$-complete over $L$:
\begin{lemma}[$\KP_1+(V=L)$] \label{Lemma: Sigma-L-1 equals to L-TM-halts}
Let $p$ be an SRM. Then the statement `The program $p$ with input $x$ halts' is $\Sigma_1$. Conversely, every $\Sigma_1$ formula $\varphi(x)$ is equivalent to a statement of the form `An SRM $p$ with an input $x$ halts.'

Furthermore, the statement `The program $p$ with an input $x$ halts' is equivalent to a statement of the form $\exists d \; \mathtt{T}(p,x,d)$ for some $\KP_1$-provably $\Delta_1$ formula $\mathtt{T}$, and satisfies if $d\in L_\beta$, then $x$ and the output of $p$ with input $x$ are in $L_\beta$.
\end{lemma}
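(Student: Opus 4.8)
The plan is to prove both directions by manipulating explicit computation sequences. For the forward direction, I would set $\mathtt{T}(p,x,d)$ to assert that $d$ is a \emph{halting computation of $p$ on input $x$}, i.e.\ that $d=\langle d_\xi\mid\xi<\alpha+1\rangle$ is a sequence of configurations satisfying the four clauses defining a computation, with $d_\alpha^+$ undefined. The first task is to check that $\mathtt{T}$ is $\KP_1$-provably $\Delta_1$. The only clause not patently $\Delta_0$ is the successor-configuration relation $c^+=e$, and within it only the $\mathtt{TAKE}$ instruction, which refers to the $<_L$-least element of a register. Since $<_L$ is $\Sigma_1$, the relation ``$y$ is the $<_L$-least element of $r$'' is both $\Sigma_1$ (as $y\in r\wedge\forall z\in r\,(y\leq_L z)$, using $\Sigma_1$-Collection to absorb the bounded universal quantifier) and $\Pi_1$ (as $y\in r\wedge\forall z\in r\,\neg(z<_L y)$), hence $\Delta_1$; likewise the $\liminf$ clause at limit stages is $\Delta_1$ because the relevant $<_L$-suprema are taken within the set $d$. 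Every other quantifier in $\mathtt{T}$ is bounded by $\operatorname{dom}(d)$ or ranges inside $d$, so by \autoref{Lemma: Formula-normalization} the whole of $\mathtt{T}$ is $\Delta_1$. Then ``$p$ halts on $x$'' is by definition $\exists d\,\mathtt{T}(p,x,d)$, which is $\Sigma_1$. The absorption property is immediate: if $d\in L_\beta$ then, since $L_\beta$ is transitive, every component of $d_0$ and of the terminal configuration lies in $L_\beta$; in particular $x$ and the output of $p$ on $x$ are in $L_\beta$.

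For the converse, let $\varphi(x)\equiv\exists y\,\psi(x,y)$ with $\psi\in\Delta_0$, and I would build a single SRM $p$ that halts on exactly those $x$ with $\varphi(x)$ by performing an unbounded search for a witness. The machine maintains an ordinal counter in one register; because an ordinal-valued register incremented by one at each step has $\liminf$ equal to $\beta$ at a limit stage $\beta$ (the standard mechanism, valid since $<_L$ agrees with $\in$ on ordinals), the counter steps correctly through limits and runs cofinally through $\Ord$. At stage $\xi$ the machine forms $L_\xi$ (using that $x=L_\xi$ is $\Delta_1$ and building the hierarchy stage-by-stage, taking unions at limits) and then iterates over the elements of $L_\xi$ by repeatedly applying $\mathtt{TAKE}$ and $\mathtt{REMOVE}$, calling the $\Delta_0$-truth machine $\mathsf{Tr}_{\Delta_0}$ from the preceding lemma on $\ulcorner\psi\urcorner$ with the pair $\langle x,y\rangle$. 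If some call returns $1$, the machine halts; otherwise it increments $\xi$ and continues. Correctness rests on two facts: $\mathsf{Tr}_{\Delta_0}$ returns $1$ exactly on the true $\Delta_0$-instances (the preceding lemma); and, since $V=L$, every set $y$ occurs in some $L_\xi$ and is therefore eventually tested. Hence $p$ halts on $x$ iff some $y$ satisfies $\psi(x,y)$, i.e.\ iff $\varphi(x)$.

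I expect the completeness direction to be the main obstacle. The delicate points are (i) programming the stage-by-stage construction of the constructible hierarchy as an actual SRM, including the correct behavior of the register holding $L_\xi$ at limit stages under the $\liminf$ rule, and (ii) verifying, over $\KP_1+(V=L)$ rather than externally, that the search halts precisely when a witness exists. Both largely amount to formalizing the relevant portions of Passmann's development in the weaker base theory: the $\liminf$ limit rule must be exploited carefully so that the enumeration neither skips nor stalls, and the $\mathsf{Tr}_{\Delta_0}$-correctness and $\Sigma_1$-Collection facts already established are what license the transfer from external truth of $\varphi(x)$ to the halting of $p$.
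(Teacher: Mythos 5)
Your proposal is correct and takes essentially the same route as the paper's own (quite brief) proof sketch: the forward direction defines $\mathtt{T}(p,x,d)$ as ``$d$ is a computation of $p$ with input $x$'' and checks $\Delta_1$-ness, and the converse builds an SRM performing an unbounded search for a witness tested by $\mathsf{Tr}_{\Delta_0}$. You supply more detail than the paper does (the $\Delta_1$-ness of the $\mathtt{TAKE}$ and $\liminf$ clauses, and the stage-by-stage enumeration of $L_\xi$ implementing the search), all of which is consistent with the paper's argument.
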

\begin{proof}[Sketch of the proof]
    Formally, `$M$ halts with an input $x$' is equivalent to the existence of a computation $d$ of $p$ with input $x$, and we can see that $d$ being a computation is $\Delta_1$. If we take $\mathtt{T}(p,x,d)$ to be
    \begin{equation*}
        \mathtt{T}(p,x,d):= \text{$d$ is a computation of $p$ with input $x$,}
    \end{equation*}
    and if $d\in L_\beta$, then we have that the output of $p$ under $x$ is in $L_\beta$ since $d$ contains the output of the computation.
    
    Conversely, suppose that $\exists y \psi(x,y)$ is a $\Sigma_1$-formula, where $\psi$ is a bounded formula. Now consider an SRM $p$ trying to find $y$ such that $\mathsf{Tr}_{\Delta_0}(\ulcorner\psi\urcorner,x,y)$ holds via unbounded search. Then `$p$ halts with input $x$' is equivalent to $\exists y \psi(x,y)$.
\end{proof}

Also, we can see that the register machine computable class functions coincide with $\Sigma_1$-definable class functions:
\begin{lemma}[$\KP_1+(V=L)$]
    A class partial function $F\colon L\to L$ is $\Sigma_1$-definable (in the sense that $x\in \operatorname{dom} F$ and $y=F(x)$ are $\Sigma_1$) if and only if there is an SRM $p$ and a parameter $a$ such that $x\in\operatorname{dom} F$ iff $p(x,a)$ halts and $F(x)=p(x,a)$ for all $x$ in the domain.
    
    In fact, we can effectively determine $p$ from the $\Sigma_1$-definition of $F$.
\end{lemma}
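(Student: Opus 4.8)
The plan is to prove the two directions separately, with the easy one falling out of \autoref{Lemma: Sigma-L-1 equals to L-TM-halts}. For the implication from SRM-computability to $\Sigma_1$-definability, suppose $F$ is computed by an SRM $p$ with parameter $a$. By \autoref{Lemma: Sigma-L-1 equals to L-TM-halts} the statement that $p$ halts on input $\langle x,a\rangle$ is equivalent to $\exists d\,\mathtt{T}(p,\langle x,a\rangle,d)$ with $\mathtt{T}$ a $\Delta_1$ formula, and the computation $d$ records its own final configuration and hence the output. Thus $x\in\operatorname{dom} F$ is already $\Sigma_1$, and $y=F(x)$ can be written as $\exists d\,[\mathtt{T}(p,\langle x,a\rangle,d)\wedge \mathsf{out}(d)=y]$, where $\mathsf{out}(d)=y$ is the $\Delta_0$ predicate that reads the designated output register from the last configuration coded in $d$. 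By \autoref{Lemma: Formula-normalization} this formula is $\Sigma_1$, so the graph of $F$ is $\Sigma_1$.

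For the converse I would fix the $\Sigma_1$ graph and, using $\Sigma$-reflection, write $y=F(x)\iff \exists v\,\rho(x,y,v,a)$ for a $\Delta_0$ formula $\rho$ and a set parameter $a$ (which also absorbs any parameters of the definition). Since $F$ is functional, for each $x$ at most one $y$ satisfies $\exists v\,\rho(x,y,v,a)$, and $x\in\operatorname{dom} F$ holds iff some pair $(y,v)$ satisfies $\rho$. The SRM $p$ I would construct performs an unbounded search over $L$: it enumerates the sets of $L$ in $<_L$-order, and for each candidate pair $(y,v)$ it evaluates $\mathsf{Tr}_{\Delta_0}(\ulcorner\rho\urcorner,\langle x,y,v,a\rangle)$ using the $\Delta_0$-truth machine; when this returns $1$ it copies $y$ into the output register and halts, and otherwise it runs forever. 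Because any witnessing pair occurs at some stage of the enumeration, $p(x,a)$ halts precisely when $\exists y\exists v\,\rho(x,y,v,a)$, i.e.\ precisely when $x\in\operatorname{dom} F$, and in that case the unique correct value is returned. The program $p$ is a fixed search template into which $\ulcorner\rho\urcorner$ and $a$ are substituted, so it is obtained effectively from the $\Sigma_1$ definition, which yields the final clause.

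The hard part is the enumeration of $L$ that this search presupposes, i.e.\ exhibiting an SRM that computes $\beta\mapsto L_\beta$ (and hence runs through every set of $L$). I would obtain the successor step $L_\beta\mapsto L_{\beta+1}$ by iterating $\mathsf{Tr}_{\Delta_0}$ over all $\Delta_0$-formula codes and all finite parameter tuples drawn from $L_\beta$, which uses only $\Delta_0$-Separation and $\Sigma_1$-Collection and in particular avoids the powerset axiom; this is why the relevant portion of Passmann's Section~3 survives the passage to $\KP_1+(V=L)$. The genuinely delicate point is the limit stage: one must verify that the $\liminf$-under-$<_L$ rule governing register contents at limit ordinals reproduces $L_\lambda=\bigcup_{\beta<\lambda}L_\beta$, which rests on the sequence $\langle L_\beta\rangle$ being $<_L$-increasing and suitably continuous at limits. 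Checking this convergence, and confirming more generally that the SRM constructions of Passmann's Section~3 are formalizable without powerset, is the crux of the argument; once it is in place, the two directions above complete the proof.
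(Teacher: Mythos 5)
Your proposal matches the paper's own proof in both directions: the paper likewise converts the $\Sigma_1$ graph $\exists z\,\phi_0(x,y,z,a)$ into an unbounded $<_L$-search that tests candidate ordered pairs with $\mathsf{Tr}_{\Delta_0}$ and halts on success, and conversely expresses $F(x)=y$ as $\exists d\,[\mathtt{T}(p,\langle x,a\rangle,d)\land y=\mathtt{U}(d)]$ where $\mathtt{U}$ extracts the output from a computation. The only difference is one of emphasis: the delicate point you isolate---that the $<_L$-enumeration of $L$ and the $\liminf$ behavior at limit stages are available in $\KP_1+(V=L)$ without powerset---is exactly what the paper delegates to the (powerset-free portions of) Passmann's Section~3 rather than reproving, so your proof is correct and essentially identical in approach.
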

\begin{proof}[Sketch of the proof]
    In one direction, suppose that the graph of $F\colon L\to L$ is defined by a formula $\exists z \phi_0(x,y,z,a)$ for some $\Delta_0$ formula $\phi_0$ and a parameter $a$.
    Then consider the following SRM $p(x,a)$: Enumerate all sets under the $<_L$-order. If the set we enumerate is not an ordered pair, skip this set. If the set we enumerate is of the form $\langle y,z\rangle$, compute $\Tr_{\Delta_0}(\ulcorner \phi_0\urcorner, x,y,z,a)$.
    If $\Tr_{\Delta_0}(\ulcorner \phi_0\urcorner, x,y,z,a)=1$, $p(x,a)$ returns $y$. Otherwise, proceed with the computation.
    Then by definition of $p$, $p(x,a)$ halts iff $x\in \operatorname{dom} F$, and $p(x,a)=y$ iff $F(x)=y$.

    Conversely, suppose that $F$ is a class function defined by an SRM $p(x,a)$ with parameter $a$. We can see that there is a $\Delta_1$-definable function $\mathtt{U}$, which reads off a computation $d$ and extracts its output. Then $F(x)=y$ if and only if $\exists d\,\mathtt{T}(p,\langle x,a\rangle,d)\land y=\mathtt{U}(d)$, which is $\Sigma_1$.
\end{proof}

For an admissible $\alpha$, let us define the \emph{$\alpha$-set register machine} by relativizing its definition over $L_\alpha$. 
By relativizing the above results to an admissible $\alpha$, we get the following:
\begin{corollary}\label{Lemma: Sigma-alpha-1 equals to alpha-TM-halts}
\pushQED{\qed}
Let $p$ be an $\alpha$-SRM. Then the statement `The program $p$ with input $x$ halts' is $\Sigma^\alpha_1$. Conversely, every $\Sigma_1$ formula $\varphi(x)$ is equivalent to a statement of the form `An $\alpha$-SRM $p$ with an input $x$ halts.'

Furthermore, the statement `The program $p$ with an input $x$ halts' is equivalent to a statement of the form $\exists d \; \mathtt{T}^\alpha(p,d,x)$ for some $\Delta_1^\alpha$ formula $\mathtt{T}^\alpha$, and satisfies if $d\in L_\beta$ for $\beta<\alpha$, then $x$ and the output of $p$ with input $x$ are in $L_\beta$.
\qedhere 
\end{corollary}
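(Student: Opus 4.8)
The plan is to obtain the corollary by \emph{relativizing} \autoref{Lemma: Sigma-L-1 equals to L-TM-halts} to $L_\alpha$. The whole argument rests on two observations. First, since $\alpha$ is admissible, $L_\alpha \models \KP$, and hence $L_\alpha \models \KP_1 + (V=L)$; consequently every sentence that $\KP_1 + (V=L)$ proves holds when relativized to $L_\alpha$. Second, by the very definitions of $\Sigma^\alpha_1$ and $\Delta^\alpha_1$, the relativization $F^{L_\alpha}$ of a $\Sigma_1$ (resp.\ $\KP_1$-provably $\Delta_1$) formula $F$ is $\Sigma^\alpha_1$ (resp.\ $\Delta^\alpha_1$); and the $\alpha$-SRM was defined precisely so that `the $\alpha$-SRM $p$ halts on $x$' is the relativization to $L_\alpha$ of `the SRM $p$ halts on $x$'.

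First I would set $\mathtt{T}^\alpha := \mathtt{T}^{L_\alpha}$, where $\mathtt{T}$ is the $\KP_1$-provably $\Delta_1$ predicate `$d$ is a computation of $p$ on input $x$' supplied by \autoref{Lemma: Sigma-L-1 equals to L-TM-halts}. Then $\mathtt{T}^\alpha$ is $\Delta^\alpha_1$, and unfolding the definition of the $\alpha$-SRM, the statement `$p$ halts on $x$' is literally $\bigl(\exists d \; \mathtt{T}(p,x,d)\bigr)^{L_\alpha}$, i.e.\ $\exists d \; \mathtt{T}^\alpha(p,x,d)$ with the displayed existential ranging over $L_\alpha$. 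As the relativization of a $\Sigma_1$ formula this is $\Sigma^\alpha_1$, which yields the first assertion together with the $\exists d \; \mathtt{T}^\alpha$ normal form of the \emph{furthermore} clause.

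For the converse, given a $\Sigma_1$ formula $\varphi(x) \equiv \exists y \, \psi(x,y)$ with $\psi$ bounded, \autoref{Lemma: Sigma-L-1 equals to L-TM-halts} produces---uniformly in $\varphi$, via an unbounded $<_L$-search driven by $\mathsf{Tr}_{\Delta_0}$---an SRM $p$ with $\KP_1 + (V=L) \vdash \varphi(x) \leftrightarrow \text{`$p$ halts on $x$'}$. Relativizing this provable equivalence to $L_\alpha$ gives $\varphi^{L_\alpha}(x) \leftrightarrow \text{`the $\alpha$-SRM $p$ halts on $x$'}$, and the same $p$ serves since its description does not depend on $\alpha$. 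The absoluteness clause transfers verbatim: the initial configuration $d_0$ records $x$ and the terminal configuration records the output, so both $x$ and the output lie in $\TC(d)$; by transitivity of $L_\beta$, if $d \in L_\beta$ with $\beta < \alpha$ then both belong to $L_\beta$.

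The main obstacle---and the only place where admissibility is genuinely used rather than mere $\KP_1$---is verifying that $\mathtt{T}^\alpha$ is honestly $\Delta^\alpha_1$ and that the relativized notion of computation coincides with the $\alpha$-SRM notion. The former relies on the normalization machinery of \autoref{Lemma: Formula-normalization}, which converts the $\underline{\Sigma}_1$/$\underline{\Pi}_1$ descriptions of `$d$ is a computation' into genuine $\Sigma_1$/$\Pi_1$ form using $\Sigma_1$-Collection; this step is available inside $L_\alpha$ exactly because $L_\alpha \models \KP$. The latter requires noting that the $\liminf$-under-$<_L$ clauses at limit stages are bounded searches within the sequence $d$ itself, and that $<_L \upharpoonright L_\alpha$ agrees with the internal global well-order of $L_\alpha$ because $L_\alpha \models (V=L)$. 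With these in hand a halting $\alpha$-computation is nothing but a halting computation that happens to be an element of $L_\alpha$ (necessarily of length $<\alpha$), so the relativization is faithful and the corollary follows.
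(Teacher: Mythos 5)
Your proposal is correct and matches the paper's intent exactly: the paper states this corollary without proof, presenting it as immediate from relativizing \autoref{Lemma: Sigma-L-1 equals to L-TM-halts} to $L_\alpha$ using $L_\alpha \models \KP_1 + (V=L)$, which is precisely the argument you give. Your additional care about the $\Delta^\alpha_1$ normalization inside $L_\alpha$ and the faithfulness of the relativized computation notion is a reasonable fleshing-out of details the paper leaves implicit.
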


\begin{corollary}
    A partial function $F\colon L_\alpha\to L_\alpha$ is $\Sigma_1^\alpha$-definable if and only if there is an $\alpha$-SRM $p$ such that $F(x)=p(x)$ for all $x\in L_\alpha$.
    
    In fact, we can effectively determine $p$ from the $\Sigma_1^\alpha$-definition of $F$, and if $F$ is $\Sigma_1^\alpha$-definable without parameters, then the corresponding $p$ also does not have other parameters.
\end{corollary}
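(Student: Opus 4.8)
The plan is to relativize to $L_\alpha$ the argument that established the analogous equivalence for class partial functions $F\colon L\to L$, exploiting the fact that for admissible $\alpha$ the structure $L_\alpha$ is itself a model of $\KP_1 + (V=L)$. Consequently the $<_L$-enumeration, the $\Delta_0$-truth machine $\Tr_{\Delta_0}$, and the computation predicate all have the sensible $\alpha$-relativizations already recorded in \autoref{Lemma: Sigma-alpha-1 equals to alpha-TM-halts}. I would prove the two directions separately and then read off the uniformity and parameter-free claims.

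For the forward direction, suppose $F$ is $\Sigma_1^\alpha$-definable, so its graph $y=F(x)$ is expressed by $(\exists z\,\phi_0(x,y,z,a))^{L_\alpha}$ for some $\Delta_0$-formula $\phi_0$ and parameter $a\in L_\alpha$; since $\phi_0$ is $\Delta_0$ and $L_\alpha$ is transitive, this is just $\exists z\in L_\alpha\,\phi_0(x,y,z,a)$. I would then take the $\alpha$-SRM $p(x,a)$ that enumerates the elements of $L_\alpha$ in increasing $<_L$-order, skips any enumerated set that is not an ordered pair, and for each pair $\langle y,z\rangle$ computes $\Tr_{\Delta_0}(\ulcorner\phi_0\urcorner, x, y, z, a)$ with the relativized $\Delta_0$-truth machine, returning $y$ the first time the value is $1$. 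Then $p(x,a)$ returns the $y$ of the $<_L$-least pair $\langle y,z\rangle$ satisfying $\phi_0$; since $F$ is a function this $y$ equals $F(x)$, and $p(x,a)$ halts exactly when $x\in\operatorname{dom} F$.

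For the backward direction, let $p$ be an $\alpha$-SRM. By \autoref{Lemma: Sigma-alpha-1 equals to alpha-TM-halts} the predicate ``$p$ halts on input $x$'' is equivalent to $\exists d\,\mathtt{T}^\alpha(p, d, x)$ for a $\Delta_1^\alpha$-formula $\mathtt{T}^\alpha$, and a $\Delta_1^\alpha$-definable function $\mathtt{U}$ reads the output off a halting computation. Hence
\begin{equation*}
    F(x) = y \iff \exists d \in L_\alpha\,\big(\mathtt{T}^\alpha(p, d, x) \land y = \mathtt{U}(d)\big),
\end{equation*}
which is $\Sigma_1^\alpha$, so $F$ is $\Sigma_1^\alpha$-definable. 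Both translations are uniform, so $p$ can be read off effectively from the $\Sigma_1^\alpha$-definition of $F$, and since the only parameter occurring in $p$ is $a$, a parameter-free definition of $F$ yields a parameter-free program.

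I expect the main obstacle to be verifying that the machine constructed in the forward direction is genuinely an $\alpha$-SRM, i.e., that whenever a witness exists its computation stays inside $L_\alpha$ and halts at some ordinal length below $\alpha$. This is precisely where admissibility of $\alpha$ is needed: the $<_L$-least witnessing pair $\langle y,z\rangle$ occupies some $<_L$-position below $\alpha$, each intervening $\Tr_{\Delta_0}$ subcomputation halts, and $\Sigma_1$-admissibility (i.e., $\Sigma_1$-boundedness in $L_\alpha$) bounds the transfinite concatenation of these subcomputations below $\alpha$. The absoluteness clause of \autoref{Lemma: Sigma-alpha-1 equals to alpha-TM-halts}---that $d\in L_\beta$ forces the input and output into $L_\beta$---then guarantees that no configuration ever leaves $L_\alpha$, so the computation witnesses the required $\alpha$-SRM behaviour.
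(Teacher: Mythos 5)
Your proposal is correct and follows exactly the route the paper intends: the paper derives this corollary with no separate proof, simply ``by relativizing the above results to an admissible $\alpha$,'' and your forward and backward constructions are verbatim relativizations of the sketch given for the class-function lemma (enumerate pairs in $<_L$-order and test with $\Tr_{\Delta_0}$; conversely use $\mathtt{T}^\alpha$ and the output-extractor $\mathtt{U}$). Your closing remarks on why admissibility keeps the search computation inside $L_\alpha$ supply detail the paper leaves implicit, but they do not change the approach.
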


That is, every $\Sigma^\alpha_1$-set is  $\alpha$-register machine computably enumerable, and every $\Sigma^\alpha_1$-definable function is computed by an $\alpha$-SRM program.

Now we fix a $\KP_1$-provably admissible representable $\alpha$ with a representation $\bfalpha$. 
\begin{definition} Let $T$ be a $\Sigma^\bfalpha_1$-sound theory extending $\KP_1$ + $\Exists(\bfalpha)$.
    \begin{enumerate}
        \item Let $p$ be an $\bfalpha$-SRM. Define $\height(p)$ as the least ordinal $\beta<\alpha$ such that for every $x\in L_\beta$, if a computation $d$ of $p$ with an input $x$ halts, then $d\in L_\beta$.
        
        \item $|T|_{\bfalpha\text{-ht}}$ is the supremum of all $\height(p)$ for an $\alpha$-SRM computing a constant function such that $T$ proves $p$ halts.
        
        \item $|T|_{\bfalpha\text{-cl}}$ is the least $\beta$ satisfying $f[L_\beta]\subseteq L_\beta$ for all $T$-provably $\bfalpha$-recursive $f$ definable without parameters.
        
        Equivalently, $|T|_{\bfalpha\text{-cl}}$ is the least $\beta$ such that for every $\alpha$-SRM $p$, if $T$ proves $p$ halts for every input $x\in L_\alpha$, then the output of $p$ with input in $L_\beta$ is also in $L_\beta$.  
    \end{enumerate} 
\end{definition}

Following the lead of \cite[Lemma 1.2.3]{Pohlers1998Subsystems}, we get the following:
\begin{proposition}
     Let $\bfalpha$ be a representable ordinal that is provably admissible over $\KP_1$ and let $T$ be a $\Sigma^\bfalpha_1$-sound theory extending $\KP_1 + \Exists(\bfalpha)$.
     Then $|T|_{\Sigma^\bfalpha_1}=|T|_{\bfalpha\text{-}{\rm ht}}=|T|_{\bfalpha\text{-{\rm cl}}}=|T|_{\Pi^\bfalpha_2}$.
\end{proposition}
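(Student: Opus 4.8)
The plan is to establish the four equalities through a cycle of inequalities, exploiting the register-machine characterizations from \autoref{Lemma: Sigma-alpha-1 equals to alpha-TM-halts} together with the identity $|T|_{\Sigma^\bfalpha_1}=|T|_{\bfalpha\barrepr}$ from \autoref{Lemma: equality-invariants}, which I will treat as the hub of the argument. The only cost-free inequality is $|T|_{\Sigma^\bfalpha_1}\le|T|_{\Pi^\bfalpha_2}$: every $\Sigma_1$ formula is $\KP_1$-provably equivalent to a $\Pi_2$ formula (prepend a dummy universal quantifier), so the defining constraints for $|T|_{\Pi^\bfalpha_2}$ include those for $|T|_{\Sigma^\bfalpha_1}$. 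Everything else must be routed through the register-machine ordinals and then collapsed back to $|T|_{\Sigma^\bfalpha_1}$.

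For the first leg I would show $|T|_{\Sigma^\bfalpha_1}\le|T|_{\bfalpha\text{-ht}}$. Given a $\Delta_0$ formula $\psi$ with $T\vdash(\exists x\,\psi(x))^{L_\bfalpha}$, take the $\bfalpha$-SRM $p$ that ignores its input and searches $<_L$-increasingly for a witness via $\mathsf{Tr}_{\Delta_0}$; this $p$ computes a constant function and ``$p$ halts'' is $T$-provable. By $\Sigma^\bfalpha_1$-soundness it genuinely halts, its output witness lies in $L_{\height(p)}\subseteq L_{|T|_{\bfalpha\text{-ht}}}$, and $\Delta_0$-absoluteness gives $L_{|T|_{\bfalpha\text{-ht}}}\models\exists x\,\psi(x)$; hence $|T|_{\bfalpha\text{-ht}}$ reflects all provable $\Sigma^\bfalpha_1$ facts. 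Next, $|T|_{\bfalpha\text{-ht}}\le|T|_{\bfalpha\text{-cl}}$ is a short trick: from a $T$-provably halting constant machine $p$, build the total machine $q$ that on any input $x$ simulates $p$ and outputs the entire computation $d_x$; then $q$ is $T$-provably total, so by the closure property every $d_x$ with $x\in L_{|T|_{\bfalpha\text{-cl}}}$ lands in $L_{|T|_{\bfalpha\text{-cl}}}$, which is exactly the defining condition giving $\height(p)\le|T|_{\bfalpha\text{-cl}}$. The symmetric leg $|T|_{\Pi^\bfalpha_2}\le|T|_{\bfalpha\text{-cl}}$ is analogous to the first: a provable $\Pi^\bfalpha_2$ statement $\forall x\exists y\,\psi(x,y)$ makes the least-witness machine $x\mapsto\mu y.\,\psi(x,y)$ $T$-provably total, and the closure property of $L_{|T|_{\bfalpha\text{-cl}}}$ then yields $L_{|T|_{\bfalpha\text{-cl}}}\models\forall x\exists y\,\psi$.

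The decisive step is the collapse $|T|_{\bfalpha\text{-cl}}\le|T|_{\Sigma^\bfalpha_1}$. Here I would fix a $T$-provably total $\bfalpha$-SRM $p$ and a $T$-provable representable $\gamma$, these being cofinal in $|T|_{\bfalpha\barrepr}=|T|_{\Sigma^\bfalpha_1}$ by \autoref{Lemma: limit-case}. Since $L_\bfalpha$ is $\KP_1$-provably admissible, $T$ proves---via $\Sigma_1$-Collection and $\Delta_0$-Separation inside $L_\bfalpha$---that the restricted graph $p\upharpoonright L_\gamma$ exists as a single set. The statement ``$p\upharpoonright L_\gamma$ exists'' is $\Sigma_1$, so it is a $\Sigma^\bfalpha_1$-theorem of $T$; by the definition of $|T|_{\Sigma^\bfalpha_1}$ this set already appears in $L_{|T|_{\Sigma^\bfalpha_1}}$, whence its range---and in particular every value $p(x)$ for $x\in L_\gamma$---lies in the transitive set $L_{|T|_{\Sigma^\bfalpha_1}}$. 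Letting $\gamma$ vary shows $L_{|T|_{\Sigma^\bfalpha_1}}$ is closed under $p$, so $|T|_{\bfalpha\text{-cl}}\le|T|_{\Sigma^\bfalpha_1}$. Chaining the inequalities gives $|T|_{\Sigma^\bfalpha_1}\le|T|_{\bfalpha\text{-ht}}\le|T|_{\bfalpha\text{-cl}}\le|T|_{\Sigma^\bfalpha_1}$ and $|T|_{\Sigma^\bfalpha_1}\le|T|_{\Pi^\bfalpha_2}\le|T|_{\bfalpha\text{-cl}}$, forcing all four quantities to coincide.

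I expect this collapse to be the main obstacle, since it is where the ostensibly $\Pi_2$-flavored totality of a machine is converted into a genuinely $\Sigma_1$ existence claim; the whole equality ultimately rests on the admissibility of $L_\bfalpha$ (the availability of $\Sigma_1$-Collection) and is the set-theoretic analogue of the classical coincidence of $\Pi^0_2$ and $\Sigma^0_1$ proof-theoretic ordinals. The remaining care points are bookkeeping: checking that ``$p$ halts'' and ``$p$ is total'' are expressed by the $\Sigma^\bfalpha_1$ and relativized formulas supplied by \autoref{Lemma: Sigma-alpha-1 equals to alpha-TM-halts}, that $\Delta_0$ formulas are absolute between the relevant $L_\beta$'s, and that $\height(p)$ genuinely captures the full halting computation so that outputs are contained in $L_{\height(p)}$.
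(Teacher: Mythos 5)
Your cycle of inequalities is organized differently from the paper's. The paper proves $|T|_{\bfalpha\text{-ht}}\le|T|_{\bfalpha\text{-cl}}\le|T|_{\Pi^\bfalpha_2}$ and $|T|_{\Sigma^\bfalpha_1}\le|T|_{\bfalpha\text{-ht}}$, and then closes the loop by \emph{citing} Pohlers' Lemma 1.2.3 for $|T|_{\Sigma^\bfalpha_1}=|T|_{\Pi^\bfalpha_2}$; the non-trivial machine-theoretic step there is $|T|_{\bfalpha\text{-cl}}\le|T|_{\Pi^\bfalpha_2}$ (the totality assertion $\forall x\,\exists d\,\mathtt{T}(p,d,x)$ is $\Pi^\bfalpha_2$, hence holds in $L_{|T|_{\Pi^\bfalpha_2}}$). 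You instead prove the collapse $|T|_{\bfalpha\text{-cl}}\le|T|_{\Sigma^\bfalpha_1}$ directly: for each $T$-provable representable $\bfgamma$, $\Sigma$-Replacement inside the provably admissible $L_\bfalpha$ makes ``$p\upharpoonright L_\bfgamma$ exists'' a $T$-provable $\Sigma^\bfalpha_1$ sentence, so the restricted graph (which is genuinely the right graph, since halting computations are $\Delta_1$ and unique) lands in the transitive set $L_{|T|_{\Sigma^\bfalpha_1}}$; cofinality of such $\bfgamma$ (Lemmas \ref{Lemma: limit-case} and \ref{Lemma: equality-invariants}) finishes it. This is essentially Pohlers' $\Pi_2$-to-$\Sigma_1$ reduction reproved in machine language, so your argument is self-contained where the paper defers to a citation; that is a genuine gain. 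Your first two legs match the paper's, and your detour through a machine $q$ that outputs the entire computation $d_x$ actually repairs a mismatch the paper glosses over, namely that $\height(p)$ constrains the whole computation while $|T|_{\bfalpha\text{-cl}}$ only constrains outputs.

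The one genuine gap is your leg $|T|_{\Pi^\bfalpha_2}\le|T|_{\bfalpha\text{-cl}}$. From $T\vdash(\forall x\,\exists y\,\psi(x,y))^{L_\bfalpha}$ you conclude that the least-witness machine is ``$T$-provably total'' and then apply the closure property of $L_{|T|_{\bfalpha\text{-cl}}}$. But closure only says that \emph{if} $p$ halts on an input in $L_{|T|_{\bfalpha\text{-cl}}}$ then its output stays inside; it does not supply halting. Totality of $p$ is a $\Pi^\bfalpha_2$ assertion, and $T$ is only assumed $\Sigma^\bfalpha_1$-sound, so a $T$-provably total machine need not actually be total --- exactly the situation you are trying to rule out. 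You cannot say this step is ``analogous to the first,'' since there the halting claim was a $\Sigma^\bfalpha_1$ sentence and soundness applied directly. The fix is available from your own collapse step: for $x\in L_\bfgamma$ with $\bfgamma$ a $T$-provable representable ordinal, ``$p\upharpoonright L_\bfgamma$ exists'' is a $T$-provable $\Sigma^\bfalpha_1$ sentence, hence \emph{true} by $\Sigma^\bfalpha_1$-soundness, so $p$ really does halt on all of $L_\bfgamma$ with output in $L_{|T|_{\Sigma^\bfalpha_1}}=|T|_{\bfalpha\text{-cl}}$ (using your legs (a)--(c)). Once you route the halting fact through that $\Sigma_1$ sentence rather than through the raw closure property, the leg --- and with it the whole four-way equality --- goes through.
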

\begin{proof}
    \cite{Pohlers1998Subsystems}, Lemma 1.2.3 proves $|T|_{\Sigma^\bfalpha_1}=|T|_{\Pi^\bfalpha_2}$, so let us prove the following:

        $|T|_{\Sigma^\bfalpha_1}\le |T|_{\bfalpha\text{-ht}}$: To show this, it suffices to show $|T|_{\bfalpha\barrepr} \le |T|_{\bfalpha-ht}$. 
        Let $\bfgamma$ be a representation such that $T\vdash \exists \Exists(\bfgamma)^{L_\bfalpha}$. By \autoref{Lemma: Sigma-alpha-1 equals to alpha-TM-halts}, we can construct a following $\bfalpha$-SRM $p$:
        $p$ tries to find an ordinal $\xi<\alpha$ such that $R^\bfgamma(\xi)$ holds, and halts if there is such ordinal and returns that ordinal. We can decide $R^\bfgamma(\xi)$ holds within time $<\bfalpha$ by computing $R^\bfgamma(\xi)$ and $\lnot R^\bfgamma_*(\xi)$ simultaneously, where $R^\bfgamma_*$ is the $\Pi_1$ statement for $R^\bfgamma$ defined in \eqref{Formula: Pi 1 representation}.
        One of them must halt within time $<\bfalpha$, and we can decide the validity of $R^\bfgamma(\xi)$ by using that computational result.
        
        Since $T$ proves $\Exists(\bfgamma)^{L_\bfalpha}$, $T$ proves $p$ halts, and $\height(p)=\xi+1$ for the unique $\xi$ such that $L\models R^\bfgamma(\xi)$. That is, $\xi<|T|_{\bfalpha\text{-ht}}$.
        Then the desired inequality follows from the definition of $|T|_{\bfalpha\barrepr}$.

        $|T|_{\bfalpha\text{-ht}}\le |T|_{\bfalpha\text{-cl}}$: `$p$ halts' for an $\bfalpha$-SRM $p$ computing a constant function is a special case of `$p$ halts for every input $x$.' Thus, every $T$-provably-halting $\alpha$-SRM that is computing a constant function outputs a value below $|T|_{\bfalpha\text{-cl}}$.

        $|T|_{\bfalpha\text{-cl}}\le |T|_{\Pi^\bfalpha_2}$: 
        By \autoref{Lemma: Sigma-alpha-1 equals to alpha-TM-halts}, `$p$ halts with input $x$' is equivalent to $\exists d\in L_\bfalpha \; \mathtt{T}(p,d,x)$. Thus the assertion `For every $x\in L_\bfalpha$, $p$ halts with input $x$' is $\Pi^\bfalpha_2$. 
        Thus if $T$ proves $p$ halts, then by the definition of $|T|_{\Pi^\bfalpha_2}$, then $L_{|T|_{\Pi^\bfalpha_2}}\models \forall x \; \exists d \; \mathtt{T}(p,d,x)$. By \autoref{Lemma: Sigma-alpha-1 equals to alpha-TM-halts} again, we infer that the output of $p$ belongs to $L_{|T|_{\Pi^\bfalpha_2}}$. That is, $L_{|T|_{\Pi^\bfalpha_2}}$ is closed under computation by $p$.
   
    Hence we get 
    \begin{equation*}
        |T|_{\bfalpha-ht}\le |T|_{\bfalpha-cl}\le |T|_{\Pi^\bfalpha_2}=|T|_{\Sigma^\bfalpha_1}\le|T|_{\bfalpha-ht}. \qedhere 
    \end{equation*}
\end{proof}

\section{Comparing the strength of theories}\label{comparison-theorem}
Now let us examine the set-theoretic analogue of the second main theorem in \cite{walsh2023characterizations}.
\begin{definition}
    We define \emph{provability in the presence of an oracle for $\Pi^\bfalpha_1$-truths} $\vdash^{\Pi^\bfalpha_1}$ as follows: $T\vdash^{\Pi^\bfalpha_1} \varphi$ if there is a true $\Pi^\bfalpha_1$-formula $\psi$ such that $T+\psi\vdash\varphi$. Then we define
    \begin{equation*}
        T\subseteq^{\Pi^\bfalpha_1}_{\Sigma^\bfalpha_1} U\iff \text{For all $\varphi\in\Sigma^\bfalpha_1$, if $T\vdash^{\Pi^\bfalpha_1}\varphi$, then $U\vdash^{\Pi^\bfalpha_1}\varphi$}.
    \end{equation*}

    Let's introduce some notation. The reflection principle $\RFN_{\Sigma^\bfalpha_1}(T)$ is the formula:
    \begin{equation*}
        \forall \varphi\in \Delta_0 \Big(\Pr_T\big(\exists x \in L_\bfalpha \; \varphi(x)\big)\to L_\bfalpha\models_{\Sigma_1} \exists x \varphi(x)\Big).
    \end{equation*}
    We define $T\le^{\Pi^\bfalpha_1}_{\RFN_{\Sigma^\bfalpha_1}}U$ if and only if
    \begin{equation*}
        \KP_1 + \Exists(\bfalpha) \vdash^{\Pi^\bfalpha_1} \RFN_{\Sigma^\bfalpha_1}(U)\to \RFN_{\Sigma^\bfalpha_1}(T).
    \end{equation*}
\end{definition}
Here is how we can describe $T\vdash^{\Pi^\bfalpha_1}\varphi$, for $\varphi\in\Sigma^\bfalpha_1$, from an $\bfalpha$-recursion theoretic perspective. For $\varphi\in\Sigma^\bfalpha_1$, $T\vdash^{\Pi^\bfalpha_1}\varphi$ just in case there exists an $\alpha$-program $M$ such that $T\vdash^{\Pi^\bfalpha_1}$ ``$M$ halts.'' Accordingly, we may view $\RFN_{\Sigma^\bfalpha_1}(T)$ as the following claim: Every $T$-provably halting $\bfalpha$-recursive programs actually halts over $L$.

Let us start by stating a small lemma that is extracted from the proof of \autoref{Theorem: FirstEquivalence}. This small lemma will play a pivotal role in the proofs of the main results of this section.
\begin{lemma}[$\KP_1 +\Exists(\bfalpha)$] \pushQED{\qed} \label{Lemma: SmallerOrdinal-Tprovablyexists}
    Let $\gamma$ be an ordinal with representation $\bfgamma$ such that $\gamma<|T|_{\Sigma^\bfalpha_1}$. Then $T\vdash^{\Pi^\bfalpha_1}\Exists(\bfgamma)^{L_\bfalpha}$.\qedhere
\end{lemma}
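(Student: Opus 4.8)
The plan is to reprise, for the single theory $T$, the oracle construction from the proof of \autoref{Theorem: FirstEquivalence}. The goal is to exhibit a true $\Pi^\bfalpha_1$-sentence $\psi$ with $T+\psi\vdash\Exists(\bfgamma)^{L_\bfalpha}$, and the witnessing $\psi$ will be a comparison sentence $\Comp(\bfgamma,\bfdelta)$ for a suitably chosen representable $\bfdelta$. First I would use \autoref{Lemma: equality-invariants} to rewrite the hypothesis as $\gamma<|T|_{\bfalpha\barrepr}$, and record that the $\Sigma_1$-sentence $\Exists(\bfgamma)$ has its witness captured below the supremum $|T|_{\bfalpha\barrepr}$ of the $T$-provably existing representable ordinals, i.e.\ $L_{|T|_{\Sigma^\bfalpha_1}}\models\Exists(\bfgamma)$.

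Next, using that $\Exists(\bfgamma)$ is $\Sigma_1$ together with the cofinality content of \autoref{Lemma: limit-case}, I would select a representable $\delta$, with representation $\bfdelta$, such that $\gamma<\delta<|T|_{\bfalpha\barrepr}$, $T\vdash\Exists(\bfdelta)^{L_\bfalpha}$, and, crucially, $L_\delta\models\Exists(\bfgamma)$ — that is, $\delta$ is taken large enough to already contain the $\Sigma_1$-witness for $\Exists(\bfgamma)$. With such a $\delta$ in hand I set
\[
\Comp(\bfgamma,\bfdelta)\equiv\forall\xi\big(R^\bfdelta(\xi)\to L_\xi\models\Exists(\bfgamma)\big),
\]
and take $\psi:=\Comp(\bfgamma,\bfdelta)^{L_\bfalpha}$ as the oracle. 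Two bookkeeping checks are needed: that $\psi$ is genuinely $\Pi^\bfalpha_1$, and that it is true. For the complexity, I would write $R^\bfdelta$ in its $\Sigma_1$ form and ``$L_\xi\models\Exists(\bfgamma)$'' in its $\Pi_1$ (indeed $\Delta_1$) form — using that $x=L_\xi$ is $\Delta_1$ — so that the implication under $\forall\xi$ collapses to a $\Pi_1$-formula, whence its relativization is $\Pi^\bfalpha_1$. Truth of $\psi$ amounts to truth of $\Comp(\bfgamma,\bfdelta)$ in $L_\alpha$: the unique $\xi$ satisfying $R^\bfdelta(\xi)$ is $\delta$, which lies below $\alpha$ by $\Sigma^\bfalpha_1$-soundness, and $L_\delta\models\Exists(\bfgamma)$ holds by the choice of $\delta$.

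Finally I would carry out the derivation inside $T+\psi$. From $T$ we have $\Exists(\bfdelta)^{L_\bfalpha}$, so there is $\xi_0\in L_\bfalpha$ with $(R^\bfdelta(\xi_0))^{L_\bfalpha}$; applying the oracle $\psi$ to $\xi_0$ yields $(L_{\xi_0}\models\Exists(\bfgamma))^{L_\bfalpha}$, which produces a witness $\eta_0\in L_{\xi_0}$ with $L_{\xi_0}\models R^\bfgamma(\eta_0)$. Since $R^\bfgamma$ is $\Sigma_1$, hence upward absolute, this witness survives into $L_\bfalpha$, giving $\Exists(\bfgamma)^{L_\bfalpha}$. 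As $\psi$ is a true $\Pi^\bfalpha_1$-sentence, this is exactly $T\vdash^{\Pi^\bfalpha_1}\Exists(\bfgamma)^{L_\bfalpha}$.

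I expect the main obstacle to be the selection of $\delta$ in the second step: one must guarantee not merely that $\delta$ exceeds the ordinal $\gamma$ and is $T$-provably existing, but that $L_\delta$ already contains the $\Sigma_1$-witness for $\Exists(\bfgamma)$, so that $\Comp(\bfgamma,\bfdelta)$ is \emph{true}. This is where the $\Sigma_1$-character of $\Exists(\bfgamma)$ and the fact that its witness is captured below $|T|_{\Sigma^\bfalpha_1}$ do the real work; the remaining steps are the absoluteness-and-complexity bookkeeping already implicit in \autoref{Theorem: FirstEquivalence}.
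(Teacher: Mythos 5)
Your construction is the paper's own: the lemma carries no separate proof in the paper (it is stated as ``extracted from the proof of \autoref{Theorem: FirstEquivalence}''), and the relevant fragment of that proof is exactly your recipe --- choose a $T$-provably existing representable $\bfdelta$ above $\bfgamma$ with $L_\bfdelta\models\Exists(\bfgamma)$, take the relativized $\Comp(\bfgamma,\bfdelta)$ as the true $\Pi^\bfalpha_1$ oracle, and derive $\Exists(\bfgamma)^{L_\bfalpha}$ inside $T+\Comp(\bfgamma,\bfdelta)$ by upward absoluteness. The complexity and truth bookkeeping you describe also matches.

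The one step you assert rather than prove is the one you yourself flag as doing the real work: that $\gamma<|T|_{\bfalpha\barrepr}$ already yields $L_{|T|_{\Sigma^\bfalpha_1}}\models\Exists(\bfgamma)$. For an arbitrary representation this does not follow. A representation of $\gamma$ is any $\Sigma_1$ formula with $\KP_1$-provable uniqueness that defines $\gamma$ over $L$, and nothing ties the level at which its $\Sigma_1$ witness first appears to $\gamma$ itself: for instance $R^\bfgamma(x):\equiv(x=\varnothing)\land\sigma$, with $\sigma$ a $\Sigma_1$ sentence true in $L$ but with no witness below $\alpha$, represents $0<|T|_{\Sigma^\bfalpha_1}$, yet $\Exists(\bfgamma)^{L_\bfalpha}$ is then false and hence not $\Pi^\bfalpha_1$-provable from any $\Sigma^\bfalpha_1$-sound $T$ (a true $\Pi^\bfalpha_1$ oracle plus a false $\Sigma^\bfalpha_1$ conclusion would normalize to a false $\Sigma^\bfalpha_1$ theorem of $T$). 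So the bridge from the bare hypothesis $\gamma<|T|_{\Sigma^\bfalpha_1}$ to $L_{|T|_{\Sigma^\bfalpha_1}}\models\Exists(\bfgamma)$ cannot be made in general; it is available exactly when one additionally knows $T\vdash\Exists(\bfgamma)^{L_\bfalpha}$ (or that $\Exists(\bfgamma)^{L_\bfalpha}$ is a $\Sigma^\bfalpha_1$ consequence of a theory of ordinal at most $|T|_{\Sigma^\bfalpha_1}$), in which case the definition of $|T|_{\Sigma^\bfalpha_1}$ hands you $L_{|T|_{\Sigma^\bfalpha_1}}\models\Exists(\bfgamma)$ directly. This is the form in which the lemma is invoked at every occurrence in the paper, and the gap is inherited from the lemma's statement rather than introduced by you; but a complete write-up should either add that hypothesis or note explicitly that the witness for $R^\bfgamma$ is being assumed to appear below $|T|_{\Sigma^\bfalpha_1}$. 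Once that is in place, your selection of $\bfdelta$ via \autoref{Lemma: limit-case} and the rest of the argument are correct.
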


It is known that adding a true $\Sigma^1_1$-sentence to a theory does not alter its $\Pi^1_1$-reflection. That is, if $\theta$ is a true $\Sigma^1_1$ sentence and $T$ is a $\Pi^1_1$-sound theory, then $\RFN_{\Pi^1_1}(T)$ implies $\RFN_{\Pi^1_1}(T+\sigma)$.
The following result is a set-theoretic analogue of what we previously stated:
\begin{lemma} \label{Lemma: True Pi 1 alpha does not affect reflection}
    Let $\theta$ be a $\Pi_1$-sentence such that $\theta^{L_\alpha}$ is true, and $T$ be a $\Sigma^\bfalpha_1$-sound extension of $\KP_1 + \Exists(\bfalpha)$.
    Then we have the following:
    \begin{equation*}
        \KP_1 + \Exists(\bfalpha) \vdash \RFN_{\Sigma^\bfalpha_1}(T) \to \RFN_{\Sigma^\bfalpha_1}(T+\theta).
    \end{equation*}
\end{lemma}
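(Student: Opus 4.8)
The plan is to mimic the classical fact recalled just above---that adding a true $\Sigma^1_1$-sentence leaves $\Pi^1_1$-reflection intact---exploiting the duality that puts the negation of the added sentence into the reflected class. Here the operative added axiom is the true $\Pi^\bfalpha_1$-sentence $\theta^{L_\bfalpha}$, and the reflected class is $\Sigma^\bfalpha_1$; since $\lnot\theta^{L_\bfalpha}$ is itself $\Sigma^\bfalpha_1$, it can be absorbed into the $\Sigma^\bfalpha_1$-sentence under reflection. Concretely, I would argue inside $\KP_1+\Exists(\bfalpha)$: assume $\RFN_{\Sigma^\bfalpha_1}(T)$, fix a code of a $\Delta_0$-formula $\varphi$, and suppose $\Pr_{T+\theta}\big(\exists x\in L_\bfalpha\,\varphi(x)\big)$, aiming to derive $L_\bfalpha\models_{\Sigma_1}\exists x\,\varphi(x)$.

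The first step is the absorption. Writing $\theta\equiv\forall y\,\psi(y)$ with $\psi\in\Delta_0$, the relativization $\theta^{L_\bfalpha}$ is $\forall y\in L_\bfalpha\,\psi(y)$, and the formalized deduction theorem turns the hypothesis into $\Pr_T\big(\theta^{L_\bfalpha}\to\exists x\in L_\bfalpha\,\varphi(x)\big)$. Put $\varphi'(w):=\lnot\psi(w)\lor\varphi(w)$, which is again $\Delta_0$. The crucial observation is the $\KP_1+\Exists(\bfalpha)$-provable equivalence $\big(\theta^{L_\bfalpha}\to\exists x\in L_\bfalpha\,\varphi(x)\big)\leftrightarrow\exists x\in L_\bfalpha\,\varphi'(x)$, which holds because $\lnot\theta^{L_\bfalpha}$ is the bounded formula $\exists y\in L_\bfalpha\,\lnot\psi(y)$ and two existentials bounded to $L_\bfalpha$ collapse to one. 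As provability respects $\KP_1$-provable equivalence, I get $\Pr_T\big(\exists x\in L_\bfalpha\,\varphi'(x)\big)$; this is now a genuine $\Sigma^\bfalpha_1$-sentence, so $\RFN_{\Sigma^\bfalpha_1}(T)$ delivers $L_\bfalpha\models_{\Sigma_1}\exists x\,\varphi'(x)$.

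The second step strips the spurious disjunct. Unwinding $\models_{\Sigma_1}$ via \autoref{Definition: Sigma n truth}, there is $x\in L_\bfalpha$ with $L_\bfalpha\models_{\Sigma_0}\lnot\psi(x)\lor\varphi(x)$. This is precisely where the hypothesis that $\theta^{L_\bfalpha}$ is true is used: since $L_\bfalpha\models\forall y\,\psi(y)$, the disjunct $\lnot\psi(x)$ cannot hold in $L_\bfalpha$, so $L_\bfalpha\models_{\Sigma_0}\varphi(x)$ and therefore $L_\bfalpha\models_{\Sigma_1}\exists x\,\varphi(x)$, which is the conclusion.

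The step I expect to be most delicate is the complexity bookkeeping underlying the absorption. One must verify, via the normalization results of \autoref{Lemma: Formula-normalization}, that it is exactly the relativization of $\theta$ to $L_\bfalpha$ that keeps $\lnot\theta^{L_\bfalpha}$ bounded---hence $\Sigma^\bfalpha_1$---and that the merged $\exists x\in L_\bfalpha\,\varphi'(x)$ is a bona fide $\Sigma^\bfalpha_1$-sentence to which the reflection principle literally applies; this is the set-theoretic shadow of the fact that the negation of a $\Sigma^1_1$-sentence is $\Pi^1_1$ that powers the arithmetical argument. The remaining care is bureaucratic: the final truth manipulation must be carried out through the partial predicate $\models_{\Sigma_1}$ rather than real satisfaction, and the truth of the $\Pi^\bfalpha_1$-sentence $\theta^{L_\bfalpha}$ is genuinely indispensable at that last step, mirroring the role played by the truth of the added $\Sigma^1_1$-sentence in the classical statement.
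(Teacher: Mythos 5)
Your proposal is correct and follows essentially the same route as the paper's proof: apply the deduction theorem to get $T\vdash(\theta\to\sigma)^{L_\bfalpha}$, note that $\theta\to\sigma$ is (normalizable to) $\Sigma_1$ because $\lnot\theta$ is $\Sigma_1$, apply $\RFN_{\Sigma^\bfalpha_1}(T)$, and finish using the truth of $\theta^{L_\bfalpha}$. The only difference is presentational: you make the normalization explicit by merging the two bounded existentials into the single $\Delta_0$ matrix $\varphi'$, where the paper simply cites that $\theta\to\sigma$ is $\Sigma_1$.
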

\begin{proof}
    Let us reason over $\KP_1 + \Exists(\bfalpha)$ and suppose that $\RFN_{\Sigma^\bfalpha_1}(T)$ holds. Now suppose that for a $\Sigma_1$-sentence $\sigma$, we have
    \begin{equation*}
        T + \theta^{L_\bfalpha} \vdash \sigma^{L_\bfalpha}. 
    \end{equation*}
    Then $T\vdash (\theta\to\sigma)^{L_\bfalpha}$, and $\theta\to\sigma$ is $\Sigma_1$. Hence by $\RFN_{\Sigma^\bfalpha_1}(T)$, we get $L_\alpha\vDash \theta\to \sigma$. Since $\theta^{L_\alpha}$ holds, we have $L_\alpha\vDash \sigma$.
\end{proof}

\begin{theorem}
    Let $T$ and $U$ be $\Pi^\bfalpha_1$-definable $\Sigma^\bfalpha_1$-sound extensions of $\KP_1 +\mathsf{Exists}(\boldsymbol{\alpha})$, where $\bfalpha$ is a $\KP_1$-provably admissible representation. Then we have
    \begin{equation*}
        T\subseteq^{\Pi^\bfalpha_1}_{\Sigma^\bfalpha_1} U \iff |T|_{\Sigma^\bfalpha_1}\le |U|_{\Sigma^\bfalpha_1}.
    \end{equation*}
\end{theorem}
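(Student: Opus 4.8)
The plan is to reduce the theorem to a single clean correspondence between oracle-provability and the invariant $|{\cdot}|_{\Sigma^\bfalpha_1}$ (abbreviated $|{\cdot}|$ below), and then to transport arbitrary $\Sigma^\bfalpha_1$-sentences into the normal form provided by \autoref{Lemma: alpha-Kleene}. The engine will be the characterization that I would isolate as a lemma: \emph{for a representation $\bfgamma$ of $\gamma$, we have $T\vdash^{\Pi^\bfalpha_1}\Exists(\bfgamma)^{L_\bfalpha}$ if and only if $\gamma<|T|_{\Sigma^\bfalpha_1}$}. The backward direction is precisely \autoref{Lemma: SmallerOrdinal-Tprovablyexists}, so all the work lies in the forward direction.

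For the forward direction I would first establish the auxiliary fact that \emph{adding a true $\Pi^\bfalpha_1$-sentence leaves the invariant unchanged}: if $\psi=\theta^{L_\bfalpha}$ is true with $\theta\in\Pi_1$ and $T$ is $\Sigma^\bfalpha_1$-sound, then $T+\psi$ is again $\Sigma^\bfalpha_1$-sound and $|T+\psi|=|T|$. The inequality $|T|\le|T+\psi|$ is monotonicity. For $|T+\psi|\le|T|$, suppose $T+\psi\vdash\sigma^{L_\bfalpha}$ with $\sigma\in\Sigma_1$; then $T\vdash(\theta\to\sigma)^{L_\bfalpha}$, and $\theta\to\sigma$ is logically equivalent to a genuine $\Sigma_1$-formula, so the definition of $|T|$ gives $L_{|T|}\models\theta\to\sigma$. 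Since $\Sigma^\bfalpha_1$-soundness forces $|T|\le\alpha$ and $\theta$ is $\Pi_1$ hence downward absolute, $L_\alpha\models\theta$ yields $L_{|T|}\models\theta$, whence $L_{|T|}\models\sigma$. This is the same maneuver used in \autoref{Lemma: True Pi 1 alpha does not affect reflection}. With this in hand the forward direction of the characterization follows: if $T\vdash^{\Pi^\bfalpha_1}\Exists(\bfgamma)^{L_\bfalpha}$ then $T+\psi\vdash\Exists(\bfgamma)^{L_\bfalpha}$ for some true $\Pi^\bfalpha_1$-sentence $\psi$; since $T+\psi$ is $\Sigma^\bfalpha_1$-sound and $\Exists(\bfgamma)^{L_\bfalpha}$ is $\Sigma^\bfalpha_1$, the ordinal $\gamma$ is representable and lies in the set defining $|T+\psi|_{\bfalpha\barrepr}$, so by \autoref{Lemma: limit-case} (which shows this set has no maximum) together with \autoref{Lemma: equality-invariants} we obtain $\gamma<|T+\psi|_{\bfalpha\barrepr}=|T|$.

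With the characterization proved I would dispatch the two directions. For $(\Leftarrow)$, assume $|T|\le|U|$ and take $\varphi=\exists x\in L_\bfalpha\,A(x)\in\Sigma^\bfalpha_1$ with $T\vdash^{\Pi^\bfalpha_1}\varphi$. Using \autoref{Lemma: alpha-Kleene} choose $\bfgamma$ with $\KP_1+\Exists(\bfalpha)\vdash\varphi\leftrightarrow\Exists(\bfgamma)^{L_\bfalpha}$; since $\vdash^{\Pi^\bfalpha_1}$ is closed under $\KP_1+\Exists(\bfalpha)$-provable implication, $T\vdash^{\Pi^\bfalpha_1}\Exists(\bfgamma)^{L_\bfalpha}$, so the characterization gives $\gamma<|T|\le|U|$. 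Applying the characterization to $U$ returns $U\vdash^{\Pi^\bfalpha_1}\Exists(\bfgamma)^{L_\bfalpha}$, and the implication $\Exists(\bfgamma)^{L_\bfalpha}\to\varphi$ then yields $U\vdash^{\Pi^\bfalpha_1}\varphi$. For $(\Rightarrow)$ I argue contrapositively: if $|U|<|T|$, then by \autoref{Lemma: equality-invariants} and \autoref{Lemma: limit-case} there is a representable $\gamma$ with $|U|<\gamma<|T|$ and $T\vdash\Exists(\bfgamma)^{L_\bfalpha}$; thus $\varphi:=\Exists(\bfgamma)^{L_\bfalpha}$ is a $\Sigma^\bfalpha_1$-sentence with $T\vdash^{\Pi^\bfalpha_1}\varphi$, whereas $U\vdash^{\Pi^\bfalpha_1}\varphi$ would force $\gamma<|U|$ by the characterization, a contradiction, so $T\not\subseteq^{\Pi^\bfalpha_1}_{\Sigma^\bfalpha_1}U$.

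I expect the main obstacle to be the forward direction of the characterization, namely controlling the effect of the oracle: one must verify that passing from $T$ to $T+\psi$ for a true $\Pi^\bfalpha_1$-sentence $\psi$ neither breaks $\Sigma^\bfalpha_1$-soundness nor inflates the invariant, and this is exactly where the downward absoluteness of $\Pi_1$-formulas together with the bound $|T|\le\alpha$ is essential. The remaining bookkeeping—the reduction of arbitrary $\Sigma^\bfalpha_1$-sentences to the normal form $\Exists(\bfgamma)^{L_\bfalpha}$ via \autoref{Lemma: alpha-Kleene}, and the closure of $\vdash^{\Pi^\bfalpha_1}$ under provable implications—is routine.
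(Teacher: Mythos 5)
Your proposal is correct and is essentially the paper's own argument, merely repackaged: the paper inlines what you isolate as the characterization lemma (its left-to-right direction uses \autoref{Lemma: SmallerOrdinal-Tprovablyexists} plus the observation that $\psi\to\Exists(\bfgamma)$ is $\Sigma_1$ together with downward absoluteness of the true $\Pi_1$ oracle sentence, which is exactly your proof that $|T+\psi|=|T|$; its right-to-left direction applies \autoref{Lemma: alpha-Kleene} to $\psi\to\varphi$ rather than to $\varphi$ alone). The only cosmetic differences are that you argue the forward implication by contraposition and factor the oracle-absorption step through an explicit invariance lemma, neither of which changes the mathematical content.
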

\begin{proof}
    For the left-to-right, suppose that $T\subseteq^{\Pi^\bfalpha_1}_{\Sigma^\bfalpha_1} U$. 
    Let $\gamma$ be a representable ordinal with a representation $\bfgamma$ such that $\gamma<|T|_{\Sigma^\bfalpha_1}$.
    Then by \autoref{Lemma: SmallerOrdinal-Tprovablyexists}, we have $T\vdash^{\Pi^\bfalpha_1}\Exists(\bfgamma)^{L_\bfalpha}$.
    Since $T\subseteq^{\Pi^\bfalpha_1}_{\Sigma^\bfalpha_1} U$ and $\Exists(\bfgamma)^{L_\bfalpha}$ is $\Sigma^\bfalpha_1$, we have $U\vdash^{\Pi^\bfalpha_1}\Exists(\bfgamma)^{L_\bfalpha}$.

    Hence we have a $\Pi_1$-sentence $\psi$ such that $L_\alpha\vDash \psi$ and $U\vdash \psi^{L_\bfalpha}\to \Exists(\bfgamma)^{L_\bfalpha}$.
    Since the statement $\psi\to \Exists(\bfgamma)$ is $\Sigma_1$, we have $L_{|U|_{\Sigma^\bfalpha_1}}\models \psi\to \Exists(\bfgamma)$.
    However, since $\psi$ is $\Pi_1$, $L_\alpha\models \psi$ implies $L_{|U|_{\Sigma^\bfalpha_1}}\models \Exists(\bfgamma)$. This shows $\gamma<|U|_{\Sigma^\bfalpha_1}$.

    For the right-to-left direction, assume that $|T|_{\Sigma^\bfalpha_1}\le |U|_{\Sigma^\bfalpha_1}$ and assume that $T\vdash^{\Pi^\bfalpha_1}\varphi^{L_\bfalpha}$ for a $\Sigma_1$-formula $\varphi$. 
    Thus we can find a $\Pi_1$-formula $\psi$ such that $L_\alpha\models \psi$ and $T\vdash(\psi\to\varphi)^{L_\bfalpha}$.
    We can see that $\psi\to \varphi$ is $\Sigma_1$, so by \autoref{Lemma: alpha-Kleene}, we can find a representation $\bfgamma$ such that
    \begin{equation*}
        \KP_1 \vdash \forall\beta\Big(\text{$\beta$ is admissible}\to [(\psi\to\varphi)^{L_\beta} \leftrightarrow \Exists(\bfgamma)^{L_\beta}]\Big).
    \end{equation*}
    Since $\bfalpha$ is $\KP_1$-provably admissible, we have
    \begin{equation*}
        \KP_1 + \Exists(\bfalpha) \vdash (\psi\to\varphi)^{L_\bfalpha} \leftrightarrow \Exists(\bfgamma)^{L_\bfalpha}.
    \end{equation*}
    Since $T$ is an extension of $\KP_1 + \Exists(\bfalpha)$, we have $T\vdash \Exists(\bfgamma)^{L_\bfalpha}$.
    From this, we get $\bfgamma<|T|_{\Sigma^\bfalpha_1}\le |U|_{\Sigma^\bfalpha_1}$ by \autoref{Lemma: equality-invariants}.
    Thus by \autoref{Lemma: SmallerOrdinal-Tprovablyexists}, $U\vdash^{\Pi^\bfalpha_1} \Exists(\bfgamma)^{L_\bfalpha}$, which implies $U\vdash^{\Pi^\bfalpha_1}(\psi\to\phi)^{L_\bfalpha}$.
    Since $L_\bfalpha\models \psi$, we finally have $U\vdash^{\Pi^\bfalpha_1}\phi^{L_\bfalpha}$.
\end{proof}

Now let us turn our view to the remaining equivalence. Before continuing, let us formulate additional reflection principles:
\begin{definition}
    $\RFN_{\bfalpha\barrepr}(T)$ is the following principle: For every representation $\bfgamma$, if $T\vdash\Exists(\bfgamma)^{L_\bfalpha}$, then $L_\bfalpha\models \Exists(\bfgamma)$. 
\end{definition}
\begin{definition}
    The principles $\RFN^{\Pi^\bfalpha_1}_{\Sigma^\bfalpha_1}(T)$ and $\RFN^{\Pi^\bfalpha_1}_{\bfalpha\barrepr}(T)$ are obtained from replacing the usual $T$-provability ($T\vdash$) with $\Pi^\bfalpha_1$-provability ($T\vdash^{\Pi^\bfalpha_1}$) in the corresponding reflection principles.
\end{definition}

\begin{definition}
    Let $T$ and $U$ be $\Sigma^\bfalpha_1$-sound extensions of $\KP_1+\Exists(\bfalpha)$. We say that $T\le_{\RFN_{\Sigma^\bfalpha_1}}^{\Pi^\bfalpha_1} U$ if the following holds:
    \begin{equation*}
        \KP_1 + \Exists(\bfalpha) \vdash^{\Pi^\bfalpha_1} \RFN_{\Sigma^\bfalpha_1}(U)\to\RFN_{\Sigma^\bfalpha_1}(T).
    \end{equation*}
\end{definition}

\begin{lemma}\label{Lemma: Equivalence-RFNs}
    Suppose that $T$ is a $\Sigma^\bfalpha_1$-sound $\Delta^\bfalpha_1$-definable extension of $\KP_1+\Exists(\bfalpha)$ for a $\KP_1$-provably admissible $\bfalpha$.
    Then $\KP_1 + \Exists(\bfalpha)$ proves all of the following are all equivalent:
    \begin{enumerate}
        \item $\RFN_{\Sigma^\bfalpha_1}(T)$,
        \item $\RFN^{\Pi^\bfalpha_1}_{\Sigma^\bfalpha_1}(T)$,
        \item $\RFN_{\bfalpha\barrepr}(T)$,
        \item $\RFN^{\Pi^\bfalpha_1}_{\bfalpha\barrepr}(T)$,
    \end{enumerate}
\end{lemma}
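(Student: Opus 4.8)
The plan is to prove the four principles equivalent by establishing the cycle of implications $(3)\Rightarrow(1)\Rightarrow(2)\Rightarrow(4)\Rightarrow(3)$, with every step carried out inside $\KP_1+\Exists(\bfalpha)$. Two of these links are essentially bookkeeping: $(4)\Rightarrow(3)$ is immediate because ordinary provability $\vdash$ refines oracle provability $\vdash^{\Pi^\bfalpha_1}$, so the antecedent of $(3)$ implies the antecedent of $(4)$ while the consequents agree; and $(2)\Rightarrow(4)$ holds because $\Exists(\bfgamma)=\exists y\,R^\bfgamma(y)$ is $\KP_1$-provably equivalent to a $\Sigma_1$-sentence $\exists x\,\varphi(x)$ with $\varphi\in\Delta_0$ (by \autoref{Lemma: Formula-normalization}), so the instance of $(2)$ for this $\varphi$ is exactly the instance of $(4)$ for $\bfgamma$. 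Throughout I will use that $\Delta^\bfalpha_1$-definability of $T$ guarantees $\Pr_T$, and hence $\Pr_T^{\Pi^\bfalpha_1}$, is $\Sigma^\bfalpha_1$, which both makes the reflection principles well-formed and supplies the derivability conditions for $\Pr_T$ that the internal reasoning below tacitly invokes.

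The first substantive link is $(3)\Rightarrow(1)$, which is where the $\bfalpha$-Kleene normal form does its work. Assume $\RFN_{\bfalpha\barrepr}(T)$ and fix a $\Delta_0$-formula $\varphi$ with $\Pr_T(\exists x\in L_\bfalpha\,\varphi(x))$. Applying \autoref{Lemma: alpha-Kleene} to the $\Sigma_1$-sentence $\exists x\,\varphi(x)$ yields a representation $\bfgamma$ (depending only, and uniformly, on the code of $\varphi$) with $\KP_1+\Exists(\bfalpha)\vdash (\exists x\,\varphi(x))^{L_\bfalpha}\leftrightarrow \Exists(\bfgamma)^{L_\bfalpha}$. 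Since $T$ extends $\KP_1+\Exists(\bfalpha)$, closure of $\Pr_T$ under provable implication gives $T\vdash\Exists(\bfgamma)^{L_\bfalpha}$, so $\RFN_{\bfalpha\barrepr}(T)$ delivers $L_\bfalpha\models\Exists(\bfgamma)$; transporting back across the provable equivalence yields $L_\bfalpha\models_{\Sigma_1}\exists x\,\varphi(x)$, which is the required instance of $(1)$. Because the passage from $\varphi$ to $\bfgamma$ is uniform, this argument is carried out under the internal universal quantifier over $\Delta_0$-codes.

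The key step is $(1)\Rightarrow(2)$, the absorption of the $\Pi^\bfalpha_1$-oracle; this is the internalized form of \autoref{Lemma: True Pi 1 alpha does not affect reflection}. Assume $\RFN_{\Sigma^\bfalpha_1}(T)$ and fix $\varphi\in\Delta_0$ with $\Pr_T^{\Pi^\bfalpha_1}(\exists x\in L_\bfalpha\,\varphi(x))$. Unwinding the definition, there is a $\Pi_1$-formula $\psi$ with $L_\bfalpha\models\psi$ and $T\vdash(\psi\to\exists x\,\varphi(x))^{L_\bfalpha}$. The point is that $\psi\to\exists x\,\varphi(x)$ is again $\Sigma_1$ (a $\Sigma_1$-disjunct $\lnot\psi$ together with a $\Sigma_1$-disjunct), so $\RFN_{\Sigma^\bfalpha_1}(T)$ applies and gives $L_\bfalpha\models_{\Sigma_1}\psi\to\exists x\,\varphi(x)$; combined with $L_\bfalpha\models\psi$ this yields $L_\bfalpha\models_{\Sigma_1}\exists x\,\varphi(x)$, the desired instance of $(2)$. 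The only delicate feature is that the oracle $\psi$ is existentially quantified inside $\Pr_T^{\Pi^\bfalpha_1}$; one handles this by using the partial truth predicate $\vDash_{\Pi_1}$ to witness ``$\psi$ is a true $\Pi^\bfalpha_1$-sentence'' internally, after which pulling the witness out and forming the $\Sigma_1$ implication goes through verbatim.

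The main obstacle is precisely this oracle-absorption step, both conceptually and in its formalization. Conceptually it is what separates the present set-theoretic result from ordinary reflection: the fact that a $\Pi_1$ hypothesis can be discharged into a $\Sigma_1$ conclusion without raising complexity is exactly what lets $\RFN_{\Sigma^\bfalpha_1}$ swallow its own oracle. The formalization obstacles are keeping all truth evaluations inside $L_\bfalpha$ expressed through the partial predicates $\vDash_{\Sigma_1}$ and $\vDash_{\Pi_1}$, and ensuring that $\Pr_T^{\Pi^\bfalpha_1}$ has low enough complexity for the internal manipulations, which is where $\Delta^\bfalpha_1$-definability of $T$ is genuinely used. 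Once these are in place, the remaining links $(2)\Rightarrow(4)$ and $(4)\Rightarrow(3)$ close the cycle and establish the equivalence of all four principles.
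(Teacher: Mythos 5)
Your proof is correct and uses the same essential ingredients as the paper's: \autoref{Lemma: alpha-Kleene} to pass between arbitrary $\Sigma_1$ sentences and $\Exists(\bfgamma)$, and the observation that a $\Pi_1$ hypothesis can be absorbed into a $\Sigma_1$ conclusion because $\psi\to\sigma$ normalizes to $\Sigma_1$. The only difference is organizational: the paper proves the three pairwise equivalences $(1)\leftrightarrow(2)$, $(3)\leftrightarrow(4)$, $(1)\leftrightarrow(3)$, and in particular reruns the oracle-absorption argument at the representation level for $(3)\to(4)$ via a second appeal to \autoref{Lemma: ordinal-Kleene}, whereas your cycle $(3)\Rightarrow(1)\Rightarrow(2)\Rightarrow(4)\Rightarrow(3)$ performs oracle absorption once (at the $\Sigma_1$ level) and then obtains $(4)$ as a specialization, which is a slight streamlining but not a different method.
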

\begin{proof}
    In this proof, we reason in $\KP_1 + \Exists(\bfalpha)$ \emph{with no other true $\Pi^\bfalpha_1$ sentences} unless specified.
    
    (1) $\leftrightarrow$ (2). $\RFN^{\Pi^\bfalpha_1}_{\Sigma^\bfalpha_1}(T)\to \RFN_{\Sigma^\bfalpha_1}(T)$ follows from the trivial fact that $T\vdash\phi$ implies $T\vdash^{\Pi^\bfalpha_1}\phi$.
    
    For the converse, suppose that $T\vdash^{\Pi^\bfalpha_1}\varphi^{L_\bfalpha}$ for some $\Sigma_1$-statement $\varphi$. Then we have a $\Pi_1$-sentence $\psi$ such that $L_\alpha\models \psi$ and $T\vdash\psi^{L_\bfalpha}\to\varphi^{L_\bfalpha}$.
    Hence by $\RFN_{\Sigma^\bfalpha_1}(T)$ and from the fact that $\psi^{L_\alpha}\to\varphi^{L_\alpha}$ is $\Sigma^\bfalpha_1$, we have $L_\alpha\models \psi\to\varphi$. But we know that $L_\alpha\models\psi$, so $L_\alpha\models\varphi$.
    That is, $T\vdash^{\Pi^\bfalpha_1}\varphi^{L_\bfalpha}$ implies $L_\alpha\models \phi$, which is the very statement of $\RFN^{\Pi^\bfalpha_1}_{\Sigma^\bfalpha_1}(T)$.

    (3) $\leftrightarrow$ (4). $\RFN^{\Pi^\bfalpha_1}_{\bfalpha\barrepr}(T)\to \RFN_{\bfalpha\barrepr}(T)$ is clear as before.
    To show the converse, suppose that $T\vdash^{\Pi^\bfalpha_1}(\Exists(\bfgamma))^{L_\bfalpha}$ for a representation $\bfgamma$.
    Then we can find a $\Pi_1$-sentence $\psi$ such that $L_\alpha\models\psi$ and
    \begin{equation} \tag{$B$}\label{Formula: T proves psi to Exists gamma}
        T\vdash \psi^{L_\bfalpha}\to \Exists(\bfgamma)^{L_\bfalpha}.
    \end{equation}
    By \autoref{Lemma: ordinal-Kleene}, we can find a representation $\bfdelta$ such that
    \begin{equation}\tag{$C$}\label{Formula: Lemma-Equivalence-RFNs}
        \KP_1 +(V=L) \vdash  \Exists(\bfdelta)\leftrightarrow [\psi\to \Exists(\bfgamma)].
    \end{equation}
    We know that $\bfalpha$ is a $\KP_1$-provably admissible representation. Which is to say that we know that:
    \begin{equation*}
        \KP_1 + \Exists(\bfalpha) \vdash[ L_\bfalpha\vDash \KP_1 + (V=L)].
    \end{equation*}
    Hence by \eqref{Formula: Lemma-Equivalence-RFNs}, we have   
    \begin{equation}\tag{$D$}\label{Formula: Lemma-Equivalence-RFNs 2}
        \KP_1 + \Exists(\bfalpha) \vdash \Exists(\bfdelta)^{L_\bfalpha}\leftrightarrow [\psi^{L_\bfalpha}\to \Exists(\bfgamma)^{L_\bfalpha}].
    \end{equation}
    Since $T$ extends $\KP_1 + \Exists(\bfalpha)$, we have $T\vdash \Exists(\bfdelta)^{L_\alpha}$ from \eqref{Formula: T proves psi to Exists gamma} and \eqref{Formula: Lemma-Equivalence-RFNs 2}.
    From $\RFN_{\bfalpha\barrepr}$, we have $L_\alpha\models \Exists(\bfdelta)$, which is equivalent to $L_\alpha\models \psi\to \Exists(\bfgamma)$ by \eqref{Formula: Lemma-Equivalence-RFNs}.
    
    Since $L_\alpha\models \psi$, we have $L_\alpha\models \Exists(\bfgamma)$. In sum, $T\vdash^{\Pi^\bfalpha_1}(\Exists(\bfgamma))^{L_\bfalpha}$ implies $L_\alpha\models \Exists(\bfgamma)$.
    
    (1) $\leftrightarrow$ (3). $\RFN_{\Sigma^\bfalpha_1}(T)\to \RFN_{\bfalpha\barrepr}(T)$ follows from the fact that $\Exists(\bfgamma)$ is a $\Sigma_1$ sentence.
    
    Now let us show the converse.
    For a $\Sigma_1$-sentence $\varphi$, we can find a representation $\bfgamma$ such that $\varphi^{L_\bfalpha}\leftrightarrow \Exists(\bfgamma)^{L_\bfalpha}$, which follows from \autoref{Lemma: alpha-Kleene} and by repeating the previous arguments.
    
    Now suppose that $T\vdash\varphi^{L_\bfalpha}$, then $T\vdash \Exists(\bfgamma)^{L_\bfalpha}$. We assumed $\RFN_{\bfalpha\barrepr}(T)$, so we infer that $L_\bfalpha\models \Exists(\bfgamma)$. Since $\phi^{L_\bfalpha}\leftrightarrow \Exists(\bfgamma)^{L_\bfalpha}$, we get $L_\bfalpha\models\phi$.
\end{proof}

The next theorem shows that comparing the $\Sigma^\bfalpha_1$-reflection order modulo $\Pi^\bfalpha_1$-provability coincides with the ordering induced by comparing $\Sigma^\bfalpha_1$-proof-theoretic ordinals.
\begin{theorem}\label{well-ordering-thm}
    Let $T$ and $U$ be $\Delta^\bfalpha_1$-definable $\Sigma^\bfalpha_1$-sound extensions of $\KP_1 + \mathsf{Exists}(\bfalpha)$. Then we have
    \begin{equation}
        |T|_{\Sigma^\bfalpha_1}\le |U|_{\Sigma^\bfalpha_1} \iff T\le^{\Pi^\bfalpha_1}_{\RFN_{\Sigma^\bfalpha_1}}U.
    \end{equation}
\end{theorem}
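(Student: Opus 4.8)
The plan is to factor the desired equivalence through the preceding characterization $T\subseteq^{\Pi^\bfalpha_1}_{\Sigma^\bfalpha_1}U\iff |T|_{\Sigma^\bfalpha_1}\le|U|_{\Sigma^\bfalpha_1}$, so that it suffices to prove $T\le^{\Pi^\bfalpha_1}_{\RFN_{\Sigma^\bfalpha_1}}U\iff T\subseteq^{\Pi^\bfalpha_1}_{\Sigma^\bfalpha_1}U$. Throughout I would use \autoref{Lemma: Equivalence-RFNs} to replace $\RFN_{\Sigma^\bfalpha_1}$ by the representation-theoretic form $\RFN^{\Pi^\bfalpha_1}_{\bfalpha\barrepr}$, since the latter is stated directly in terms of the representations $\bfgamma$ with $T\vdash^{\Pi^\bfalpha_1}\Exists(\bfgamma)^{L_\bfalpha}$, which are exactly the objects that $|T|_{\Sigma^\bfalpha_1}=|T|_{\bfalpha\barrepr}$ measures (\autoref{Lemma: equality-invariants}, \autoref{Lemma: limit-case}, \autoref{Lemma: SmallerOrdinal-Tprovablyexists}). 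By the deduction theorem, $T\le^{\Pi^\bfalpha_1}_{\RFN_{\Sigma^\bfalpha_1}}U$ then becomes the assertion $\KP_1+\Exists(\bfalpha)+\RFN^{\Pi^\bfalpha_1}_{\bfalpha\barrepr}(U)\vdash^{\Pi^\bfalpha_1}\RFN^{\Pi^\bfalpha_1}_{\bfalpha\barrepr}(T)$.

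For the direction $T\le^{\Pi^\bfalpha_1}_{\RFN_{\Sigma^\bfalpha_1}}U\Rightarrow |T|\le|U|$, I would first isolate an easy half: for any $\Sigma^\bfalpha_1$-sound extension $W$ of the base, $W\vdash^{\Pi^\bfalpha_1}\RFN^{\Pi^\bfalpha_1}_{\bfalpha\barrepr}(T)$ implies $|T|_{\Sigma^\bfalpha_1}\le|W|_{\Sigma^\bfalpha_1}$, because for each representable $\gamma<|T|$ the fact $T\vdash^{\Pi^\bfalpha_1}\Exists(\bfgamma)^{L_\bfalpha}$ is a true $\Sigma_1$ statement, hence provable in $W$ by formalized $\Sigma_1$-completeness, so the instance of $\RFN^{\Pi^\bfalpha_1}_{\bfalpha\barrepr}(T)$ at $\bfgamma$ gives $W\vdash\Exists(\bfgamma)^{L_\bfalpha}$ and thus $\gamma<|W|_{\Sigma^\bfalpha_1}$. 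Taking $W=\KP_1+\Exists(\bfalpha)+\theta+\RFN^{\Pi^\bfalpha_1}_{\bfalpha\barrepr}(U)$, with $\theta$ the true $\Pi^\bfalpha_1$ oracle witnessing the hypothesis, yields $|T|\le|W|$. The remaining, substantive step is the conservation fact $|W|_{\Sigma^\bfalpha_1}=|U|_{\Sigma^\bfalpha_1}$: adjoining $\Sigma^\bfalpha_1$-reflection for $U$ must not increase the $\Sigma^\bfalpha_1$-ordinal. The reason to expect this is that the reflected pointclass matches the pointclass measuring the ordinal, so—unlike $\Sigma^0_1$-reflection gauged by a $\Pi^0_2$-ordinal—there is no genuine boost: every $\Sigma^\bfalpha_1$-consequence $\Exists(\bfgamma)^{L_\bfalpha}$ of $W$ should trace to finitely many reflection instances whose antecedents $\Pr_U(\Exists(\bfdelta_i)^{L_\bfalpha})$ are true, each contributing a genuinely $U$-provable $\Exists(\bfdelta_i)^{L_\bfalpha}$ with $\delta_i<|U|$ (instances with false antecedent are true $\Pi^\bfalpha_1$-sentences $\lnot\Pr_U(\cdots)$ to be folded into the oracle via \autoref{Lemma: True Pi 1 alpha does not affect reflection}). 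Combining these with \autoref{Lemma: SmallerOrdinal-Tprovablyexists} gives $U\vdash^{\Pi^\bfalpha_1}\Exists(\bfgamma)^{L_\bfalpha}$, whence $\gamma<|U|$.

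For the direction $|T|\le|U|\Rightarrow T\le^{\Pi^\bfalpha_1}_{\RFN_{\Sigma^\bfalpha_1}}U$, I would reason inside $\KP_1+\Exists(\bfalpha)$ with a true $\Pi^\bfalpha_1$ oracle, assume $\RFN^{\Pi^\bfalpha_1}_{\bfalpha\barrepr}(U)$, and derive $\RFN^{\Pi^\bfalpha_1}_{\bfalpha\barrepr}(T)$: given an internal representation $\bfgamma$ with $T\vdash^{\Pi^\bfalpha_1}\Exists(\bfgamma)^{L_\bfalpha}$, it suffices to produce a $\bfdelta$ with $U\vdash^{\Pi^\bfalpha_1}\Exists(\bfdelta)^{L_\bfalpha}$ and a true $\Comp(\bfgamma,\bfdelta)$, for then $\RFN^{\Pi^\bfalpha_1}_{\bfalpha\barrepr}(U)$ yields $L_\bfalpha\models\Exists(\bfdelta)$ and $\Comp(\bfgamma,\bfdelta)$ yields $L_\bfalpha\models\Exists(\bfgamma)$. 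This is exactly the reduction $(\star)$ from the proof of \autoref{Theorem: FirstEquivalence}, now run for $T$ and $U$ under the hypothesis $|T|\le|U|$ furnished by the preceding theorem; the uniformity of \autoref{Lemma: alpha-Kleene}—the representation depends only on the formula—is what allows the passage $\bfgamma\mapsto\bfdelta$ to be carried out for the internally quantified $\bfgamma$.

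The main obstacle is twofold and concerns precisely the traffic between the external facts about $|T|,|U|$ and internal provability. For the backward direction it is the conservation statement $|{\KP_1+\Exists(\bfalpha)+\RFN^{\Pi^\bfalpha_1}_{\bfalpha\barrepr}(U)}|_{\Sigma^\bfalpha_1}=|U|_{\Sigma^\bfalpha_1}$; because $\RFN^{\Pi^\bfalpha_1}_{\bfalpha\barrepr}(U)$ is a single sentence with an internal quantifier over formulas rather than a scheme, the extraction of finitely many true reflection instances is delicate and may instead demand building a (possibly ill-founded) model of $\KP_1+\Exists(\bfalpha)+\RFN^{\Pi^\bfalpha_1}_{\bfalpha\barrepr}(U)+\lnot\Exists(\bfgamma)^{L_\bfalpha}$ for each representable $\gamma\ge|U|$, exploiting $U\not\vdash^{\Pi^\bfalpha_1}\Exists(\bfgamma)^{L_\bfalpha}$. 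For the forward direction the parallel difficulty is compressing the comparison $|T|\le|U|$ into one true $\Pi^\bfalpha_1$ oracle so that the reduction $\bfgamma\mapsto\bfdelta$ and the relevant $\Comp(\bfgamma,\bfdelta)$ are obtained uniformly. I expect the conservation fact to be the crux, as it is the exact sense in which matched-pointclass reflection is proof-theoretically inert.
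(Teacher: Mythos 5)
Your forward direction ($|T|_{\Sigma^\bfalpha_1}\le|U|_{\Sigma^\bfalpha_1}\Rightarrow T\le^{\Pi^\bfalpha_1}_{\RFN_{\Sigma^\bfalpha_1}}U$) is in the right spirit, but the obstacle you yourself flag---compressing the external comparison into a single true $\Pi^\bfalpha_1$ oracle that works for an \emph{internally} quantified $\bfgamma$---is the entire content of that direction, and you leave it unresolved. The paper's solution is more direct than your $\Comp(\bfgamma,\bfdelta)$ route: it takes as oracle the single sentence $\theta:=\forall\bfgamma\,\big(\Pr_T(\Exists(\bfgamma)^{L_\bfalpha})\to\Pr^{\Pi^\bfalpha_1}_U(\Exists(\bfgamma)^{L_\bfalpha})\big)$, which is true by \autoref{Lemma: SmallerOrdinal-Tprovablyexists} and is $\Pi^\bfalpha_1$ precisely because $T$ and $U$ are $\Delta^\bfalpha_1$-definable; inside $\KP_1+\Exists(\bfalpha)+\theta$ one passes from $\RFN^{\Pi^\bfalpha_1}_{\bfalpha\barrepr}(U)$ to $\RFN_{\bfalpha\barrepr}(T)$ without ever changing the representation. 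Also, your ``easy half'' treats ``$T\vdash^{\Pi^\bfalpha_1}\Exists(\bfgamma)^{L_\bfalpha}$'' as a true $\Sigma_1$ statement amenable to formalized $\Sigma_1$-completeness; it is not $\Sigma_1$, since it asserts the truth of a $\Pi^\bfalpha_1$ sentence. That is repairable by moving the witnessing oracle sentence into $W$'s oracle, but it needs to be said.

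The backward direction is where the proposal genuinely fails: the ``conservation fact'' $|{\KP_1+\Exists(\bfalpha)+\RFN_{\Sigma^\bfalpha_1}(U)}|_{\Sigma^\bfalpha_1}=|U|_{\Sigma^\bfalpha_1}$ that you correctly identify as the crux is \emph{false}, and the heuristic that matched-pointclass reflection is ``proof-theoretically inert'' is exactly backwards. By \autoref{Lemma: KeyLemma-RFN-comparison-WF}, $\KP_1+\Exists(\bfalpha)+\RFN_{\Sigma^\bfalpha_1}(U)$ proves the $\Sigma^\bfalpha_1$ sentence $\exists\xi<\bfalpha\,\phi_U^{L_\bfalpha}(\xi)$ whose least witness in $L_\alpha$ is $|U|_{\Sigma^\bfalpha_1}$ itself; hence $|{\KP_1+\Exists(\bfalpha)+\RFN_{\Sigma^\bfalpha_1}(U)}|_{\Sigma^\bfalpha_1}>|U|_{\Sigma^\bfalpha_1}$, and adding a true $\Pi^\bfalpha_1$ oracle does not change this, so your chain $|T|\le|W|=|U|$ breaks at its last link. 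This strict boost is not an obstacle to be engineered around: it is the engine of \autoref{Theorem: RFN-comparison-WF-main}. The paper's argument for this direction avoids conservation entirely and runs by contradiction: assuming $T\le^{\Pi^\bfalpha_1}_{\RFN_{\Sigma^\bfalpha_1}}U$ but $|U|_{\Sigma^\bfalpha_1}<|T|_{\Sigma^\bfalpha_1}$, fix $\bfgamma$ with $T\vdash\Exists(\bfgamma)^{L_\bfalpha}$ and $|U|_{\Sigma^\bfalpha_1}<\gamma$, form the true $\Pi^\bfalpha_1$ sentence $F$ asserting that every representation $U$-provably existing in $L_\bfalpha$ has value below $\bfgamma$ (this is where $\Delta^\bfalpha_1$-definability of $U$ enters), observe $T+F\vdash\RFN_{\bfalpha\barrepr}(U)$, and chain this with the hypothesis and \autoref{Lemma: True Pi 1 alpha does not affect reflection} to produce a sound theory proving its own $\Sigma^\bfalpha_1$-reflection---contradicting the incompleteness phenomenon. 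Some self-referential argument of this kind must replace your conservation step.
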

\begin{proof}
    Let us prove the left-to-right implication first. It suffices to show that the following holds:
    \begin{equation*}
        \KP_1 + \Exists(\bfalpha) \vdash^{\Pi^\bfalpha_1} \RFN^{\Pi^\bfalpha_1}_{\bfalpha\barrepr}(U)\to \RFN_{\bfalpha\barrepr}(T).
    \end{equation*}
    Then the desired implication follows from \autoref{Lemma: Equivalence-RFNs}.

    First, let us reason over the metatheory. Suppose that $\bfgamma$ is a representation such that $T\vdash\Exists(\bfgamma)^{L_\bfalpha}$.
    If $\bfgamma$ is a representation of $\gamma$, then $\gamma<|T|_{\Sigma^\bfalpha_1}\le|U|_{\Sigma^\bfalpha_1}$.
    Hence by \autoref{Lemma: SmallerOrdinal-Tprovablyexists}, we have $U\vdash^{\Pi^\bfalpha_1}\Exists(\bfgamma)^{L_\bfalpha}$.
    In short, we have shown that the following sentence is \emph{true}:
    \begin{equation*}
        \theta :\equiv \forall \bfgamma \Big(\text{$\bfgamma$ is a representation}\to [\Pr_T(\Exists(\bfgamma)^{L_\bfalpha})\to \Pr_U^{\Pi^\bfalpha_1}(\Exists(\bfgamma)^{L_\bfalpha})]\Big).
    \end{equation*}

    Since both $T$ and $U$ are $\Delta^\bfalpha_1$-definable, $\theta$ is a true $\Pi^\bfalpha_1$ sentence.\footnote{In fact, $\theta$ is $\Delta^\bfalpha_1$, but we will not use this fact in our argument.}

    Now let us reason over $\KP_1 + \Exists(\bfalpha) + \theta$. If $\bfgamma$ is a representation such that $T\vdash \Exists(\bfgamma)^{L_\bfalpha}$, then we have $U\vdash^{\Pi^\bfalpha_1} \Exists(\bfgamma)^{L_\bfalpha}$.
    Thus if we have $\RFN^{\Pi^\bfalpha_1}_{\bfalpha\barrepr}(U)$, then we get $L_\alpha\models \Exists(\bfgamma)$.
    Hence we get $\RFN_{\bfalpha\barrepr}(T)$.

    To show the right-to-left implication, assume for the sake of contradiction that the implication fails. Thus we have $T\le^{\Pi^\bfalpha_1}_{\RFN_{\Sigma^\bfalpha_1}}U$ but $|U|_{\Sigma^\bfalpha_1}<|T|_{\Sigma^\bfalpha_1}$.
    Let us fix a representation $\bfgamma$ with a value $\gamma$ such that $T\vdash \Exists(\bfgamma)^{L_\bfalpha}$ and $|U|_{\Sigma^\bfalpha_1}<\gamma$. Now consider the statement $F$ roughly saying `Every representation such that $U$ proves its value exists has a value less than $\bfgamma$':
    \begin{equation*}
        F:= \text{For every representation $\bfdelta$, } [[U\vdash \Exists(\bfdelta)^{L_\bfalpha}]\to L_\bfgamma\models \Exists(\bfdelta)].
    \end{equation*}
    Note that its more formal statement is as follows:
    \begin{equation*}
        \forall \zeta<\bfalpha \forall \bfdelta [\text{$\bfdelta$ is a representation}\land R^\bfgamma(\zeta)\to [[U\vdash \Exists(\bfdelta)^{L_\bfalpha}]\to L_\zeta\models \Exists(\bfdelta)]].
    \end{equation*}
    Then $F$ is a true $\Pi^\bfalpha_1$-statement; indeed, the formula consists of universal quantifiers ranging over a Boolean combination of formulas each of which is at most $\Delta_1^{L_\bfalpha}$ (recall that the $U$-provability relation is $\Delta_1^{L_\bfalpha}$ by assumption).

Since $T$ proves $\bfgamma < \bfalpha$, we have $T+F\vdash\RFN_{\bfalpha\barrepr}(U)$.
By \autoref{Lemma: True Pi 1 alpha does not affect reflection}, we get
\begin{equation*}
    \KP_1 +  \Exists(\bfalpha) + F \vdash \RFN_{\Sigma^\bfalpha_1}(T) \to \RFN_{\Sigma^\bfalpha_1}(T+F).
 \end{equation*}

    Combining $T+F\vdash\RFN_{\bfalpha\barrepr}(U)$ with \autoref{Lemma: Equivalence-RFNs}, we have
    \begin{equation*}
        \KP_1 +  \Exists(\bfalpha) + F \vdash \RFN_{\Sigma^\bfalpha_1}(T) \to \RFN_{\Sigma^\bfalpha_1}(T+\RFN_{\Sigma^\bfalpha_1}(U)).
    \end{equation*}
    Since $T\le^{\Pi^\bfalpha_1}_{\RFN_{\Sigma^\bfalpha_1}}U$, we can find a true $\Pi^\bfalpha_1$-sentence $G$ such that
    \begin{equation*}
        \KP_1 +  \Exists(\bfalpha) + G \vdash \RFN_{\Sigma^\bfalpha_1}(U)\to \RFN_{\Sigma^\bfalpha_1}(T).
    \end{equation*}
    By combining these two, we get
    \begin{equation*}
        \KP_1 +  \Exists(\bfalpha) + F + G \vdash \RFN_{\Sigma^\bfalpha_1}(U)\to \RFN_{\Sigma^\bfalpha_1}(T+\RFN_{\Sigma^\bfalpha_1}(U)).
    \end{equation*}
    Since $T$ contains $\KP_1+\Exists(\bfalpha)$,  we get
    \begin{equation*}
        \KP_1 +  \Exists(\bfalpha) + F + G + \RFN_{\Sigma^\bfalpha_1}(U) \vdash \RFN_{\Sigma^\bfalpha_1}(\KP_1+ \Exists(\bfalpha)+\RFN_{\Sigma^\bfalpha_1}(U)).
    \end{equation*}
    Let $S$ be the theory on the left-hand side of the above formula. Since $F$ and $G$ are true $\Pi^\bfalpha_1$-formulas, we have $S\vdash \RFN_{\Sigma^\bfalpha_1}(S)$. 
    However, $S$ is axiomatized by sentences valid over $L$, which means $S$ is sound. A contradiction.
\end{proof}

Unlike the proof in \cite{walsh2023characterizations}, we do not put extra care on proving the left-to-right because $\KP_1$ already proves we can swap an order of a bounded quantifier and an unbounded $\exists$ as long as the resulting formula is still $\Sigma_1$ by \autoref{Lemma: Formula-normalization}.

\section{Well-foundedness of $\RFN_{\Sigma^{\alpha}_1}$-comparison}\label{well-foundedness-thm}
Theorem \ref{well-ordering-thm} demonstrates that $\le^{\Pi^\bfalpha_1}_{\RFN_{\Sigma^\bfalpha_1}}$ is well-ordered. However, the relation $\le^{\Pi^\bfalpha_1}_{\RFN_{\Sigma^\bfalpha_1}}$ relies on the notion of provability \emph{in the presence of a $\Pi^\bfalpha_1$ oracle}, which is far from the actual provability relation. Can we obtain a similar result for the usual provability relation? It was proven in \cite{Walsh2022Incompleteness} that there is no sequence $\langle T_n \mid n<\omega\rangle$ of $\Pi^1_1$-sound, $\Sigma^1_1$-definable extensions of $\mathsf{\Sigma^1_1\text{-}AC_0}$ such that $T_n\vdash \RFN_{\Pi^1_1}(T_{n+1})$, i.e., the relation
\begin{equation*}
    S <_{\RFN_{\Pi^1_1}} T :\equiv T \vdash \RFN_{\Pi^1_1}(S)
\end{equation*}
is well-founded on $\Pi^1_1$-sound, $\Sigma^1_1$-definable extensions of $\mathsf{\Sigma^1_1\text{-}AC_0}$.\footnote{This is an analogue of an earlier result in \cite{pakhomov2021reflection}, which concerns  $\Pi^1_1$-sound, $\Sigma^0_1$-definable extensions of $\mathsf{ACA}_0$.} In this section, we prove its set-theoretic analogue.
\begin{theorem} \label{Theorem: RFN-comparison-WF-main}
    There is no sequence $\langle T_n\mid n<\omega\rangle$ of $\Sigma^\bfalpha_1$-sound, $\Pi^\bfalpha_1$-definable extensions of $\KP_1 + \Exists(\bfalpha)$ such that for each $n$, $T_n\vdash \RFN_{\Sigma^\bfalpha_1}(T_{n+1})$.
\end{theorem}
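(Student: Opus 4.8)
The plan is to argue by contradiction: assuming a descending sequence $\langle T_n\mid n<\omega\rangle$ exists, I will manufacture a single theory in the class that proves its own $\Sigma^\bfalpha_1$-reflection modulo a true $\Pi^\bfalpha_1$ oracle, contradicting a Gödelian obstruction. First I would record that obstruction in oracle form: no $\Sigma^\bfalpha_1$-sound $\Pi^\bfalpha_1$-definable extension $U$ of $\KP_1+\Exists(\bfalpha)$ satisfies $U\vdash^{\Pi^\bfalpha_1}\RFN_{\Sigma^\bfalpha_1}(U)$. Indeed, if $U+\psi\vdash\RFN_{\Sigma^\bfalpha_1}(U)$ for a true $\Pi^\bfalpha_1$ sentence $\psi$, then \autoref{Lemma: True Pi 1 alpha does not affect reflection} gives $\KP_1+\Exists(\bfalpha)\vdash\RFN_{\Sigma^\bfalpha_1}(U)\to\RFN_{\Sigma^\bfalpha_1}(U+\psi)$, so $U+\psi$ would be a $\Sigma^\bfalpha_1$-sound $\Pi^\bfalpha_1$-definable theory proving its own reflection, contradicting the second-incompleteness corollary noted in the introduction.

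Next I establish monotonicity of the ordinal invariant along the sequence: $T_n\vdash\RFN_{\Sigma^\bfalpha_1}(T_{n+1})$ implies $|T_{n+1}|_{\Sigma^\bfalpha_1}\le|T_n|_{\Sigma^\bfalpha_1}$. Here the oracle is precisely what repairs the fact that, for a $\Pi^\bfalpha_1$-definable $T_{n+1}$, the predicate $\Pr_{T_{n+1}}$ is not $\Sigma_1$. Given $\gamma<|T_{n+1}|_{\Sigma^\bfalpha_1}$, \autoref{Lemma: limit-case} and \autoref{Lemma: equality-invariants} supply a representation $\bfgamma$ of some $\gamma'\in(\gamma,|T_{n+1}|_{\Sigma^\bfalpha_1})$ with an \emph{actual} $T_{n+1}$-proof of $\Exists(\bfgamma)^{L_\bfalpha}$; feeding the (true, $\Pi^\bfalpha_1$) axiom-membership facts for the finitely many axioms used in that proof into the oracle lets $T_n$ establish $\Pr_{T_{n+1}}(\Exists(\bfgamma)^{L_\bfalpha})$, whence its theorem $\RFN_{\Sigma^\bfalpha_1}(T_{n+1})$ yields $T_n\vdash^{\Pi^\bfalpha_1}\Exists(\bfgamma)^{L_\bfalpha}$. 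A Kleene-normal-form argument (\autoref{Lemma: alpha-Kleene}), run as in \autoref{Lemma: SmallerOrdinal-Tprovablyexists}, converts this into $\gamma'<|T_n|_{\Sigma^\bfalpha_1}$, so $\gamma<|T_n|_{\Sigma^\bfalpha_1}$. Consequently $\langle|T_n|_{\Sigma^\bfalpha_1}\rangle_n$ is a non-increasing sequence of ordinals and hence eventually constant, equal to some $\lambda$; after discarding an initial segment I may assume $|T_n|_{\Sigma^\bfalpha_1}=\lambda$ for all $n$.

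It remains to contradict the existence of a chain in which each $T_n$ reflects $T_{n+1}$ while all share the ordinal $\lambda$. I would do this by a recursion-theorem diagonalization: using the self-referential definability of a theory, build a $\Pi^\bfalpha_1$-definable theory $U$ that is $\Sigma^\bfalpha_1$-equivalent (modulo the oracle) to the $T_n$ and that proves $\RFN_{\Sigma^\bfalpha_1}$ of a theory it can internally identify with itself, so that $U\vdash^{\Pi^\bfalpha_1}\RFN_{\Sigma^\bfalpha_1}(U)$; the point of keeping $U$ inside the chain is that $\Sigma^\bfalpha_1$-soundness of $U$ is inherited from the soundness of the $T_n$. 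Together with the first paragraph this is the desired contradiction.

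The main obstacle is exactly this last step, and it is why the theorem cannot be read off from \autoref{well-ordering-thm}. For $\Pi^\bfalpha_1$-definable theories the provability predicate is only $\Sigma^\bfalpha_2$, and the reflection-transfer ``$|T|_{\Sigma^\bfalpha_1}\le|U|_{\Sigma^\bfalpha_1}\Rightarrow T\le^{\Pi^\bfalpha_1}_{\RFN_{\Sigma^\bfalpha_1}}U$'' underlying \autoref{well-ordering-thm} genuinely fails in this class (cf.\ the cited counterexample). Thus a single step $T_n\vdash\RFN_{\Sigma^\bfalpha_1}(T_{n+1})$ with $|T_n|_{\Sigma^\bfalpha_1}=|T_{n+1}|_{\Sigma^\bfalpha_1}$ need not be contradictory on its own; the contradiction must be extracted from the whole infinite chain. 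Arranging the diagonal theory $U$ so that its self-reference invokes only oracle-certifiable ($\Pi^\bfalpha_1$) information about the $T_n$, while simultaneously guaranteeing both $\Pi^\bfalpha_1$-definability and $\Sigma^\bfalpha_1$-soundness of $U$, is the delicate, novel core of the argument.
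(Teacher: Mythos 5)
Your proposal has two genuine gaps, and it also misdiagnoses where the difficulty lies. First, the ``G\"{o}delian obstruction'' you invoke in your opening paragraph---that no $\Sigma^\bfalpha_1$-sound, $\Pi^\bfalpha_1$-definable extension of $\KP_1+\Exists(\bfalpha)$ proves $\RFN_{\Sigma^\bfalpha_1}$ of itself---is stated in the introduction only as a \emph{corollary} of \autoref{Theorem: RFN-comparison-WF-main} (it is the constant-sequence case $T_n=T$), so citing it is circular. Because $T$ is merely $\Pi^\bfalpha_1$-definable rather than recursively axiomatized, you cannot fall back on the standard second incompleteness theorem either; an independent proof of this obstruction is essentially as hard as the theorem itself, and you do not supply one. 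Second, your endgame---a recursion-theorem construction of a theory $U$ inside the chain with $U\vdash^{\Pi^\bfalpha_1}\RFN_{\Sigma^\bfalpha_1}(U)$---is never carried out; you explicitly defer ``the delicate, novel core of the argument,'' so the proof is incomplete exactly where the work has to happen.

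More importantly, the route is unnecessary, and your claim that a single step $T_n\vdash\RFN_{\Sigma^\bfalpha_1}(T_{n+1})$ with $|T_n|_{\Sigma^\bfalpha_1}=|T_{n+1}|_{\Sigma^\bfalpha_1}$ ``need not be contradictory on its own'' is false. The paper's key lemma (\autoref{Lemma: KeyLemma-RFN-comparison-WF}) produces, for each $\Pi^\bfalpha_1$-definable $T$, a $\Sigma_1$ formula $\phi_T(\xi)$ asserting that every $T$-provable $\Sigma^\bfalpha_1$ sentence holds in $L_\xi$---this is $\Sigma^\bfalpha_1$ precisely because the $\Pi^\bfalpha_1$ axiom-membership condition sits in the antecedent---whose least witness in $L_\alpha$ is exactly $|T|_{\Sigma^\bfalpha_1}$, and such that $\KP_1+\Exists(\bfalpha)$ proves $\RFN_{\Sigma^\bfalpha_1}(T)\to\exists\xi<\bfalpha\,\phi_T^{L_\bfalpha}(\xi)$; that implication rests on a boundedness lemma (\autoref{Lemma: Pi1 boundness for alpha}) proved by diagonalizing against a would-be $\Pi_1$ truth definition for $\Sigma_1$ sentences. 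Given this, $T_n\vdash\RFN_{\Sigma^\bfalpha_1}(T_{n+1})$ yields $T_n\vdash\exists\xi<\bfalpha\,\phi_{T_{n+1}}^{L_\bfalpha}(\xi)$, hence a witness $\xi<|T_n|_{\Sigma^\bfalpha_1}$, hence the \emph{strict} inequality $|T_{n+1}|_{\Sigma^\bfalpha_1}<|T_n|_{\Sigma^\bfalpha_1}$, and the infinite descent is immediate---no oracle manipulation, no passage to an eventually constant value $\lambda$, and no self-referential theory are needed. Your monotonicity paragraph (certifying a concrete $T_{n+1}$-proof by feeding finitely many true $\Pi^\bfalpha_1$ axiom-membership facts to the oracle) is sound as far as it goes, but it extracts only $\le$ where the reflection hypothesis actually gives $<$.
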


Our proof will and must be different from what is presented in \cite{Walsh2022Incompleteness}. The main reason is that we are using an $\alpha$-recursive characteristic function as a set-theoretic analogue of definable well-orders. However, this $\alpha$-recursive characteristic function is unary, unlike well-orders.

The following lemma is a form of diagonal lemma we will use:
\begin{lemma} \label{Lemma: Diagonal Lemma for two variables}
    Let $\phi(n,x)$ be a $\Sigma_1$-formula. Then we can find a $\Sigma_1$-formula $\psi$ such that 
    \begin{equation*}
        \KP_1 \vdash \forall x [\psi(x) \leftrightarrow \phi(\ulcorner \psi\urcorner, x)].
    \end{equation*}
\end{lemma}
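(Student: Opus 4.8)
The plan is to run the usual Gödel--Carnap diagonalization, but to carry the set variable $x$ along untouched as a parameter and to take care that each syntactic step preserves $\Sigma_1$-complexity. Throughout I would work over $\KP_1$, where codes of formulas live in $\omega$ (which exists by Infinity) and the relevant syntactic operations are primitive recursive on codes, hence $\KP_1$-provably total with $\Delta_1$ graph, following the treatment of primitive recursive set functions in \cite{Rathjen1992PrimRecSetFtn}.

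First I would isolate the relevant \emph{diagonal function}. Let $v$ denote a distinguished number-variable slot, and let $\mathrm{d}(n)$ be the function that, given a code $n$ of a formula $\eta(v,x)$ with free variables $v$ and $x$, returns the code of the formula $\eta(\overline{n},x)$ obtained by substituting the numeral $\overline n$ for $v$ and leaving $x$ free. This is standard arithmetized substitution, so $\mathrm{d}$ is $\KP_1$-provably total and $\Delta_1$-definable, and it satisfies numeralwise correctness: for each \emph{standard} $m$, writing $k := \mathrm{d}(m)$ for its true value, $\KP_1 \vdash \mathrm{d}(\overline m) = \overline k$ (provable $\Sigma_1$-completeness for $\Sigma_1$-definable total functions).

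Now, given the $\Sigma_1$-formula $\phi(n,x)$, set $\eta(v,x) :\equiv \phi(\mathrm{d}(v),x)$. Since $\mathrm{d}$ is a $\Sigma_1$-definable function and $\phi$ is $\Sigma_1$, composing $\mathrm{d}$ into the $n$-slot — with $x$ riding along as a parameter — yields a $\Sigma_1$-formula by \autoref{Lemma: Sigma1-composition-complexity}. Let $m := \ulcorner \eta \urcorner$ and define
\begin{equation*}
    \psi(x) :\equiv \eta(\overline m, x) \equiv \phi\big(\mathrm{d}(\overline m), x\big).
\end{equation*}
Substituting the closed numeral $\overline m$ for $v$ does not raise complexity, so $\psi$ is again $\Sigma_1$, as required. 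By the very definition of $\mathrm{d}$, the metatheoretic code of $\psi$ is $\ulcorner\psi\urcorner = \mathrm{d}(m)$, whence numeralwise correctness gives $\KP_1 \vdash \mathrm{d}(\overline m) = \overline{\ulcorner\psi\urcorner}$. Combining this with the syntactic identity $\psi(x)\equiv\phi(\mathrm{d}(\overline m),x)$ yields
\begin{equation*}
    \KP_1 \vdash \forall x\,\big[\psi(x) \leftrightarrow \phi(\mathrm{d}(\overline m), x)\big] \quad\text{and}\quad \KP_1 \vdash \mathrm{d}(\overline m) = \overline{\ulcorner\psi\urcorner},
\end{equation*}
and substituting provably equal terms gives $\KP_1 \vdash \forall x\,[\psi(x) \leftrightarrow \phi(\ulcorner\psi\urcorner, x)]$, which is the claim.

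I expect the main obstacle to be bookkeeping rather than conceptual: one must verify that the substitution operation $\mathrm{d}$ is genuinely $\Sigma_1$-definable and $\KP_1$-provably total (so that numeralwise correctness is available over the weak base theory), and — the one genuinely new point relative to the classical one-variable diagonal lemma — that carrying the set parameter $x$ through the composition $\phi(\mathrm{d}(v),x)$ and through the subsequent numeral substitution does not disturb the $\Sigma_1$ bound. Both points are secured by the primitive-recursive-set-function framework together with \autoref{Lemma: Sigma1-composition-complexity}.
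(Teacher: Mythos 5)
Your proposal is correct and follows essentially the same route as the paper: the paper also defines the diagonal substitution function $d(\ulcorner\theta\urcorner)=\ulcorner\theta(\underline{\ulcorner\theta\urcorner},x)\urcorner$, forms the $\Sigma_1$-formula $\phi'(n,x)$ equivalent to $\exists m<\omega\,(d(n)=m\land\phi(m,x))$ (your $\eta$), and sets $\psi(x)\equiv\phi'(\underline{\ulcorner\phi'\urcorner},x)$, with the complexity bookkeeping handled exactly as you do via the normalization lemmas. The only cosmetic difference is that the paper observes $d$ is $\Delta_0$-definable with parameter $\omega$ rather than invoking the primitive-recursive-set-function framework.
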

\begin{proof}
    Consider the following computable function for formulas $\theta$ with two free variables:
    \begin{equation*}
        d(\ulcorner\theta\urcorner) = \ulcorner\theta(\underline{\ulcorner\theta\urcorner},x)\urcorner
    \end{equation*}
    Here $\underline{n}$ means the numeral for $n$, i.e., the symbol for $1+\cdots + 1$ with $n$ many 1. $d$ is computable, so it is $\Delta_0$-definable with a parameter $\omega$ over $\KP_1$.
    Now for a given $\Sigma_1$-formula $\phi(n,x)$, let $\phi'(n,x)$ be a $\Sigma_1$-formula that is $\KP_1$-provably equivalent to 
    \begin{equation*}
        \exists m<\omega (d(n)=m \land \phi(m,x)).
    \end{equation*}
    Then for every $\Sigma_1$-formula $\theta$ with two variables we have
    \begin{equation*}
        \KP_1 \vdash \phi'(\underline{\ulcorner\theta\urcorner},x) \leftrightarrow \phi(\ulcorner\theta(\underline{\ulcorner\theta\urcorner},x)\urcorner,x).
    \end{equation*}
    Now take $\theta\equiv \phi'$, and let $\psi(x)\equiv \phi'(\underline{\ulcorner\phi'\urcorner},x)$.
    Then we get
    \begin{equation*}
        \KP_1 \vdash \psi(x) \leftrightarrow \phi(\ulcorner\psi\urcorner,x).
    \end{equation*}
    Since $x$ occurs free, we have the desired result by universal generalization.
\end{proof}

First, we will prove a set-theoretic analogue of the $\Sigma^1_1$-boundedness lemma:
\begin{lemma}[$\KP_1 + (V=L)$] \label{Lemma: Pi1 boundness for Ord}
    Suppose that $c$ is a set and let $f\colon c\rightharpoonup  \Ord$ be a $\Sigma_1$-definable partial function.
    Furthermore, assume that for a $\Pi_1$-formula $\phi(x)$, $\operatorname{dom}(f)  \supseteq \{x\in c\mid\phi(x)\}$.
    Then the class $\{f(x)\mid x\in c\land \phi(x)\}\subseteq\Ord$ is bounded.
\end{lemma}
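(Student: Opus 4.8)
The plan is to reduce the statement to a single application of $\Sigma_1$-Collection. The naive attempt---separate out $D := \{x \in c : \phi(x)\}$ as a set and apply $\Sigma_1$-Replacement to $f\upharpoonright D$---fails, since $\phi$ is only $\Pi_1$ and so $D$ need not be a set in $\KP_1$. The key idea is instead to \emph{totalize} $f$ over the whole set $c$ by assigning a dummy value off the $\Pi_1$-definable set, exploiting the fact that $\lnot\phi$ is $\Sigma_1$.

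Concretely, I would work with the relation
\[
    R(x,y) \;:\equiv\; \big(y = f(x)\big) \,\lor\, \big(\lnot\phi(x) \land y = \varnothing\big).
\]
First I would check that $R$ is normalizable to a $\Sigma_1$-formula: $y = f(x)$ is $\Sigma_1$ by hypothesis, $\lnot\phi(x)$ is $\Sigma_1$ because $\phi$ is $\Pi_1$, and the conjunction with the $\Delta_0$-formula $y = \varnothing$ together with the outer disjunction are both handled by \autoref{Lemma: Formula-normalization}. Next I would verify, using the law of excluded middle, that $\forall x \in c\,\exists y\, R(x,y)$: if $\phi(x)$ holds then $x \in \operatorname{dom}(f)$ by the domain hypothesis, so $y = f(x)$ witnesses the first disjunct; otherwise $\lnot\phi(x)$ holds and $y = \varnothing$ witnesses the second.

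Then I would apply $\Sigma_1$-Collection to obtain a set $b$ with $\forall x \in c\,\exists y \in b\, R(x,y)$, use $\Delta_0$-Separation to form $b' := \{y \in b : \text{$y$ is an ordinal}\}$ (recall that being an ordinal is $\Delta_0$), and set $\beta := \bigcup b'$, which is again an ordinal as the supremum of a set of ordinals. The payoff is the following observation: if $x \in c$ and $\phi(x)$, then $\lnot\phi(x)$ fails, so the second disjunct of $R(x,y)$ is vacuous and $R(x,y) \leftrightarrow y = f(x)$; hence the witness guaranteed by collection must equal $f(x)$, forcing $f(x) \in b'$ and therefore $f(x) \le \beta$. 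This bounds $\{f(x) : x \in c \land \phi(x)\}$ by $\beta+1$.

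The main obstacle is precisely the non-separability of the $\Pi_1$ domain, which the totalization trick circumvents; the one place demanding care is the complexity bookkeeping showing $R$ is $\Sigma_1$. This step is exactly where the $\Pi_1$ hypothesis on $\phi$ is indispensable---were $\phi$ instead $\Sigma_1$, the disjunct $\lnot\phi(x) \land y = \varnothing$ would be $\Pi_1$ and $R$ would be a genuine $\Sigma_1 \lor \Pi_1$ formula, not normalizable to $\Sigma_1$, so $\Sigma_1$-Collection would no longer apply. I note that the argument uses neither Foundation nor $V = L$, so it goes through in $\KP_1$ alone, \emph{a fortiori} in $\KP_1 + (V=L)$.
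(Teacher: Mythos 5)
Your proof is correct, but it takes a genuinely different route from the paper's. You argue directly: totalize $f$ on all of $c$ by sending points where the ($\Sigma_1$-definable) negation $\lnot\phi$ holds to a dummy value, check that the resulting graph relation $R$ is still normalizable to $\Sigma_1$ (this is exactly where the $\Pi_1$-ness of $\phi$ is indispensable, as you correctly isolate), and then apply $\Sigma_1$-Collection once to trap every value $f(x)$ with $\phi(x)$ inside a set; the verification that the collected witness must be $f(x)$ itself, because $\phi(x)$ kills the dummy disjunct, is exactly right. The paper instead proceeds by contradiction and by diagonalization: assuming the class is cofinal in $\Ord$, it shows that the $\Sigma_1$ partial truth predicate becomes $\Pi_1$-definable via $\vDash_{\Sigma_1}\psi(p) \leftrightarrow \exists x\in c\,[\phi(x)\land\forall\xi(f(x)=\xi\to L_\xi\models\psi(p))]$ (which uses $V=L$ to reflect $\Sigma_1$ truths into levels $L_\xi$), and then refutes this with a $\Sigma_1$ liar sentence obtained from \autoref{Lemma: Diagonal Lemma for two variables}. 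Your argument is shorter, avoids the truth-predicate and fixed-point machinery entirely, and, as you observe, needs neither $V=L$ nor (essentially) Foundation, so it proves the statement over $\KP_1$ alone; it also bears on the question raised in the remark following this lemma, since it is diagonalization-free (though it is not ``Gentzen-style'' in the cut-elimination sense that remark has in view). The paper's proof, by contrast, runs parallel to the classical proof of $\Sigma^1_1$-bounding via the $\Pi^1_1$-completeness of well-foundedness. One very small caveat: passing from $f(x)\in b'$ to $f(x)\le\bigcup b'$ does use the comparability of ordinals (available in $\KP_1$ via the cited trichotomy results), so ``uses no Foundation'' is a hair too strong, but the essential content---that the class is contained in a set---needs nothing beyond $\Sigma_1$-Collection, $\Delta_0$-Separation, and \autoref{Lemma: Formula-normalization}.
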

\begin{proof}
$\Sigma_1$ truth is $\Sigma_1$-definable over $\KP_1$ as in \autoref{Definition: Sigma n truth}. That means that for \emph{every} $\Sigma_1$ sentence $\sigma$, 
    \begin{equation}\label{truth-dfn}
        \KP_1\vdash\sigma \leftrightarrow \mathsf{True}_{\Sigma_1}(\ulcorner\sigma\urcorner, 0).
    \end{equation}
    
    Now let us \textbf{reason within} $\KP_1 + (V=L)$: Suppose, toward a contradiction, that the class $\{f(x)\mid x\in c\land \phi(x)\}$ is cofinal in the class of all ordinals.
    Now we claim that the partial truth predicate for $\Sigma_1$-formulas is $\Pi_1$-definable. In fact, we can see that the following equivalence holds for all $p$ and for each $\Sigma_1$ formula $\psi$:
    \begin{equation*}
        \vDash_{\Sigma_1}\psi(p) \iff \exists x\in c [\phi(x) \land \forall \xi (f(x)=\xi\to L_{\xi}\models \psi(p))].
    \end{equation*}
    The equivalence holds due to the cofinality of the class $\{f(x)\mid x\in c\land \phi(x)\}$, and the right-hand formula is $\Pi_1$ since the bounded quantifier does not alter the complexity of the formula (due to $\Sigma_1$-Collection), $\phi(x)$ is $\Pi_1$, and the formula
    \begin{equation*}
        \forall \xi (f(x)=\xi\to L_{\xi}\models \phi(p))
    \end{equation*}
    is $\Pi_1$, which follows from the $\Sigma_1$-definability of $f$ and the $\Delta_1$-definability of $L_\xi$.

So we have concluded that $\vDash_{\Sigma_1}$ is $\Pi_1$-definable. That is, there is a $\Pi_1$-formula $\mathsf{True}^\star_{\Sigma_1}(e,x)$ such that
    \begin{equation}\label{truth-equiv}
       \forall \varphi \in \Sigma_1 \forall x \Big( \mathsf{True}^\star_{\Sigma_1}(\varphi,x) \leftrightarrow  \mathsf{True}_{\Sigma_1}(\varphi,x) \Big)
    \end{equation}

By \autoref{Lemma: Diagonal Lemma for two variables}, there is a $\Sigma_1$-sentence $\lambda$ such that:
    \begin{equation}\label{fixed-point}
          \lambda \iff \lnot \mathsf{True}^\star_{\Sigma_1}(\ulcorner\lambda\urcorner, 0).
    \end{equation}
So \eqref{fixed-point} and \eqref{truth-equiv} jointly entail:
\begin{flalign*}
    \lambda \iff \neg \mathsf{True}_{\Sigma_1}(\lambda,0)
\end{flalign*}
On the other hand, since $\lambda$ is $\Sigma_1$, \eqref{truth-dfn} entails:
\begin{equation*}
     \lambda \iff \mathsf{True}_{\Sigma_1}(\lambda,0).
\end{equation*}
A contradiction. Hence $$\{f(x)\mid x\in c\land \phi(x)\}$$ must be bounded.
\end{proof}

\begin{remark}
    Much care is taken in \cite{Walsh2022Incompleteness} to stick closely to Gentzen-style methods and to avoid diagonalization. That paper appeals to the $\Sigma^1_1$-bounding. However, Beckmann and Pohlers proved $\Sigma^1_1$-bounding via analysis of cut-free derivations \cite{beckmann1998applications}, whence appeals to $\Sigma^1_1$-bounding can still be considered ``diagonalization-free.'' This raises a question: Is \autoref{Lemma: Pi1 boundness for Ord} provable via Gentzen-style methods and without the use of diagonalization?
\end{remark}

By applying the above lemma to $L_\alpha$, which is a model of $\KP_1 + (V=L)$, we get the following:
\begin{lemma}[$\KP_1 + \Exists(\bfalpha)$] \label{Lemma: Pi1 boundness for alpha} \pushQED{\qed}
    Suppose that $c\in L_\alpha$ and let $f$ be a $\Sigma^\bfalpha_1$-definable partial function from $c$ to $\alpha$.
    Furthermore, assume that for a $\Pi^\bfalpha_1$-formula $\phi(x)$, $f$ is defined over $b:=\{x\in c\mid \phi(x)\}$ where $b$ is not necessarily a member of $L_\alpha$.
    Then there is $\gamma<\alpha$ such that for every $x\in b$, $f(x)<\gamma$. \qedhere 
\end{lemma}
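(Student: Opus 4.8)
The plan is to derive this lemma as the relativization to $L_\alpha$ of \autoref{Lemma: Pi1 boundness for Ord}. The one external ingredient is that, because $\bfalpha$ is a $\KP_1$-provably admissible representation, $\KP_1 + \Exists(\bfalpha)$ proves $L_\alpha \models \KP$, and hence $L_\alpha \models \KP_1 + (V=L)$ (admissibility supplies $\KP$, which contains $\KP_1$, and $L_\alpha$ satisfies $V=L$ for every $\alpha$).

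First I would unwind the hypotheses so as to exhibit them as the $L_\alpha$-relativizations of the hypotheses of \autoref{Lemma: Pi1 boundness for Ord}. By the definition of $\Sigma^\bfalpha_1$-definability, the function $f$ is given by $\psi_f^{L_\bfalpha}$ for a $\Sigma_1$-formula $\psi_f$, and the assumption that $f$ is a partial function from $c$ into $\alpha$ that is defined on $b$ becomes the single assertion
\[
    L_\alpha \models \text{``$\psi_f$ defines a partial function $c \rightharpoonup \Ord$ whose domain includes $\{x\in c \mid \theta(x)\}$.''}
\]
where $\phi = \theta^{L_\bfalpha}$ for a $\Pi_1$-formula $\theta$. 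Since $L_\alpha$ is transitive we have $\Ord^{L_\alpha} = \alpha$, so these are precisely the hypotheses of \autoref{Lemma: Pi1 boundness for Ord}, read inside $L_\alpha$, with $c \in L_\alpha$ serving as the ambient set.

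Next, \autoref{Lemma: Pi1 boundness for Ord} is, for this fixed pair of formulas $\psi_f$ and $\theta$, a theorem of $\KP_1 + (V=L)$. Combining this with $\KP_1 + \Exists(\bfalpha) \vdash L_\alpha \models \KP_1 + (V=L)$, the relativization principle shows that $\KP_1 + \Exists(\bfalpha)$ proves the $L_\alpha$-relativization of that instance. The relativized conclusion asserts the existence of a $\gamma \in \Ord^{L_\alpha}$ with $L_\alpha \models \forall x \in c\,(\theta(x) \to f(x) < \gamma)$. Reading $\Ord^{L_\alpha} = \alpha$ once more, $\gamma$ is an ordinal below $\alpha$; and since the value of $f$ computed inside $L_\alpha$ agrees with its externally defined value and the order relation on ordinals is $\Delta_0$, hence absolute, we conclude that $f(x) < \gamma$ for every $x \in b$, which is the desired bound.

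The step needing the most care is the bookkeeping of the relativization: one must verify that each hypothesis of the source lemma---the $\Sigma_1$-definability and functionality of $f$, the $\Pi_1$-ness of $\phi$, and the domain condition---is literally the $L_\alpha$-relativization of the corresponding unrelativized statement, and that ``bounded in $\Ord$'' relativizes to ``bounded below $\alpha$'' via $\Ord^{L_\alpha} = \alpha$. It is worth stressing that $b$ need not be an element of $L_\alpha$: the source lemma never requires $\{x \in c \mid \phi(x)\}$ to be a set (only $c$ is a set), so only the $\Pi_1$-definability of the domain restriction is used, and this survives relativization with no appeal to $\Pi_1$-separation.
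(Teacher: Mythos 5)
Your proposal is correct and is exactly the paper's argument: the paper derives this lemma in one line by relativizing \autoref{Lemma: Pi1 boundness for Ord} to $L_\alpha$, using that $\bfalpha$ is $\KP_1$-provably admissible so that $\KP_1+\Exists(\bfalpha)$ proves $L_\alpha\models\KP_1+(V=L)$. Your additional bookkeeping (that $\Ord^{L_\alpha}=\alpha$, that the hypotheses are literally $L_\alpha$-relativizations, and that $b$ need not be an element of $L_\alpha$ since the source lemma only uses $\Pi_1$-definability of the domain restriction) is precisely what the paper leaves implicit.
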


Now we prove the set-theoretic analogue of the key lemma (Lemma 3.4 of \cite{Walsh2022Incompleteness}):
\begin{lemma}\label{Lemma: KeyLemma-RFN-comparison-WF}
    Let $T$ be a $\Sigma^\bfalpha_1$-sound, $\Pi^\bfalpha_1$-definable extension of $\KP_1 + \Exists(\bfalpha)$.
    Then we can find a $\Sigma_1$-formula $\phi_T(x)$ such that
    \begin{equation*}
        \KP_1 + \Exists(\bfalpha)\vdash \RFN_{\Sigma^\bfalpha_1}(T)\to \exists \xi<\bfalpha \phi_T^{L_\bfalpha}(\xi),
    \end{equation*}
    and 
    $|T|_{\Sigma^\bfalpha_1}$ is the least ordinal $\xi$ satisfying $\phi_T^{L_\alpha}(\xi)$.
\end{lemma}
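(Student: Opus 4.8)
The plan is to reformulate $|T|_{\Sigma^\bfalpha_1}$ as a supremum of witness levels, and to let $\phi_T(\xi)$ express that $L_\xi$ already contains a witness for every $\Sigma_1$ sentence that $T$ proves relativized to $L_\bfalpha$. For a code $e$ of a $\Delta_0$ formula $A_e$, write $\sigma_e :\equiv \exists x\, A_e(x)$ and let $\mathrm{wl}(e)$ be the least $\eta$ with $\exists x \in L_\eta\, A_e(x)$. Since $L_\beta \models \sigma_e$ holds exactly when $\beta \ge \mathrm{wl}(e)$, unwinding the definition of $|T|_{\Sigma^\bfalpha_1}$ (cf.\ \autoref{Lemma: equality-invariants}) gives $|T|_{\Sigma^\bfalpha_1} = \sup\{\mathrm{wl}(e) : T \vdash \sigma_e^{L_\bfalpha}\}$. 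This supremum is the quantity that $\phi_T$ must pin down as a least witness.

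To define $\phi_T$ I would quantify over \emph{proofs} rather than invoke the provability predicate directly, since $\Pr_T$ for a $\Pi^\bfalpha_1$-definable $T$ is only $\Sigma^\bfalpha_2$, whereas proof-checking is of lower complexity. Fix the $\Pi_1$ formula $\tau$ for which $\{n : \tau^{L_\bfalpha}(n)\}$ is the $T$-axiom set, and let $\mathrm{ProofOf}_T(p)$ say that $p$ is a derivation whose conclusion is $\sigma_{e(p)}^{L_\bfalpha}$ and each of whose axioms satisfies $\tau$; since axiom-checking is $\Pi_1$ and ranges over the finitely many lines of $p$, the predicate $\mathrm{ProofOf}_T$ is $\Pi_1$. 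I then set $\phi_T(\xi) :\equiv \forall p\,[\mathrm{ProofOf}_T(p) \to \exists x \in L_\xi\, A_{e(p)}(x)]$, written with the \emph{unrelativized} $\tau$, so that relativizing $\phi_T$ to $L_\bfalpha$ or to $L_\alpha$ recovers the correct $T$-axiom predicate in each setting. The matrix has the form $\Pi_1 \to \Delta_1$, hence is $\Sigma_1$, and the quantifier $\forall p$ is bounded by $\omega$; by $\Sigma_1$-collection and \autoref{Lemma: Formula-normalization} this bounded universal collapses, so $\phi_T$ is genuinely $\Sigma_1$. This complexity collapse is the technical heart of the construction.

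For the first claim I would reason in $\KP_1 + \Exists(\bfalpha) + \RFN_{\Sigma^\bfalpha_1}(T)$. The map $F(p) := \mathrm{wl}(e(p))$ is a $\Sigma^\bfalpha_1$ partial function on $\omega$; whenever $\mathrm{ProofOf}_T^{L_\bfalpha}(p)$ holds we have $\Pr_T(\sigma_{e(p)}^{L_\bfalpha})$, so $\RFN_{\Sigma^\bfalpha_1}(T)$ forces $L_\bfalpha \models \sigma_{e(p)}$ and hence $F(p) < \bfalpha$ is defined. Thus $F$ is total on the $\Pi^\bfalpha_1$ set $b := \{p \in \omega : \mathrm{ProofOf}_T^{L_\bfalpha}(p)\}$, and \autoref{Lemma: Pi1 boundness for alpha} furnishes $\gamma < \bfalpha$ bounding $F[b]$; for such $\gamma$ every $T$-provable $\sigma_{e(p)}$ has a witness in $L_\gamma$, i.e.\ $\phi_T^{L_\bfalpha}(\gamma)$ holds, establishing $\exists \xi < \bfalpha\, \phi_T^{L_\bfalpha}(\xi)$.

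For the second claim, working externally in $V = L$, the same function $F$ is total on $b$ already by $\Sigma^\bfalpha_1$-soundness, so \autoref{Lemma: Pi1 boundness for alpha} yields $|T|_{\Sigma^\bfalpha_1} = \sup F[b] < \alpha$; and for $\xi < \alpha$ the relativized conclusion $\exists x \in L_\xi\, A_{e(p)}(x)$ is computed correctly inside $L_\alpha$, so $\phi_T^{L_\alpha}(\xi)$ holds exactly when $\xi \ge \mathrm{wl}(e(p))$ for every $p \in b$, i.e.\ when $\xi \ge \sup F[b] = |T|_{\Sigma^\bfalpha_1}$. The least such $\xi$ is therefore $|T|_{\Sigma^\bfalpha_1}$. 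The step I expect to demand the most care is precisely the complexity-and-relativization bookkeeping behind \autoref{Lemma: Formula-normalization}: keeping $\phi_T$ at the $\Sigma_1$ level by quantifying over proofs (so that axiomhood contributes only a $\Pi_1$ condition) and absorbing the bounded universal via admissibility, while verifying that the single unrelativized occurrence of $\tau$ yields the correct theory after relativization to both $L_\bfalpha$ and $L_\alpha$.
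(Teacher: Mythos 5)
Your proof is correct and follows essentially the same route as the paper's: the same formula $\phi_T(\xi)$ (asserting that every $T$-provable $\Sigma^\bfalpha_1$ theorem acquires a witness by level $\xi$), the same complexity collapse of the bounded universal via $\Sigma_1$-Collection, and the same appeal to \autoref{Lemma: Pi1 boundness for alpha} (where the paper argues by contradiction from unboundedness, you argue directly from totality of the witness-level function on the $\Pi^\bfalpha_1$ set of proofs). The only cosmetic divergence is that you quantify over derivations to sidestep $\Pr_T$ on the grounds that it is $\Sigma^\bfalpha_2$; in fact, since the existential over proofs is bounded by $\omega$ and the matrix is $\Pi_1$, $\Sigma_1$-Collection inside the admissible $L_\bfalpha$ already collapses $\Pr_T$ to a $\Pi^\bfalpha_1$ predicate---this is exactly the $\Pi_1$ formula $\theta$ the paper uses---so your workaround is sound but unnecessary.
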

\begin{proof}
    Let $\theta$ be a $\Pi_1$ formula such that $\theta^{L_\bfalpha}(\ulcorner\psi\urcorner)$ is equivalent to $T\vdash \psi$.
    Let $\phi_T(\xi)$ be the assertion `Every $T$-provable $\Sigma_1^\bfalpha$-sentence is valid over $L_\xi$,' which will have a role of $\prec_T$ in the proof of \cite{Walsh2022Incompleteness}, Lemma 3.4:
    \begin{equation*}
        \phi_T(\xi):= \forall \sigma\in \Sigma_1 (\theta(\sigma^{L_\bfalpha})\to L_\xi\models \sigma).
    \end{equation*}
    Since $\theta$ is $\Pi_1$, $\phi_T^{L_\bfalpha}(\xi)$ is $\Sigma^\bfalpha_1$. By the definition of $|T|_{\Sigma^\bfalpha_1}$, the least ordinal $\xi$ satisfying $\phi_T^{L_\bfalpha}(\xi)$ is $|T|_{\Sigma^\bfalpha_1}$.

    Now working inside $\KP_1 + \Exists(\bfalpha)$, for the sake of contradiction, suppose that $\RFN_{\Sigma^\bfalpha_1}(T)$ holds but $\exists \xi<\alpha\phi_T^{L_\bfalpha}(\xi)$ fails.
    That is, for each $\xi<\alpha$, we can find a $\Sigma_1$ sentence $\sigma$ such that $T\vdash \sigma^{L_\bfalpha}$ but $L_\xi\models \lnot\sigma$.
    Now take 
    \begin{equation*}
        b=\{\sigma\in \Sigma_1\mid T\vdash \sigma^{L_\alpha}\}.
    \end{equation*}
    $b$ may not be an element of $L_\alpha$, but it is a subset of the set of $\Sigma_1$-formulas, and we can view the set of all $\Sigma_1$-formulas as a recursive subset of $\omega$.
    That is, $b$ is separated by a $\Pi^\bfalpha_1$-formula from a set in $L_\alpha$.
    Furthermore, if we take $f$ to be
    \begin{equation*}
        f(\sigma) = \text{the least $\xi<\alpha$ such that }L_\xi\models \sigma \text{ if it exists,}
    \end{equation*}
    for a $\Sigma_1$ sentence $\sigma$, then $f$ is a partial $\Sigma^\bfalpha_1$ function.

    By $\RFN_{\Sigma^\bfalpha_1}$, we can see that $f$ is defined for all $\Sigma_1$-sentence $\sigma\in b$. However, by our assumption, for every $\xi<\alpha$, we can find $\sigma\in b$ such that $f(\sigma)>\xi$. This contradicts \autoref{Lemma: Pi1 boundness for alpha}.
\end{proof}

Let us remind the reader that we have argued that $T$-provability is also $\Pi^\bfalpha_1$ because definable objects over $\omega$ are $\alpha$-recursive for $\alpha\ge \omega_1^{\mathsf{CK}}$.
If $\alpha=\omega$, then the $T$-provability relation will be $\Sigma^\omega_2$.

Then the proof of \autoref{Theorem: RFN-comparison-WF-main} follows almost immediately:
\begin{proof}[Proof of \autoref{Theorem: RFN-comparison-WF-main}]
    Suppose that $\langle T_n\mid n<\omega\rangle$ is a sequence of $\Sigma^\bfalpha_1$-sound, $\Pi^\bfalpha_1$-definable extensions of $\KP_1 + \Exists(\bfalpha)$ such that for each $n$, $T_n\vdash \RFN_{\Sigma^\bfalpha_1}(T_{n+1})$.
    By \autoref{Lemma: KeyLemma-RFN-comparison-WF} and that $T_n\supseteq \KP_1 + \Exists(\bfalpha)$, we have $\Sigma_1$ formulas $\phi_{T_n}$ such that
    \begin{equation*}
        T_n\vdash \RFN_{\Sigma^\bfalpha_1}(T_{n+1})\to \exists\xi<\bfalpha \phi^{L_\bfalpha}_{T_{n+1}}(\xi).
    \end{equation*}
    Hence $T_n\vdash \exists \xi<\bfalpha \phi^{L_\bfalpha}_{T_{n+1}}(\xi)$.
    By the definition of $|T|_{\Sigma^\bfalpha_1}$, we have $L_{|T|_{\Sigma^\bfalpha_1}}\models \exists \xi \phi_{T_{n+1}}(\xi)$.
    
    Since $\phi_{T_{n+1}}$ is $\Sigma_1$, $L_{|T_n|_{\Sigma^\bfalpha_1}}\models \exists\xi \phi_{T_{n+1}}(\xi)$ implies $\exists \xi< |T_n|_{\Sigma^\bfalpha_1}\ L_\alpha\models\phi_{T_{n+1}}(\xi)$, which is equivalent to
    \begin{equation*}
        \exists \xi<|T_n|_{\Sigma^\bfalpha_1} \phi_{T_{n+1}}^{L_\alpha}(\xi),
    \end{equation*}
    so $|T_{n+1}|_{\Sigma^\bfalpha_1}<|T_n|_{\Sigma^\bfalpha_1}$. It holds for all $n$, so we have an infinite decreasing sequence of ordinals, a contradiction.
\end{proof}

\begin{remark}
    A version of G\"{o}del's second incompleteness follows as a special case, namely, $T$ cannot prove its own reflection principle $\RFN_{\Sigma_1^\bfalpha}(T)$ assuming that $T$ is a $\Sigma^\bfalpha_1$-sound $\Pi^\bfalpha_1$-definable extension of $\KP_1 + \Exists(\bfalpha)$. To see that this is an analogue of G\"{o}del's theorem, note that another way of stating G\"{o}del's theorem is that $T$ cannot prove its own reflection principle $\RFN_{\Pi^0_1}(T)$ assuming that $T$ is a $\Pi^0_1$-sound $\Sigma^0_1$-definable extension of $\mathsf{EA}$; see the discussion in \cite[\textsection 1]{Walsh2022Incompleteness}.
\end{remark}

\section{Higher pointclasses}\label{discussion}

Throughout this paper, we gave various characterizations of generalized ordinal analysis and reflection principles in the context of set theories. There are some points that we have not addressed in detail so far, but that are still worth discussing. First, we will discuss connections between our results and inductive definitions. Then we will discuss how to generalize our results to theories inconsistent with the axiom $V=L$.

\subsection{Inductive definitions}

In \autoref{alpha-recursion} we proved that $|T|_{\Sigma^\bfalpha_1}$ gauges the complexity of the $T$-provably total $\bfalpha$-recursive function. In this subsection, we will discuss the connections between $|T|_{\Sigma^\bfalpha_1}$ and inductive definitions. The classic theory of inductive definitions yields a ``bottom up'' perspective on the $\Pi^1_1$-definable sets. Pohlers has emphasized in various places that iterating inductive definitions yields a very fine-grained stratification of sets of natural numbers into pointclasses. For instance, iterated inductive definitions yield a fine-grained stratification of the $\Delta^1_2$ sets. In this subsection we will show that our generalized notions of ordinal analysis provide an alternate perspective on these fine-grained pointclasses.

First, let's briefly introduce inductive definitions. 

Let $\mathfrak{M}$ be a structure and $\phi(n, X)$ be an $\mathfrak{M}$ formula except with a set variable $X$ occurring positively. Then we may view $\phi$ as a monotone operator 
$$\Gamma_\phi(X)=\{n\mid \mathfrak{M}\models \phi(n,X)\}.$$
For a class $\mathbf{\Gamma}$ of formulas, we say that a set $X\subseteq \textsf{domain}(\mathfrak{M})$ is \emph{$\mathbf{\Gamma}$-inductive} if $X$ is the least fixed point $I_\phi$ of an operator $\Gamma_\phi$ for $\phi\in\mathbf{\Gamma}$. In previous work, other logicians have been particularly concerned with the structure $\mathbb{N}$ of natural numbers in an expansion of the language of arithmetic.
If $\mathbf{\Gamma}$ is the set of all first-order formulas over $\mathfrak{M}$, we call the $\mathbf{\Gamma}$-Inductive sets the \emph{inductive} sets.

We have just defined inductive sets from a top-down point-of-view, but there is also a standard understanding of inductive sets from a bottom-up point-of-view: Let $\phi(n, X)$ be a first-order formula with $X$ occurring positively, and let $\Gamma_\phi$ be the corresponding monotone operator.
Consider $I^\xi_\phi$ defined recursively by $I^\xi_\phi = \bigcup_{\eta<\xi} \Gamma_\phi(I^\eta_\phi)$.
Then there is a least ordinal $\gamma$ such that $I^\gamma_\phi=I^{\gamma+1}_\phi$, and $I^\gamma_\phi$ is an inductive set $I_\phi$ generated by $\phi$. We call $\gamma$ the \emph{closure ordinal of $\Gamma_\phi$}. Also, for $n\in I_\phi$, define the \emph{$\phi$-norm} by
\begin{equation*}
    |n|_\phi = \min \{\xi \mid n\in I^{\xi+1}_\phi\}.
\end{equation*}

It is well-known (cf. \cite{Moschovakis1974ElementaryInd}) that the inductive sets over $\mathbb{N}$ are precisely the $\Pi^1_1$-sets, which are also precisely the $\Sigma_1^{\omega_1^\mathsf{CK}}$-sets.
Furthermore, $|T|_{\omega_1^\mathsf{CK}}$ is the supremum of all $\phi$-norms of $n$ that is $T$-provably in $I_\phi$ (cf. \cite[\S6]{Pohlers2009ProofTheory}):
\begin{equation*}
    |T|_{\omega_1^\mathsf{CK}} = \sup\{|n|_\phi \mid \gamma\text{ is a closure ordinal of an arithmetical $\phi$ and }T\vdash n\in I_\phi\}.
\end{equation*}

We can climb beyond the $\Pi^1_1$ sets to higher pointclasses by iterating inductive definitions. For instance, if $\mathbf{\Gamma}$ is the pointclass of all inductive sets, then the $\mathbf{\Gamma}$-Inductive sets go beyond the pointlcass of $\Pi^1_1$ sets.
Pohlers \cite{Pohlers2022Performance} formulated a framework to describe sets defined by iterated inductive definitions by iterating the notion of \emph{Spector classes} due to Moschavovakis \cite{Moschovakis1974ElementaryInd} (see \cite[\S 3.3]{Pohlers2022Performance} for Pohlers' definitions).

\begin{definition}
    A \emph{Spector class} $\mathcal{S}$ over a structure $\mathfrak{M}=(M,R\in \mathcal{R},f\in\mathcal{F})$ is the collection $\mathbf{\Gamma}=\bigcup_{n<\omega}\mathbf{\Gamma}^n$ of all relations over $M$ satisfying the following conditions:
    \begin{enumerate}
        \item $\mathbf{\Gamma}$ is closed under Boolean combinations, first-order quantification, and trivial combinatorial substitutions.\footnote{Trivial substitution is a map over $\mathfrak{M}^n\times \mathcal{P}(\mathfrak{M})^m$ to itself permuting or repeating components, that is, a map of the form 
        \begin{equation*}
            (x_0,\cdots, x_{n-1}, A_0,\cdots, A_{m-1})
            \mapsto
            (x_{i_0},\cdots, x_{i_{n-1}}, A_{j_0},\cdots, A_{j_{m-1}})
        \end{equation*}
        for $0\le i_k < n$ and $0\le j_k<m$.
        }
        \item $\mathbf{\Gamma}$ is universal: For each $n$, there is an $(n+1)$-ary $U\in \mathbf{\Gamma}^{n+1}$ such that for every $n$-ary $R\in \mathbf{\Gamma}^n$ we can find $e\in M$ such that $\mathfrak{M}\models \forall \vec{x} [R(\vec{x})\leftrightarrow U(e,\vec{x})]$.
        \item $\mathbf{\Gamma}$ is normed: For every $R\in\mathbf{\Gamma}$ there is an ordinal $\lambda$ and a norm $\sigma_R\colon R\to \lambda$. Furthermore, we have stage comparison relations $<^*_{\sigma_R},\le^*_{\sigma_R} \in \mathbf{\Gamma}$ satisfying
        \begin{equation*}
            \vec{m} <^*_{\sigma_R} \vec{n} \iff R(\vec{m})\land [R(\vec{n})\to \sigma(\vec{m})< \sigma(\vec{n})],
        \end{equation*}
        \begin{equation*}
            \vec{m} \le^*_{\sigma_R} \vec{n} \iff R(\vec{m})\land [R(\vec{n})\to \sigma(\vec{m})\le \sigma(\vec{n})].
        \end{equation*}
        \item $\mathcal{R\cup F}\subseteq \mathbf{\Gamma}\cap\check{\mathbf{\Gamma}}$, where $\check{\mathbf{\Gamma}} = \bigcup \{R\subseteq M^n\mid M^n\setminus R\in \mathbf{\Gamma}^n\}$.
    \end{enumerate}

    For each structure $\mathfrak{M}$, $\mathbb{SP}(\mathfrak{M})$ is the intersection of all Spector classes over $\mathfrak{M}$.
\end{definition}

The class of all inductive sets is an example of a Spector class, and in fact, the least Spector class over $\mathbb{N}$.
In general, for an acceptable%
\footnote{Roughly, a structure is acceptable if it admits a definable tupling function $x_0,\cdots, x_n\mapsto \langle x_0,\cdots, x_n\rangle$ and its decoding functions.} 
structure $\mathfrak{M}$ over a finitary language, $\mathbb{SP}(\mathfrak{M})$ is precisely the set of all inductive sets over $\mathfrak{M}$ (Corollary 9A.3 of \cite{Moschovakis1974ElementaryInd}).

\begin{definition}
    Let $\mathfrak{M}$ be an acceptable
    structure. Define
    \begin{itemize}
        \item $\mathbb{SP}_\mathfrak{M}^0 = \varnothing$, 
        \item $\mathbb{SP}_\mathfrak{M}^{\xi+1} = \mathbb{SP}((\mathfrak{M}; \mathbb{SP}^\xi_\mathfrak{M}))$,
        \item $\mathbb{SP}^\delta_\mathfrak{M}=\bigcup_{\xi<\delta} \mathbb{SP}^\xi_\mathfrak{M}$ if $\delta$ is a limit.
    \end{itemize}
\end{definition}

$\mathbb{SP}^\xi_\mathbb{N}$ should give a fine division of pointclasses, which corresponds to pointsets that are many-one reducible to the $\xi$th iterate of hyperjumps of 0.
$\mathbb{SP}^1_\mathbb{N}$ is precisely the collection of all inductive sets, or equivalently, lightface $\Pi^1_1$-sets.
For a recursive ordinal $\xi$, we may view $(\mathbb{N};\mathbb{SP}^\xi_\mathbb{N})$ as a finitary structure%
\footnote{Though Pohlers (cf. \cite{Pohlers2022Performance}, Theorem 3.32.) cites Moschovakis, he does not provide an explanation of how to circumvent Moschovakis' technical restriction to finitary languages. Thus, let us provide a sketch of how to do so: 
Take a $\Sigma^0_1$-definable $D\subseteq \mathbb{N}$ and $R\subseteq D^2$ such that $R$ is a well-order of order-type $\alpha$. For each $\xi<\alpha$ choose a universal set $U_{\xi}$ from $\mathbb{SP}^{\xi}_\mathbb{N}$ if $\mathbb{SP}^{\xi}_\mathbb{N}$ is a Spector class. Fix an order isomorphism $f\colon \alpha\to (D,R)$ and take $U=\{\langle n, m\rangle \mid m\in U_{f(n)} \}$. Then $(\mathbb{N};\mathbb{SP}^\alpha_\mathbb{N})$ and $(\mathbb{N};+,\times,\le,0,D,R,U)$ have the same definable sets.}%
, so we may think of elements of $\mathbb{SP}^\xi_\mathbb{N}$ as sets given by $<(1+\xi)$-fold iterated inductive definition.

Since members of $\mathbb{SP}(\mathfrak{M})$ are precisely $\Pi^1_1$ sets over $\mathfrak{M}$ (see Chapter 8 of \cite{Moschovakis1974ElementaryInd}),
we can derive the following characterization of members of $\mathbb{SP}^2_\mathbb{N}$:
\begin{lemma}
    The collection of all \emph{Arithmetical-in-a-$\Pi^1_1$-parameter formulas} is the least collection of formulas containing $\Pi^1_1$ formulas \emph{without set free variables} and closed under Boolean combinations and number quantification.
    
    A formula $\phi(x,Y)$ is \emph{$\Pi^1_1$-in-a-$\Pi^1_1$-parameter} if it has of the form
    $\forall X \phi(x,X,Y)$ for some arithmetical-in-a-$\Pi^1_1$-parameter formula $\phi(x,X,Y)$ with all free variables displayed. $\Pi^1_1(\Pi^1_1)$ is the collection of all $\Pi^1_1$-in-a-$\Pi^1_1$-parameter formulas.
    
    For $A\subseteq\mathbb{N}$, $A\in \mathbb{SP}^2_\mathbb{N}$ if and only if $A=\{n \mid \mathbb{N}\models \phi(n)\}$ for some $\Pi^1_1$-in-a-$\Pi^1_1$-parameter formula $\phi(x)$.
\end{lemma}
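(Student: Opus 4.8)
The plan is to read the characterization directly off two facts already in play: that $\mathbb{SP}^1_\mathbb{N}$ is exactly the lightface $\Pi^1_1$ sets, and that for an acceptable structure $\mathfrak{M}$ the least Spector class $\mathbb{SP}(\mathfrak{M})$ consists precisely of the $\Pi^1_1$ sets over $\mathfrak{M}$ (Chapter 8 of \cite{Moschovakis1974ElementaryInd}). Instantiating the latter with $\mathfrak{M} = (\mathbb{N}; \mathbb{SP}^1_\mathbb{N})$ and then unwinding the definition of ``$\Pi^1_1$ over $\mathfrak{M}$'' should give the statement after a purely syntactic bookkeeping step.

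First I would record that $\mathbb{SP}^2_\mathbb{N} = \mathbb{SP}((\mathbb{N}; \mathbb{SP}^1_\mathbb{N}))$ by definition, and that $\mathbb{SP}^1_\mathbb{N} = \mathbb{SP}(\mathbb{N})$ is the lightface $\Pi^1_1$ pointclass. Before invoking the identification of least Spector classes with the $\Pi^1_1$ sets, I would check that $(\mathbb{N}; \mathbb{SP}^1_\mathbb{N})$ meets the hypotheses. Acceptability is inherited from $\mathbb{N}$, since adding relations cannot destroy a definable tupling function; the one genuine point is that $\mathbb{SP}^1_\mathbb{N}$ supplies infinitely many predicates, whereas the theorem is stated for finitary languages. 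This is exactly the issue flagged in the footnote above: replacing $\mathbb{SP}^1_\mathbb{N}$ by a single universal lightface $\Pi^1_1$ set $U$ yields a finitary structure $(\mathbb{N}; U)$ with the same definable sets, so the theorem applies and gives $\mathbb{SP}^2_\mathbb{N} = \{A : A \text{ is } \Pi^1_1 \text{ over } (\mathbb{N}; \mathbb{SP}^1_\mathbb{N})\}$.

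The remaining step is to match ``$\Pi^1_1$ over $(\mathbb{N}; \mathbb{SP}^1_\mathbb{N})$'' with $\Pi^1_1(\Pi^1_1)$, which I would do by unfolding both definitions. A set that is $\Pi^1_1$ over the structure is defined by a formula $\forall X\, \theta(x,X)$ whose matrix $\theta$ is first-order over $(\mathbb{N}; \mathbb{SP}^1_\mathbb{N})$ augmented by the relation $X$, with $\forall X$ ranging over all subsets of $\mathbb{N}$. Since the relations of the structure are exactly the lightface $\Pi^1_1$ sets, such a $\theta$ is precisely a Boolean combination, closed under number quantification, of membership statements in lightface $\Pi^1_1$ sets and in $X$ — that is, an arithmetical-in-a-$\Pi^1_1$-parameter formula — so $\forall X\,\theta$ is exactly a $\Pi^1_1$-in-a-$\Pi^1_1$-parameter formula. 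For the ($\Leftarrow$) direction I would run this reading backwards: given $\phi \in \Pi^1_1(\Pi^1_1)$, every $\Pi^1_1$ predicate appearing in its matrix lies in $\mathbb{SP}^1_\mathbb{N}$, so $\phi$ is $\Pi^1_1$ over the structure and hence defines a member of $\mathbb{SP}^2_\mathbb{N}$.

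The main obstacle is not the logical core, which is a two-line composition of known equalities, but the careful bookkeeping in this last step: ensuring the correspondence stays \emph{lightface} (the base $\Pi^1_1$ formulas carry no set parameters, matching the least — rather than relativized — Spector class), confirming that the set quantifier of ``$\Pi^1_1$ over $\mathfrak{M}$'' ranges over all reals rather than over $\mathbb{SP}^1_\mathbb{N}$, and justifying the infinitary-to-finitary reduction so that the cited theorem of Moschovakis is genuinely applicable. Each of these is routine once isolated, but each must be verified to make the otherwise immediate derivation correct.
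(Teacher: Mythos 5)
Your proposal is correct and follows essentially the same route the paper intends: the paper offers no written proof beyond the sentence preceding the lemma, which derives it directly from the fact that $\mathbb{SP}(\mathfrak{M})$ consists of the $\Pi^1_1$ sets over $\mathfrak{M}$ (Moschovakis, Ch.~8) applied to $\mathfrak{M}=(\mathbb{N};\mathbb{SP}^1_\mathbb{N})$, together with the finitary-language reduction already sketched in the paper's footnote. Your additional bookkeeping (lightface matching, the range of the set quantifier, and the syntactic unwinding of ``$\Pi^1_1$ over the structure'' into $\Pi^1_1(\Pi^1_1)$) fills in exactly the steps the paper leaves implicit.
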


By the Barwise-Gandy-Moschovakis theorem (See Chapter 9 of \cite{Moschovakis1974ElementaryInd}), the members of $\mathbb{SP}^{\mu+1}_\mathbb{N}$ are exactly the $\Sigma_1^{\omega_{\mu+1}^\mathsf{CK}}$-definable subsets of $\mathbb{N}$.
Pohlers also defines the following:
\begin{equation*}
    \kappa^\mu_\mathbb{N}(T) := \sup\{\sigma_R(\vec{m})+1\mid R =\{\vec{m}\in \mathbb{N}^n \mid F(\vec{m})\}\in \mathbb{SP}^\mu_\mathbb{N},\ T\vdash F(\vec{m})\}.
\end{equation*}
Informally, $\kappa^\mu_\mathbb{N}(T)$ corresponds to the supremum of the closure ordinals of the members of $\mathbb{SP}^\mu_\mathbb{N}$ that are provably definable over $T$.

If $\omega^\mathsf{CK}_{\mu+1}$ admits a representation, which is always the case for recursive $\mu$, then we have $\kappa_\mathbb{N}^{\mu+1}(T) = |T|_{\Sigma_1^{\omega_{\mu+1}^\mathsf{CK}}}$. 
That is, our results concerning the latter also characterize the closure ordinals of iterated Spector classes.
Accordingly, this provides a proof-theoretic analysis of the members of iterated Spector classes, even though iterated Spector classes are \emph{semantically} defined.
For example, by applying the results in Section 6 to $\mu=2$, we can derive the following:
\begin{corollary}
    Let $\mathbf{\Gamma}=\Pi^1_1(\Pi^1_1)$ and $T$, $U$ be $\widehat{\mathbf{\Gamma}}$-definable $\mathbf{\Gamma}$-sound extensions of $\KP_1 + \Exists(\omega_2^\mathsf{CK})$, where $\widehat{\mathbf{\Gamma}}=\{\lnot\phi \mid \phi\in \mathbf{\Gamma}\}$. Then 
    \begin{equation*}
        T\subseteq^{\widehat{\mathbf{\Gamma}}}_{\mathbf{\Gamma}} U \iff |T|_\mathbf{\Gamma}\le |U|_\mathbf{\Gamma}.
    \end{equation*}
\end{corollary}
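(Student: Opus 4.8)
The plan is to deduce this corollary from the $\subseteq^{\Pi^\bfalpha_1}_{\Sigma^\bfalpha_1}$-comparison theorem of \textsection\ref{comparison-theorem} by instantiating $\bfalpha = \omega_2^\mathsf{CK}$ and transporting every notion in the statement across the pointclass correspondence developed in this section. The heart of the matter is the identification of the pointclass $\mathbf{\Gamma} = \Pi^1_1(\Pi^1_1)$ over $\mathbb{N}$ with the set-theoretic complexity class $\Sigma_1^{\omega_2^\mathsf{CK}}$: by the preceding lemma, a set $A \subseteq \mathbb{N}$ lies in $\mathbb{SP}^2_\mathbb{N}$ exactly when it is $\Pi^1_1(\Pi^1_1)$-definable, while the Barwise-Gandy-Moschovakis theorem identifies $\mathbb{SP}^2_\mathbb{N}$ with the $\Sigma_1^{\omega_2^\mathsf{CK}}$-definable subsets of $\mathbb{N}$. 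Composing these with the standard translation $\star$ into the language of set theory lets us treat $\mathbf{\Gamma}$ as $\Sigma_1^{\omega_2^\mathsf{CK}}$ and $\widehat{\mathbf{\Gamma}}$ as its dual $\Pi_1^{\omega_2^\mathsf{CK}}$.

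First I would verify the hypotheses of the comparison theorem for $\bfalpha = \omega_2^\mathsf{CK}$. Since $\omega_2^\mathsf{CK}$ admits a representation (as noted above, this holds for every $\omega_{\mu+1}^\mathsf{CK}$ with recursive $\mu$, here $\mu=1$), and since building the clause $L_\xi \models \KP$ into the representation $R^{\omega_2^\mathsf{CK}}(\xi)$ of the least admissible ordinal above $\omega_1^\mathsf{CK}$ makes it a $\KP_1$-provably admissible representation, exactly as $\omega_1^\mathsf{CK}$ was handled in \textsection\ref{reasonably-definable}, the machinery of \textsection\ref{reasonably-definable}--\textsection\ref{comparison-theorem} applies verbatim. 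Under the correspondence of the previous paragraph, the hypothesis that $T$ and $U$ are $\widehat{\mathbf{\Gamma}}$-definable becomes $\Pi_1^{\omega_2^\mathsf{CK}}$-definability, and $\mathbf{\Gamma}$-soundness becomes $\Sigma_1^{\omega_2^\mathsf{CK}}$-soundness; these are precisely the hypotheses required by the comparison theorem.

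It then remains to match the two sides of the biconditional. On the ordinal side, the displayed identity $\kappa^{\mu+1}_\mathbb{N}(T) = |T|_{\Sigma_1^{\omega_{\mu+1}^\mathsf{CK}}}$ at $\mu+1 = 2$ gives $|T|_\mathbf{\Gamma} = \kappa^2_\mathbb{N}(T) = |T|_{\Sigma_1^{\omega_2^\mathsf{CK}}}$, and likewise for $U$, so $|T|_\mathbf{\Gamma} \le |U|_\mathbf{\Gamma}$ translates to $|T|_{\Sigma_1^{\omega_2^\mathsf{CK}}} \le |U|_{\Sigma_1^{\omega_2^\mathsf{CK}}}$. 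On the provability side, unwinding the definitions shows that $T \subseteq^{\widehat{\mathbf{\Gamma}}}_{\mathbf{\Gamma}} U$ asserts that every $\mathbf{\Gamma}$-consequence of $T$ obtainable with a true $\widehat{\mathbf{\Gamma}}$-oracle is likewise obtainable for $U$; under the identification this is exactly $T \subseteq^{\Pi_1^{\omega_2^\mathsf{CK}}}_{\Sigma_1^{\omega_2^\mathsf{CK}}} U$. With both sides translated, the corollary is an instance of the comparison theorem at $\bfalpha = \omega_2^\mathsf{CK}$. The main obstacle---and the only step requiring genuine care---is the pointclass identification itself: one must check that $\Pi^1_1(\Pi^1_1) = \Sigma_1^{\omega_2^\mathsf{CK}}$ is faithful not merely at the level of definable subsets of $\mathbb{N}$, but also respects soundness, the oracle-provability relation $\vdash^{\widehat{\mathbf{\Gamma}}}$, and the norms underlying $\kappa^2_\mathbb{N}$, so that the translation is an honest correspondence of the data feeding both theorems rather than a mere coincidence of pointclasses.
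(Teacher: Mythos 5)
Your proposal is correct and is exactly the derivation the paper intends: the paper gives no explicit proof, saying only that the corollary follows ``by applying the results in Section 6 to $\mu=2$,'' and your instantiation of the $\subseteq^{\Pi^\bfalpha_1}_{\Sigma^\bfalpha_1}$-comparison theorem at $\bfalpha=\omega_2^{\mathsf{CK}}$, mediated by the identification $\Pi^1_1(\Pi^1_1)=\mathbb{SP}^2_{\mathbb{N}}=\Sigma_1^{\omega_2^{\mathsf{CK}}}$ and the identity $\kappa^2_{\mathbb{N}}(T)=|T|_{\Sigma_1^{\omega_2^{\mathsf{CK}}}}$, is that argument. You are also right to flag that the faithfulness of the translation with respect to soundness, definability, and oracle provability is the genuinely load-bearing step; the paper leaves that verification implicit as well.
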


\subsection{Theories beyond $V=L$}

Theorem \ref{well-ordering-thm} demonstrates that a broad class of theories is well-ordered by a natural proof-theoretic comparison relation, namely, comparison of $\Sigma^\bfalpha_1$ theorems modulo a $\Pi^\bfalpha_1$ oracle. This is done by connecting this comparison relation to $\bfalpha$ ordinal analysis. By definition, $\bfalpha$ ordinal analysis measures how closely an axiom system coheres with G\"{o}del's constructible universe $L$. Thus, throughout this paper, we have assumed that $V=L$. We now turn to generalize Theorem \ref{well-ordering-thm} to axiomatic systems inconsistent with the assumption $V=L$. We are particularly interested in large cardinal axioms and other theories that prove the existence of $0^\sharp$.
Though we have assumed $V=L$ throughout our paper, the authors believe that very similar arguments should carry over to more complicated fine structural inner models allowing larger large cardinals.
The following proposition explains why this should be.
\begin{proposition}
    Let $\alpha$ be an admissible ordinal admitting a $\Sigma_1$ definition $\phi(x)$ and a $\Pi_1$ definition $\psi(x)$ such that $\mathsf{ZFC}^-$\footnote{$\ZFC^-$ is $\ZFC$ without power set but with Collection instead of Replacement.} proves $\exists x \phi(x)$ and $\phi(x)\leftrightarrow \psi(x)$.
    (In particular this applies in the case $\alpha=\omega_1^\mathsf{CK}$.) If $\vec{E}$ is an extender sequence, then $J^{\vec{E}}_\alpha = J_\alpha = L_\alpha$. 
\end{proposition}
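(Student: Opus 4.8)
The plan is to reduce everything to the single assertion that the sequence $\vec{E}$ carries no active extender at or below level $\alpha$; once this is in hand, both equalities follow from standard facts about the Jensen hierarchy. First I would record the routine equality $J_\alpha = L_\alpha$, which holds for every admissible $\alpha$ because admissibility gives $\omega\cdot\alpha = \alpha$ while $J_\alpha = L_{\omega\cdot\alpha}$ under the usual indexing. The substantive content is the equality $J^{\vec{E}}_\alpha = J_\alpha$, and for this it suffices to show that $E_\xi = \varnothing$ for every $\xi \le \alpha$: below the least index carrying a nonempty extender, the fine-structural construction adjoins only empty predicates and therefore coincides stage-by-stage with the plain constructible hierarchy.

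So I would argue by contradiction. Suppose some $E_\xi$ is nonempty with $\xi \le \alpha$, let $\xi_0$ be least such, and set $\kappa := \operatorname{crit}(E_{\xi_0})$. By the coherence conventions for an extender sequence, $\kappa < \xi_0 \le \alpha$, and $\kappa$ is inaccessible in $J^{\vec{E}}_{\xi_0}$; moreover, since no extender is indexed strictly below $\xi_0$, we have $J^{\vec{E}}_{\xi_0} = J_{\xi_0} = L_{\xi_0}$. As $L_{\xi_0} \models V = L$ and $\kappa$ is inaccessible there, $L_\kappa = (V_\kappa)^{L_{\xi_0}}$ and standardly $L_\kappa \models \mathsf{ZFC}$, so in particular $L_\kappa \models \mathsf{ZFC}^-$.

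Now I would run a reflection argument to contradict $\kappa < \alpha$. Since $L_\kappa \models \mathsf{ZFC}^-$ and $\mathsf{ZFC}^- \vdash \exists x\,\phi(x)$ together with uniqueness, there is a unique ordinal $\alpha^\ast < \kappa$ with $L_\kappa \models \phi(\alpha^\ast)$. As $\phi$ is $\Sigma_1$ and $L_\kappa$ is transitive, $\Sigma_1$-upward absoluteness yields $\phi(\alpha^\ast)$ in $V$; the $\Pi_1$ companion $\psi$ and the $\mathsf{ZFC}^-$-provable equivalence $\phi \leftrightarrow \psi$ are what guarantee that $\phi$ genuinely computes one and the same ordinal across transitive models of $\mathsf{ZFC}^-$, so $\alpha^\ast$ must be the object $\phi$ defines in $V$, namely $\alpha$. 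Thus $\alpha = \alpha^\ast < \kappa < \alpha$, a contradiction. Hence no extender is indexed at or below $\alpha$, and therefore $J^{\vec{E}}_\alpha = J_\alpha = L_\alpha$.

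I expect the main obstacle to be the precise justification that $\kappa$ is inaccessible in $J^{\vec{E}}_{\xi_0}$ and that $J^{\vec{E}}_{\xi_0} = L_{\xi_0}$ below the first extender: these are standard in inner-model theory but depend on the exact indexing and coherence conventions adopted for $\vec{E}$, and one must confirm they are available at the weak base-theory level in play rather than only in full $\mathsf{ZFC}$. A secondary point to handle carefully is that the hypotheses genuinely pin down $\alpha$ as the \emph{unique} $\phi$-witness in $V$, since the reflection step collapses $\alpha^\ast$ to $\alpha$ only through that uniqueness.
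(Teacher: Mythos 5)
Your proposal is correct and follows essentially the same route as the paper: both identify the first level carrying a nonempty extender, observe that the hierarchies $J^{\vec{E}}$ and $J$ agree below it, use that its critical point $\kappa$ gives $J_\kappa\models\ZFC^-$ so that $\phi$ defines an ordinal inside $J_\kappa$, and then use $\Delta_1$-absoluteness (upward persistence of $\phi$ plus the $\Pi_1$ companion $\psi$) to conclude that this ordinal is $\alpha$ itself, forcing $\alpha<\kappa$. The only differences are cosmetic: you phrase the argument as a contradiction rather than directly, and you route through inaccessibility of $\kappa$ where the paper only invokes that $\kappa$ is a cardinal of the model.
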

\begin{proof}
    The inner model $J^{\vec{E}}_\alpha$ agrees with $J_\alpha$ below the first level $\lambda$ such that $J^{\vec{E}}_\lambda$ recognizes that the $\lambda$th extender $\vec{E}(\lambda)$ is not empty. Let $\kappa$ be its critical point.
    Then $J^{\vec{E}}_\lambda$ recognizes that $\kappa$ is a cardinal, so we get $J^{\vec{E}}_\kappa\models\ZFC^-$, whence $J^{\vec{E}}_\kappa\models \exists x \varphi(x)$. But $\varphi$ is $\Delta_1$ over $J^{\vec{E}}_\kappa=J_\kappa$.
    By upward absoluteness, $L[\vec{E}]\models \exists x \varphi(x)$, and since $\varphi$ is $\Delta_1$, $\varphi$ defines the same ordinal over $L$ as it does over $L[\vec{E}]$. Hence $\alpha<\kappa<\lambda$, and so $J_\alpha=J^{\vec{E}}_\alpha$. $J_\alpha=L_\alpha$ follows from the admissibility of $\alpha$.
\end{proof}

Accordingly, we can replace $L$ in our work from the previous sections with other fine-structural inner models, as long as we work with sufficiently small admissible ordinals. Thus, the well-foundedness and linearity phenomena generalize to theories that are compatible with other fine-structural inner models.

Here is a strong consequence of the above reasoning: All extensions of $\KP_1 + \Exists(\omega_1^{\mathsf{CK}})$ that are sound to some fine-structural inner model are linearly comparable with respect to $\Sigma_1^{\omega_1^{\mathsf{CK}}}$-reflection strength modulo $\Pi_1^{\omega_1^{\mathsf{CK}}}$-provability. Notice that this reasoning invokes the notion of $\Pi_1$ truth of formulas with respect to $L_{\omega_1^{\mathsf{CK}}}$, but not with respect to larger structures. Accordingly, one need only concede the determinacy of statements about $L_{\omega_1^{\mathsf{CK}}}$ to recognize the linearity phenomenon, even as it applies to large cardinals and axioms of higher set theory.

Our results may be adapted to prove a linear comparison result for higher classes of formulas. In an early draft of this paper, we conjectured that the linear comparison phenomenon generalizes to $\Pi^1_3$ for sufficiently strong theories. In private communication, Aguilera has informed us that this is indeed borne out. In particular, assuming $\mathsf{ZFC}$ + $\mathbf{\Delta}^1_2$-determinacy, the recursively axiomatized extensions of $\mathsf{ZFC}$ + $\mathbf{\Delta}^1_2$-determinacy are pre-linearly ordered by the relation $\subseteq^{\Sigma^1_3}_{\Pi^1_3}$. To indicate why we conjectured this fact in an early draft of our paper, let us recall the following well-known fact in inner model theory:
\begin{theorem}[Steel \cite{Steel1995ProjectivelyWOIM} Theorem 4.12]\label{steel} \pushQED{\qed}
    Let $M_n$ be the least inner model for $n$ Woodin cardinals, and let $\delta^1_n$ be the supremum of all $\Delta^1_n$-definable well-orders over a subset of $\omega$.
    Suppose that $M_n^\sharp$ exists, then
    \begin{enumerate}
        \item If $n$ is odd, then every $\Pi^1_{n+2}$-formula is equivalent to a $\Sigma_1^{M_n|\delta^1_{n+2}}$-formula.
        \item If $n$ is even, then every $\Sigma^1_{n+2}$-formula is equivalent to a $\Sigma_1^{M_n|\delta^1_{n+2}}$-formula. \qedhere 
    \end{enumerate}
\end{theorem}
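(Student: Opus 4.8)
The plan is to reduce the statement to two established pillars of the theory of the models $M_n$: the scale analysis of the projective hierarchy, and the correctness of $M_n$. The parity hypothesis is exactly what aligns these. Writing $\Gamma$ for $\Pi^1_{n+2}$ when $n$ is odd and for $\Sigma^1_{n+2}$ when $n$ is even, in both cases $\Gamma$ is the level of the projective hierarchy carrying the scale property, since under the Moschovakis periodicity pattern scales sit on $\Pi^1_1,\Sigma^1_2,\Pi^1_3,\Sigma^1_4,\dots$, i.e. on $\Pi^1_{2k+1}$ and $\Sigma^1_{2k+2}$. First I would record that the hypothesis ``$M_n^\sharp$ exists'' supplies exactly the $\Pi^1_{n+1}$-determinacy (Neeman--Woodin) needed to run the First and Second Periodicity Theorems through level $n+2$, so that $\Gamma$ has the scale property and the $\Gamma$-sets admit Suslin representations whose norms are bounded below the relevant projective ordinal.

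Next I would produce the Suslin representation \emph{inside} $M_n$. Using a scale on a universal $\Gamma$-set, one builds a tree $T$ whose nodes are coded by elements of $M_n|\delta^1_{n+2}$ and whose tree relation is $\Sigma_1$-definable over this initial segment; the fine structure of $M_n$ enters here, since one must verify that the norms comprising the scale are computed by $M_n$ below $\delta^1_{n+2}$. A real $x$ then lies in the $\Gamma$-set iff the section $T_x$ is illfounded, i.e. iff $T_x$ has a branch, and the existence of such a branch is a $\Sigma_1$ assertion over $M_n|\delta^1_{n+2}$ with parameter $x$. This yields the direction that every $\Gamma$-formula is equivalent to a $\Sigma_1^{M_n|\delta^1_{n+2}}$-formula.

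For the converse direction I would invoke the correctness of $M_n$: granted the iterability furnished by $M_n^\sharp$, the model $M_n$ is $\Gamma$-correct, and the truth of a $\Sigma_1$ statement over $M_n|\delta^1_{n+2}$ can be unfolded---via comparison and genericity/absoluteness arguments---into an assertion about the real parameter of complexity at most $\Gamma$. The two inclusions together give the claimed identity of pointclasses, and the argument is uniform in $x$, yielding the stated formula-by-formula equivalence.

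The principal obstacle is twofold. First, one must pin down that the ordinal cutting off the relevant initial segment of $M_n$ is \emph{exactly} $\delta^1_{n+2}$; this identification rests on the Kechris--Martin and Kunen--Martin analysis of the projective ordinals together with a fine-structural computation of where scales are constructed in $M_n$, and it is genuinely delicate in the even case, where the naive Suslin cardinal of $\Sigma^1_{n+2}$ lies strictly below $\delta^1_{n+2}$. Second, and more seriously, the $\Gamma$-correctness of $M_n$ is itself the deep content: it rests on full iterability (the Core Model Iterability Problem), on comparison, and on the tree-production arguments of Steel and Woodin. Thus the genuinely hard part is not the packaging into $\Sigma_1$-over-$M_n$ form---which is scale-theoretic bookkeeping once the tree is in place---but the correctness theorem and the exact determination of $\delta^1_{n+2}$.
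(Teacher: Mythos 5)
There is nothing in the paper to compare your argument against: Theorem \ref{steel} is stated as an imported black box, cited to Steel's \emph{Projectively well-ordered inner models}, and the \qedhere\ immediately after the statement signals that no proof is given. So the only question is whether your sketch is a faithful reconstruction of the standard argument, and in broad outline it is. The parity bookkeeping is right: the scale property sits on $\Pi^1_{2k+1}$ and $\Sigma^1_{2k+2}$, the hypothesis that $M_n^\sharp$ exists (relativized to all reals) yields $\mathbf{\Pi}^1_{n+1}$-determinacy by Neeman--Woodin, and this is enough determinacy to push periodicity through level $n+2$; the forward direction via a scale tree whose sections' illfoundedness is a $\Sigma_1$ fact over $M_n|\delta^1_{n+2}$ is the standard mechanism.

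The one place where your sketch misattributes the load-bearing input is the converse direction. To show that a $\Sigma_1$ assertion over $M_n|\delta^1_{n+2}$ is expressible at level $\Gamma$ in $V$, what you need is not ``$\Gamma$-correctness of $M_n$'' but a $\Gamma$-level \emph{definition} of the structure $M_n|\delta^1_{n+2}$ and of satisfaction in it --- in practice, the characterization of its elements via the mouse order together with the fact that $\omega_1$-iterability for $n$-small premice is definable at complexity roughly $\Pi^1_{n+1}$. Correctness tells you that $M_n$ reflects $\Gamma$ truth; definability of the mouse hierarchy is what lets you pull $\Sigma_1$-over-$M_n|\delta^1_{n+2}$ statements back into the projective hierarchy, and it is a genuinely separate (if cognate) piece of the machinery. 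Together with the exact computation of $\delta^1_{n+2}$ inside $M_n$, which you correctly flag as delicate, these are the real content of Steel's Theorem 4.12; as written, your proposal is a roadmap that defers precisely these steps to the literature, which is defensible here only because the paper itself does the same.
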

Here $M_n|\alpha = (L_\alpha[\vec{E}],\vec{E}|\alpha)$ when $M_n=L[\vec{E}]$. For the case $n=1$, the presence of $M_1^\sharp$ implies that every $\Pi^1_3$-sentence is equivalent to a $\Sigma_1^{M_1|\delta^1_3}$-formula.
We conjecture that $\mathsf{ZFC}$ + $\mathbf{\Delta}^1_2$-determinacy proves that the following are equivalent for for ``sufficiently strong'' theories $T,U$ that prove the existence of $M_1^\sharp$:
\begin{enumerate}
    \item Either $T\subseteq_{\Pi^1_3}^{\Sigma^1_3}U$ or $U\subseteq_{\Pi^1_3}^{\Sigma^1_3}T$.
    \item Either $T\subseteq_{\Sigma_1^{M_1|\delta^1_3}}^{\Pi_1^{M_1|\delta^1_3}}U$ or $U\subseteq_{\Sigma_1^{M_1|\delta^1_3}}^{\Pi_1^{M_1|\delta^1_3}}T$.
\end{enumerate} 
This equivalence should follow from \autoref{steel}. Extensions of our methods should secure the second statement of the equivalence, thereby securing the first.

As a side note, the reader may wonder if the linear comparison phenomenon holds for $\Pi^1_{n+1}$-consequences or $\Sigma^1_n$-consequences for $n\ge 1$. The following result by Aguilera and Pakhomov provides a negative answer:
\begin{theorem}[\cite{AguileraPakhomov??Nonlinearity}] \pushQED{\qed}
    Working over $\ZFC$ with $\omega$ many Woodin cardinals, let $\Gamma$ be either one of $\Sigma^1_1$, $\Pi^1_2$, $\Sigma^1_3$, $\Pi^1_4$, $\cdots$, and let $\widehat{\Gamma}$ be the set of all negations of formulas in $\Gamma$.

    Suppose that $T$ is a sound recursively axiomatizable extension of $\mathsf{ACA}_0$. Then we can find true $\Gamma$-sentences $\phi_0$ and $\phi_1$ such that $T\nvdash^{\widehat{\Gamma}} \phi_0\to\phi_1$ and $T\nvdash^{\widehat{\Gamma}} \phi_1\to\phi_0$.\qedhere 
\end{theorem}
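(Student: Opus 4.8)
The plan is to treat this as an \emph{incomparability} result and to construct the two sentences by self-reference. The first step is a complexity computation: I would show that, for each $\Gamma$ in the list, the oracle-provability predicate $\Pr^{\widehat{\Gamma}}_T(\sigma)$, which unwinds to $\exists n\,[\mathrm{True}_{\widehat{\Gamma}}(\psi_n)\wedge \mathrm{Proof}_T(\psi_n\to\sigma)]$, is itself $\widehat{\Gamma}$-definable uniformly in the code of $\sigma$. This uses only that $\widehat{\Gamma}$ carries a universal $\widehat{\Gamma}$-truth predicate (parametrization) and that $\widehat{\Gamma}$ is closed under number quantification and under conjunction with the $\Sigma^0_1$ proof-search relation; no determinacy is needed at this stage. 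Consequently $\lnot\Pr^{\widehat{\Gamma}}_T(\sigma)$ is $\Gamma$, which licenses writing genuine $\Gamma$-sentences asserting oracle-non-provability.

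Next I would apply a simultaneous fixed point, i.e. the two-variable diagonal lemma (\autoref{Lemma: Diagonal Lemma for two variables}), to produce $\Gamma$-sentences
$$\phi_0 \leftrightarrow \lnot\Pr^{\widehat{\Gamma}}_T(\ulcorner\phi_1\to\phi_0\urcorner),\qquad \phi_1\leftrightarrow \lnot\Pr^{\widehat{\Gamma}}_T(\ulcorner\phi_0\to\phi_1\urcorner),$$
so that by construction $\phi_0$ is true exactly when $T\nvdash^{\widehat{\Gamma}}\phi_1\to\phi_0$, and $\phi_1$ is true exactly when $T\nvdash^{\widehat{\Gamma}}\phi_0\to\phi_1$. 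Thus the entire conclusion reduces to the single claim that \emph{both} $\phi_0$ and $\phi_1$ are true. A convenient observation simplifies the bookkeeping: since $T$ is sound and the oracle supplies only true $\widehat{\Gamma}$-sentences, every theory $T+\psi$ used in an oracle-proof is sound, so an oracle-proof of $\phi_1\to\phi_0$ forces $\phi_1\to\phi_0$ to be true; chasing this through the fixed-point equivalences shows that $\phi_0$ is true if and only if $\phi_1$ is true. Hence only two configurations survive: both true (which is exactly what we want) or both false.

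The crux, and the main obstacle, is ruling out the degenerate ``both false'' configuration. If $\phi_0$ and $\phi_1$ are both false then both implications are oracle-provable, which is perfectly consistent with soundness because a false-to-false implication is true; the naive \emph{symmetric} construction above therefore does not by itself force truth of either sentence. Breaking this symmetry is where the real work lies. A Rosser-style priority, making $\phi_0$ assert that every oracle-proof of $\phi_1\to\phi_0$ is preceded by one of $\phi_0\to\phi_1$ and dually for $\phi_1$, would defeat the degenerate case, but a direct complexity count shows such a sentence escapes $\Gamma$ (for $\Gamma=\Sigma^1_1$ it lands around $\Pi^1_2$), so it is inadmissible as a $\Gamma$-witness. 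The resolution should instead exploit the feature that singles out exactly the classes $\Sigma^1_1,\Pi^1_2,\Sigma^1_3,\dots$: under $\omega$ Woodin cardinals these are precisely the projective classes that \emph{lack} the reduction (prewellordering/norm) property, whereas their duals $\widehat{\Gamma}=\Pi^1_1,\Sigma^1_2,\dots$ possess it, and this is the same norm structure that drives the \emph{linearity} theorems of the preceding sections via comparison of representable ordinals. I would therefore convert a witness to the failure of reduction for $\Gamma$ into the required symmetry-breaking, encoding two $\Gamma$-sets that cannot be reduced into the pair $\phi_0,\phi_1$ so that the ``both false'' configuration becomes contradictory while the sentences stay within $\Gamma$. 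Carrying out this encoding, keeping complexity inside $\Gamma$ while extracting a genuine contradiction from mutual oracle-provability, is the delicate heart of the argument and is presumably where the determinacy hypothesis is essential.
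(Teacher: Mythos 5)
A point of orientation first: the paper does not prove this theorem. It is quoted from Aguilera and Pakhomov with the end-of-proof marker attached directly to the statement, exactly as with the paper's other imported results, so there is no in-paper argument to compare yours against; your proposal has to stand or fall on its own.

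On its own terms it has a genuine gap, and you have correctly located it but not closed it. The preliminary steps are fine: each $\widehat{\Gamma}$ in the list is parametrized and closed under number quantification and intersection with recursive relations, so $\Pr^{\widehat{\Gamma}}_T$ is $\widehat{\Gamma}$, its negation is $\Gamma$, the simultaneous fixed point yields genuine $\Gamma$-sentences, and your reduction of the whole theorem to ``$\phi_0$ and $\phi_1$ are not both false'' is correct. But the symmetric construction cannot, even in principle, exclude the both-false configuration, for a structural reason you gesture at without making explicit: if $\phi_i$ is false, then $\lnot\phi_i$ is itself a \emph{true $\widehat{\Gamma}$-sentence}, hence an admissible oracle, and $T+\lnot\phi_i$ trivially proves $\phi_i\to\phi_{1-i}$. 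So the both-false assignment satisfies both fixed-point equations all by itself; nothing in the construction selects the both-true solution, and soundness of $T$ gives no leverage, since an implication with a false antecedent is true. The proposed repair --- importing a failure-of-reduction witness for $\Gamma$ to ``break the symmetry'' --- is at this stage a heuristic rather than an argument: failure of reduction is a fact about pairs of $\Gamma$ \emph{sets}, and you supply no mechanism for transferring it to a pair of self-referential $\Gamma$ \emph{sentences} while keeping their complexity inside $\Gamma$ and while extracting an actual contradiction from mutual oracle-provability. Your own observation that the natural Rosser-style fix overshoots $\Gamma$ is evidence that this transfer is not routine. As written, the argument establishes only that \emph{if} the fixed points happen to be true then they witness the theorem, which is not a proof of the theorem.
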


\bibliographystyle{plain}
\bibliography{bibliography}

\end{document}